\newcommand{\R} {\mathbb R}
\newcommand{\cuad}{{\sqcap\kern-.68em\sqcup}}
\newcommand{\be}{\begin{equation}}
\newcommand{\ee}{\end{equation}}
\renewcommand{\Im}{\mathop{\rm Im}}
\newtheorem{lemma}{Lemma}[section]
\newtheorem{prop}{Proposition}[section]
\newtheorem{theorem}{Theorem}
\newtheorem{remark}{Remark}[section]
\newcommand{\bremark}{\begin{remark} \em}
\newcommand{\eremark}{\end{remark} }
\long\def\comment#1{\marginpar{\vtop{\raggedright\small$\bullet$\ #1}}}
\long\def\hide#1{}
\long\def\anot#1{\ \\{\bf \crr ANNOTATION.} {#1}}
\long\def\noanot#1{}
\definecolor{redd}{RGB}{200,0,0}
\def\crr{\color{red}}
\long\def\elim#1{{\color{red} ELIMINAR\\ #1}}
\def\crr{}
\long\def\comment#1{}
\long\def\anot#1{}
\long\def\elim#1{}
\long\def\comment#1{\marginpar{\raggedright\small$\bullet$\ #1}}
\numberwithin{equation}{section}
\title[Infinite time bubbling for the $SU(2)$ Yang-Mills heat flow on $\mathbb{R}^4$]{Infinite time bubbling for the $SU(2)$ Yang-Mills heat flow on $\mathbb{R}^4$}
\author[Y. Sire]{Yannick Sire}
\address{\noindent Department of Mathematics, Johns Hopkins University, 404 Krieger Hall, 3400 N. Charles Street, Baltimore, MD 21218, USA}
\email{ysire1@jhu.edu}
\author[J. Wei]{Juncheng Wei}
\address{\noindent Department of Mathematics, Chinese University of Hong Kong, Shatin, NT, Hong Kong}
\email{wei@math.cuhk.edu.hk}
\author[Y. Zheng]{Youquan Zheng}
\address{\noindent School of Mathematics, Tianjin University, Tianjin 300072, P. R. China}
\email{zhengyq@tju.edu.cn}
\begin{document}
\begin{abstract}
We investigate the long time behaviour of the Yang-Mills heat flow on the bundle $\mathbb{R}^4\times SU(2)$. Waldron \cite{Waldron2019}  proved global existence and smoothness of the flow on closed $4-$manifolds, leaving open the issue of the behaviour in infinite time. We exhibit two types of long-time bubbling: first we construct an initial data and a globally defined solution which {\sl blows-up} in infinite time at a given point in $\mathbb R^4$. Second, we prove the existence of {\sl bubble-tower} solutions, also in infinite time. This answers the basic dynamical properties of the heat flow of Yang-Mills connection in the critical dimension $4$ and shows in particular that in general one cannot expect that this gradient flow converges to a Yang-Mills connection. We emphasize that we do not assume for the first result any symmetry assumption; whereas the second result on the existence of the bubble-tower is in the $SO(4)$-equivariant class, but nevertheless new.
\end{abstract}
\maketitle
\tableofcontents
\section{Introduction}

It is a classical topic in differential geometry to relate and understand the interplay between the geometry of submanifolds and the theory of vector bundles. For example, in the seminal paper \cite{Tiangang2000}, Gang Tian exhibited a link between Yang-Mills connections, which are critical points of the square of the $L^2$-norm of the connection form on  a vector bundle and  {\sl calibrated minimal submanifolds}. In the present paper, motivated by recent global well-posedness results due to Waldron \cite{Waldron2019}, we investigate the long time behaviour of the Yang-Mills heat flow.

Let $E\to M$ be a vector bundle over a four dimensional Riemannian manifold without boundary, with compact Lie group $G$ as its structure group. Let $T^*M$ be the cotangent bundle  over $M$ and for $1 \leq p \leq 4 $, let $\Omega^p(M)$ be the bundle of $p-$forms on $M$ with $T^*M=\Omega^1(M)$.
 A connection $A$ on $E$ can be given by specifying a covariant derivative $D_A$ from $C^\infty(E)$ into $C^\infty (E\otimes \Omega^1(M))$. In a local trivialization of the vector bundle $E$, the covariant derivative $D_A$ writes
$$
D:=D_A=d+A_\alpha
$$
where $A=(A_\alpha)_{\alpha}$ is a section of $T^*M \otimes \mathfrak{g}$ where $\mathfrak{g}$ is the Lie algebra of $G$ embedded in a large unitary group, i.e. the  connection $A$ is a $\mathfrak{g}-$valued $1-$form.  The curvature $F_A$ of the connection $A$ is given by the tensor $D_A^2: \Omega^0(M)\to \Omega^2(M)$, which can be formally  written
$$
F_A=F:=dA+A\wedge A.
$$

For a connection $A$, the Yang-Mills functional is
$$
YM(A) = \frac{1}{2}\int_{M}|F_A|^2 dx.
$$
It is well known that the Euler-Lagrange equation of $YM$ is then
$$
D^*_AF_A=0
$$
where $D^*_A$ denotes the adjoint operator of $D_A$ with respect to the Killing form of $G$ and the metric on $M$. By the second Bianchi identity, it holds that
$$
D_AF_A=0.
$$
 A connection $A$ is Yang-Mills if and only if it is a critical point of $YM$, which then is equivalent to the equation
$$
D^*_AF_A = 0.
$$
In order to obtain Yang-Mills connections on any given bundle $E$, a natural approach is to deform a given connection along the negative gradient flow of $YM$ which is  given by the following evolution equation
\begin{equation}\label{e:Yangmillsheatflow}
\frac{\partial A}{\partial t} = -D_A^*F_A,
\end{equation}
starting from any initial connection $A_0$. This equation plays a fundamental role in Donaldson's work (see e.g. \cite{DonaldsonKronheimer}).

A gauge transformation is a (sufficiently smooth) map $S$ from $M$ into $G$. The gauge group acts on connections as
$$
S(A):=S\cdot  A\cdot S^{-1}-dS \cdot S^{-1}
$$
YM is gauge-invariant in the sense that $YM(S^*(D)) = YM(D)$ for any gauge transformation and any connection $D = d + A$. The Yang-Mills equation is therefore not elliptic, as the kernel of the linearized operator is infinite dimensionnal. Similarly, the evolution problem (\ref{e:Yangmillsheatflow}) is not parabolic, and the methods developed for parabolic equations cannot be directly applied to prove existence and uniqueness of solutions to (\ref{e:Yangmillsheatflow}). For further background material on Yang-Mills equations, we refer the interested readers to, for instance, \cite{DonaldsonKronheimer}, \cite{FreedUhlenbeck1991}, \cite{Jost1991},  \cite{Lawson1985}.

In the seminal work of Taubes \cite{Taubesjdg1982}, the Morse theory for Yang-Mills functional was established. In \cite{wanghongyujdg}, nonminimal solutions to the Yang-Mills equation with group $SU(2)$ on $\mathbb{S}^2\times\mathbb{S}^2$ and $\mathbb{S}^1\times \mathbb{S}^3$ are constructed.
In \cite{Tiangang2000}, Gang Tian was interested in a compactification of the moduli space of Yang-Mills connections, pursuing the search of geometric invariants. To do so, one needs to consider singular Yang-Mills connections, i.e. singular solutions of the PDE $D_AF_A=0$ on $M$. In four dimensions, it is known since the important work of Uhlenbeck \cite{Uh1,Uh2} that Yang-Mills connections are smooth up to a discrete set of points on $M$ and that those connections can be extended to the whole manifold, with a smaller $L^2$ norm of the curvature form. In higher dimensions, the picture is more complicated and Tian  \cite{Tiangang2000} proved that the blow-up set of Yang-Mills connections is closed and $H^{n-4}$ rectifiable where $H^m$ is the $m-$Hausdorff measure. Thanks to the monotonicity of the rescaled energy, one has the following bubbling phenomenon: given any sequence $A_i$ of Yang-Mills connections, $A_i$ converges up to a subsequence and modulo a gauge transformation to a Yang-Mills connection $A_\infty$ in the smooth topology outside of a closed set of codimension at least $4$. Furthermore, the energy concentrates in the sense of measures:
$$
|F_{A_k0}|^2dvol\rightharpoonup |F_{A_\infty}|^2dvol + \Theta dH^{n-4}|_{S}.
$$
The limiting connection $A_\infty$ is smooth on $M\setminus S$, $\Theta \geq 0$ is called the multiplicity and the set $S$ is the blow-up locus of $A_i$. The achievement of Tian is a deep understanding of the blow-up locus and hence of the natural compactification of the Yang-Mills connections in higher dimensions. He showed that the blow-up locus is $(n-4)$-rectifiable and if it arises as a special subclass of connections, then it is a closed calibrated integral minimizing current, namely the generalized mean curvature of $S$ is equal to 0, see also \cite{TaoTian2004}. We refers also the reader to the more recent work by Naber and Valtorta \cite{NaberValtorta2019}.

\vspace{0.5cm}

{\bf Long time behavior of the Yang-Mills heat flow}.
As far as the flow \eqref{e:Yangmillsheatflow} is concerned, the theory is much less developed than its elliptic version. In \cite{RadeJRAM1992}, the global existence and uniqueness of Yang-Mills flow over 2 or 3 dimensional manifolds were proved. In spatial dimensions greater than 4, finite time blow-up solutions were constructed in \cite{Naito1994}. The behaviour of the Yang-Mills flow on Riemannian manifolds of dimension four was not very well understood until recently. The foundational work of Struwe \cite{Struwe1994} gives a global weak solution with finitely many point singularities, in analogy with the harmonic map flow in dimension two. In \cite{SchlatterAGAG}, Schlatter gave the exact formulation, the proofs of the blow-up analysis and the long-time behaviour
of the Yang-Mills flow in Theorem 2.4 of \cite{Struwe1994}.  In \cite{SchlatterJRAM}, Schlatter also  proved the global existence of four dimensional Yang-Mills heat flow for small data. Recently, the global well-posedness for any initial data was established by Alex Waldron in \cite{Waldron2019} (see also \cite{Waldron2014, Waldron2016}). The asymptotic behaviour and the structure of the singular set for the Yang-Mills heat flow in dimensions $\geq 4$ was analyzed in \cite{HongTianMathAnn}.

It was already pointed out in \cite{GrotowskiShatah2007} that the Yang-Mills heat flow on $4-$manifolds behaves similarly as the degree 2 harmonic map heat flow. In \cite{SchlatterStruwe1998}, Schlatter and Struwe showed that the Yang-Mills heat flow of $SO(4)$-equivariant connections on a $SU(2)$-bundle over a ball in $\mathbb{R}^4$ admits a smooth solution for all times using the super/sub solution method for two dimensional harmonic map flow developed in \cite{ChangDingYe1992}. The infinite time bubbling under radially symmetric assumptions was proved in Chapter 4 of \cite{Waldron2014}.
\vspace{0.3cm}

In this paper, we prove the existence of infinite time blow-up solutions on the trivial bundle $\mathbb{R}^4\times SU(2)$ {\sl without symmetry assumptions}. Even more, we also prove the existence of bubble-tower solutions as $t\to +\infty$. To state our result, let us recall the well known BPST/ADHM instantons.
We identify the field of quaternions $\mathbb H$
$$
x = x_1 + x_2i + x_3 j + x_4k\in \mathbb H
$$
with elements of $\mathbb{R}^4$. Then the following algebraic properties hold:
$$
i^2 = j^2 = k^2 = -1,\quad ij = k = -ji,\quad jk = i = -kj,\quad ki = j = -ik
$$
and
$$
\bar{x} = x_1 -x_2 i - x_3 j - x_4 k, \quad |x|^2 = x_1^2 + x_2^2 + x_3^2 + x_4^2 = x\cdot\bar{x}, \quad Im x = x_2i + x_3 j + x_4k.
$$
It is well known that there is an isomorphism between the Lie algebra $su(2)$ of the structure group $SU(2)$ and $Im\, \mathbb H$. Considering $B(x) = Im(f(x, \bar{x})d\bar{x})$, it was pointed out by Polyakov that, when $f(x, \bar{x}) = \frac{x}{1+|x|^2}$, then $B$ is nontrivial self-dual instanton (a solution of the Yang-Mills equations) on the bundle $E = \mathbb{R}^4\times SU(2)$. In this case, one has
$$
B(x) = Im\left(\frac{x}{1+|x|^2}d\bar{x}\right), \quad F_B = \frac{dx\wedge d\bar{x}}{(1+|x|^2)^2}.
$$
See the references  \cite{AHDM} and \cite{BPST}.

Our first result is the infinite time bubbling at one point for the flow (\ref{e:Yangmillsheatflow}):
\begin{theorem}\label{t:main}
Let $q$ be a point in $\mathbb{R}^4$. There exist an initial datum $A_0(x)$, $A_0\in H^1(\mathbb{R}^4)\cap C(\mathbb{R}^4)$ and smooth functions $\xi(t)\to q$, $0<\mu(t)\to 0$, as $t\to +\infty$, such that the solution $A(x, t)$ to (\ref{e:Yangmillsheatflow}) has the following form modulo a gauge transformation,
\begin{equation}\label{solutionofmainproblem1}
A(x, t) = Im\left(\frac{x-q}{1+|x-q|^2}d\bar{x}\right)+Im\left(\eta_0\left(\frac{|x-q|}{\sqrt{\mu(t)}}\right)\frac{x-\xi(t)}{\mu(t)^2+|x-\xi(t)|^2}d\bar{x}\right) +\varphi(x, t),
\end{equation}
where $\eta_0(s)$ is a smooth cut-off function satisfying $\eta_0(s) = 1$ for $s < 1$ and $\eta_0(s) = 0$ for $s > 2$. As $t\to +\infty$, the differential 1-forms $\varphi(x, t)\to 0$ uniformly away from the blow-up point $q$. Moreover, the parameter $\mu(t)$ decays to $0$ exponentially.
\end{theorem}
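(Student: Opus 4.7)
The plan is to implement a parabolic inner--outer gluing scheme, in the spirit of the constructions developed for the harmonic map heat flow by D\'avila--del Pino--Wei and for related critical Schr\"odinger and wave problems, adapted to the gauge-theoretic setting. Since \eqref{e:Yangmillsheatflow} is not strictly parabolic, the first step is to break gauge invariance via a DeTurck/Coulomb-type modification relative to the moving background $U[\mu,\xi](x):=A_*(x)+B_{\mu,\xi}(x)$, where $B_{\mu,\xi}(x)=\mathrm{Im}\!\left(\tfrac{x-\xi}{\mu^2+|x-\xi|^2}d\bar x\right)$; any solution of the modified system can be gauged back to a genuine Yang--Mills heat flow, which is why the conclusion of the theorem is stated ``modulo a gauge transformation.'' Writing $A=U[\mu(t),\xi(t)]+\varphi$, one computes the initial error $S[\mu,\xi]:=\partial_tU+D_U^*F_U$; because $A_*$ and $B_{\mu,\xi}$ are individually anti-self-dual, $S$ carries no ``diagonal'' piece, and its leading contributions are $\dot\mu\,\partial_\mu B$, $\dot\xi\cdot\nabla_\xi B$, together with the cross curvature $A_*\wedge B_{\mu,\xi}+B_{\mu,\xi}\wedge A_*$. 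The opposite sign in $A_*=-B_{1,q}$ makes the cross term small and sign-definite, which is what ultimately drives concentration rather than dispersion.

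Next one decomposes $\varphi=\eta\,\psi^{in}+\varphi^{out}$, with $\eta$ a smooth cutoff supported in a fixed-radius ball around $\xi(t)$. The inner perturbation $\psi^{in}$ is expressed in blowup variables $y=(x-\xi)/\mu$, $\tau=\int_0^t\mu^{-2}(s)\,ds$, and satisfies, to leading order, $\partial_\tau\psi^{in}+\mathcal L_B\psi^{in}=H^{in}[\varphi^{out},\mu,\xi]$, where $\mathcal L_B$ is the Yang--Mills Jacobi operator at the BPST instanton. The outer perturbation $\varphi^{out}$ satisfies, at leading order, a linear heat equation on $\mathbb R^4$ coupled to $\psi^{in}$ through the transition region. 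Under the Coulomb-type gauge condition imposed on $\psi^{in}$, the operator $\mathcal L_B$ is elliptic and its $L^2$-kernel is finite-dimensional, generated by the infinitesimal translations $\partial_{\xi_j}B$, the infinitesimal dilation $\partial_\mu B$, and the instanton's internal gauge-rotation modes; the linear theory is then furnished by weighted $L^\infty$ estimates in $y$ for the inner problem and pointwise heat kernel bounds on $\mathbb R^4$ for the outer one.

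The requirement that $H^{in}$ be orthogonal to $\ker\mathcal L_B$ in each time slice produces a system of ODEs for $(\mu(t),\xi(t))$. The translation modes give $\dot\xi=o(1)$, so $\xi(t)\to q$; the dilation mode, sourced to leading order by $\langle A_*\wedge B+B\wedge A_*,\partial_\mu B\rangle$, reduces by an explicit computation to a scalar equation of the form $\dot\mu=-\lambda\,\mu+\text{lower order}$, with $\lambda>0$ determined by the value of $A_*$ at $q$. Integrating yields the claimed exponential decay $\mu(t)\sim\mu_0 e^{-\lambda t}$. Once the ODE is solved, a contraction mapping in suitable weighted spaces closes the scheme and produces $\varphi$ with $\varphi(x,t)\to 0$ uniformly on $\{|x-q|\geq\delta\}$ for every $\delta>0$; the initial datum $A_0$ is then simply the resulting $A(\cdot,t_0)$ at any fixed large time $t_0$.

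The main obstacle is the interplay between gauge invariance and modulation: $\mathcal L_B$ has a genuinely finite-dimensional geometric kernel only after fixing a Coulomb-type gauge on $\psi^{in}$, and this inner gauge must be reconciled with the global DeTurck gauge used to define $\varphi^{out}$ across the matching annulus, all without corrupting the modulation equations derived under those gauge conditions. A secondary difficulty is the slow $O(|x|^{-1})$ decay of the instanton gauge potential at infinity, which forces the outer linear theory into weighted function spaces; here the cancellation built into the choice $A_*=-B_{1,q}$ is essential, as it is what allows the global ansatz $A_*+B_{\mu,\xi}$ to lie in $H^1(\mathbb R^4)\cap C(\mathbb R^4)$ and thereby lets the argument close in the regularity class stated in the theorem.
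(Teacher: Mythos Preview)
Your overall architecture is correct and matches the paper: DeTurck-type gauge fixing, the ansatz $A_*+B_{\mu,\xi}$, inner--outer gluing with the inner equation written in blow-up variables against the Jacobi operator $\mathcal L_B$, and modulation equations coming from orthogonality to the finite-dimensional kernel. However, there is a substantive gap in the middle of your argument that would prevent it from closing as written.

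The issue is the decay of the inner error. You pass directly from the raw error $S[\mu,\xi]$ to the inner linear problem, but the dominant pieces of $S$, namely $\dot\mu\,\partial_\mu B_{\mu,\xi}$ and the gauge-rotation analogues, decay only like $|y|^{-3}$ as $|y|\to\infty$. The inner linear theory (the paper's Proposition~\ref{proposition5.1}) requires the source to lie in a weighted space with decay $|y|^{-3-\alpha}$ for some $\alpha>0$; at the critical rate $|y|^{-3}$ the a~priori estimate fails. The paper repairs this by inserting an explicit nonlocal correction $\Phi_0+\Phi_1$ into the ansatz: these are built from Duhamel solutions of auxiliary radial heat equations in $\mathbb R^6$ (equations \eqref{heatequationdimension4}--\eqref{heatequationdimension40}) and are designed to cancel the $|y|^{-3}$ tail of $S$ at main order. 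Without this step, the inner problem is not solvable in the required weighted class, and the gluing cannot proceed.

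A consequence you also miss is that the modulation equation for $\mu$ is \emph{not} the local ODE $\dot\mu=-\lambda\mu$ you derive from the cross term alone. After the nonlocal correction, the orthogonality condition against the dilation mode $Z^0$ becomes the genuinely nonlocal-in-time relation
\[
\frac{144}{\mu(t)}\int_{t_0}^{t}\frac{\mu(s)\dot\mu(s)}{(t-s)^2}\,\Omega\!\left(\frac{\mu(s)^2}{t-s}\right)ds \;-\; \frac{24\pi^2}{\mu(t)} \;\approx\; 0,
\]
and the exponential rate $\mu_0(t)=e^{-\kappa_0 t}$ with $\kappa_0=-\pi^2/(6\Xi)$ emerges only after an asymptotic analysis of this integral operator, not from a pointwise balance. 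Two smaller points: the paper also carries gauge-rotation parameters $\theta_{\rho\sigma}(t)$ in the ansatz (your ``internal gauge-rotation modes'' must be modulated, not just noted as kernel elements), and the fixed-point argument is closed by Schauder rather than contraction, which matters because the dependence of the various pieces on $(\lambda,\xi,\theta,\phi,\psi)$ is not obviously Lipschitz with small constant.
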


In his thesis \cite{Waldron2014},  Waldron proved the existence of infinite time blowing-up solutions for (\ref{e:Yangmillsheatflow}) in the $SO(4)$-equivariant case. The method of \cite{Waldron2014} is based on the scheme of Raphael and Schweyer \cite{RaphaelSchweyercpam}. The proof of Theorem \ref{t:main} (and Theorem \ref{t:main-bubble-tower}  below) is based on the inner-outer parabolic gluing method developed in \cite{delPinoMussoJEMS} and \cite{DDW2020}; we do not need the $SO(4)$-equivariant assumption in Theorem \ref{t:main} and this is a main achievement of our paper . Furthermore, we would like to emphasize that it was believed that the Yang-Mills flow would be generally converging at $+\infty$ towards a Yang-Mills connection. These constructions show that this is generally not the case. Theorem \ref{t:main} is also valid in the multiple bubble case after minor modifications of the proof.

The heat flow is not the only relevant time-dependent equations for Yang-Mills connections. In a series of important papers, Oh and Tataru considered the energy-critical hyperbolic Yang-Mills flow where the heat operator is replaced by the wave one. They provide a complete picture of global-wellposedness {\sl vs} finite-time blow-up (the so-called Threshold conjecture). See \cite{OhTataruBAMS}, \cite{OhTatarupartI}, \cite{OhTatarupartII}, \cite{OhTataruCMP}, \cite{OhTatarupartIV} and references therein. Note that the solution in (\ref{solutionofmainproblem1}) has the same form as the one in Theorem 6.1 of \cite{OhTatarupartI}, Theorem 1.3 of \cite{SchlatterJRAM} and  Theorem 1.2 of \cite{SchlatterAGAG}.

More precisely, there exist sequences $R_k\searrow 0$, $x_k\to q$, $t_k\nearrow + \infty$, such that the solutions have the following asymptotic form
$$
A_k(x) = d + R_kA(x_k+R_kx, t_k)\to A_\infty, \quad k\to \infty,
$$
modulo a gauge transformation in $H^{1, 2}_{loc}$, where $A_\infty$ is a Yang-Mills connection on $\mathbb{R}^4$.

\vspace{0.5cm}

{\bf The bubble tower solutions of Yang-Mills heat flow.}
We also construct a completely new solution in large times for the flow (\ref{e:Yangmillsheatflow}). Now we restrict ourselves to $SO(4)$-equivariant solutions of (\ref{e:Yangmillsheatflow}), which means that we assume that the connection $A$ takes the form
$$A(x, t) = Im(\frac{x}{2r^2}\psi(r, t)d\bar{x})$$ $r = |x|$ ( see e.g. \cite{GrotowskiShatah2007} and \cite{SchlatterStruwe1998}).
In this case the equation (\ref{e:Yangmillsheatflow}) reduces to
\begin{equation}\label{bubbletowerIntro123}
\frac{\partial }{\partial t}\psi = \psi_{rr} + \frac{1}{r}\psi_r -\frac{2}{r^2}(\psi-1)(\psi-2)\psi.
\end{equation}
Then we prove the following
\begin{theorem}\label{t:main-bubble-tower}
(1) There exists a solution of (\ref{bubbletowerIntro123}) having the following form $$
\psi(r, t) = \eta_0\left(\frac{r}{\sqrt{\mu_2(t)}}\right)\frac{2r^2}{\mu_2(t)^2+r^2} + \psi_1(r, t)+\varphi_2(r, t).
$$
Here $\psi_1(r, t)$ is the  (one-)bubble solution of (\ref{bubbletowerIntro123}) constructed in Theorem \ref{t:main} with form
$$ \psi_1(r, t) = \frac{2r^2}{1+r^2}+\eta_0\left(\frac{r}{\sqrt{\mu_1(t)}}\right)\frac{2r^2}{\mu_1(t)^2+r^2}+\varphi_1(r, t).$$
Moreover, we have the following estimates as $t\to +\infty$:
$$\mu_1(t)\sim e^{-c_1t}$$
and
$$\mu_2(t) \sim e^{-\frac{c_*e^{2c_1t}}{2c_1}}$$
for some constants $c_1> 0$ and $c_* > 0$.  Furthermore one has  $\varphi_1(r, t)\to 0$ and $\varphi_2(r, t)\to 0$ as $t\to +\infty$, uniformly away from the point $r = 0$.

\vspace{0.2cm}

Equivalently, we have

(2) There exists a solution $A(x, t)$ to (\ref{e:Yangmillsheatflow}) of the form
$$
A(x, t) = Im\left(\eta_0\left(\frac{|x|}{\sqrt{\mu_2(t)}}\right)\frac{x}{\mu_2(t)^2+|x|^2}d\bar{x}\right)+A_1(x, t)++\tilde\varphi_2(x, t)
$$
with $\tilde\varphi_2(x, t) = Im\left(\frac{x}{2r^2}\varphi_2(r, t)d\bar{x}\right)$
and $A_1(x, t)$ is the one bubble solution of (\ref{e:Yangmillsheatflow}) constructed in Theorem \ref{t:main}; $A_1(x, t)$ has the following form
$$A_1(x, t) = Im\left(\frac{x}{1+|x|^2}d\bar{x}\right)+Im\left(\eta_0\left(\frac{|x|}{\sqrt{\mu_1(t)}}\right)\frac{x}{\mu_1(t)^2+|x|^2}d\bar{x}\right)+\tilde\varphi_1(x, t),$$
Moreover, the parameters satisfy $\mu_1(t)\sim e^{-c_1t}$ and $\mu_2(t) \sim e^{-\frac{c_*e^{2c_1t}}{2c_1}}$ as $t\to +\infty$, $c_1> 0$ for some constants $c_1,c_* > 0$.  The 1-forms $\tilde\varphi_1(x, t)\to 0$ and $\tilde\varphi_2(x, t)\to 0$ as $t\to +\infty$, uniformly away from the point $x = 0$.
\end{theorem}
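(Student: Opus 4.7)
Part (2) follows from part (1) via the $SO(4)$-equivariant ansatz $A(x,t)=\mathrm{Im}\bigl(\tfrac{x}{2r^2}\psi(r,t)\,d\bar x\bigr)$, which reduces (\ref{e:Yangmillsheatflow}) to the scalar equation (\ref{bubbletowerIntro123}), so I focus on part (1). The plan is to construct $\psi$ as a perturbation of a bubble-tower approximation by the inner-outer parabolic gluing method of \cite{delPinoMussoJEMS, DDW2020}, with $\psi_1$ (the one-bubble solution of Theorem~\ref{t:main}) playing the role of a time-dependent background. Concretely I take the ansatz
$$
\psi(r,t)=U_{\mu_2(t)}(r)-\psi_1(r,t)+\varphi(r,t),\qquad U_\mu(r)=\frac{2r^2}{\mu^2+r^2},
$$
where $\mu_2(t)$ is a modulation parameter to be determined and $\varphi$ is a small remainder. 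Substituting into (\ref{bubbletowerIntro123}) and using $(\psi_1)_t=N(\psi_1)$, where $N$ is the right-hand side of (\ref{bubbletowerIntro123}), a short algebraic computation shows that the error produced by the frozen ansatz reduces to
$$
E(r,t)=\dot\mu_2(t)\,\partial_\mu U_\mu\big|_{\mu=\mu_2}-\frac{6}{r^2}(2-U_{\mu_2})\,\psi_1\,(U_{\mu_2}-\psi_1)+\mathrm{h.o.t.},
$$
so that the full problem reads $\varphi_t-\mathcal L\varphi=E+\mathcal N(\varphi)$ with $\mathcal L$ the linearization of $N$ at $U_{\mu_2}-\psi_1$ and $\mathcal N$ superlinear in $\varphi$.

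The next step is to solve the inner problem at the new bubble. In self-similar coordinates $y=r/\mu_2$, the operator $\mathcal L$ is to leading order $\mu_2^{-2}L_0$, where $L_0$ is the instanton linearization whose bounded kernel is spanned by the scaling mode $Z_0(y)=-\tfrac{4y^2}{(1+y^2)^2}$; solvability of $L_0\phi=F$ requires $\int_0^\infty F(y)\,Z_0(y)\,y\,dy=0$. Using $\mu_2\ll\mu_1\ll 1$ and the expansion
$$
\psi_1(r,t)=\frac{2r^2}{\mu_1(t)^2}\bigl(1+o(1)\bigr)\qquad\text{for } r\ll\mu_1(t),
$$
the projections onto $Z_0$ of the two terms of $E$ can be computed explicitly: the modulation term contributes $\dot\mu_2/\mu_2$ times a finite constant, while the interaction scales as $\mu_1(t)^{-2}$ times a fixed integral. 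Setting the sum equal to zero yields the modulation ODE
$$
\frac{\dot\mu_2(t)}{\mu_2(t)}=-\frac{c_*}{\mu_1(t)^2}\bigl(1+o(1)\bigr),\qquad c_*>0.
$$
Plugging in $\mu_1(t)\sim e^{-c_1 t}$ from Theorem~\ref{t:main} and integrating yields $\mu_2(t)\sim\exp\!\bigl(-\tfrac{c_*}{2c_1}\,e^{2c_1 t}\bigr)$, which is exactly the claimed double-exponential decay.

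With the modulation chosen, the $Z_0$-orthogonal component of $E$ is inverted via the explicit Green's representation for $L_0$ in self-similar variables, as developed for the analogous harmonic-map heat flow in \cite{DDW2020}, yielding an inner correction $\phi^{\mathrm{in}}$ adapted to the new bubble at scale $\mu_2$. The remaining outer error satisfies a linear parabolic equation on $(0,\infty)\times[T_0,\infty)$ whose right-hand side carries extra temporal smallness (a power of $\mu_2$) and spatial decay; it is controlled by standard parabolic estimates for the leading outer operator $\partial_t-\partial_r^2-r^{-1}\partial_r+2r^{-2}$. A contraction argument in weighted $L^\infty$-norms adapted to both bubble scales then closes the construction for all sufficiently large initial times and recovers the statement of part (1); part (2) follows by applying the equivariant ansatz termwise and matching the form of $\tilde\varphi_1,\tilde\varphi_2$ required in the theorem.

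The hardest part is managing the three widely separated scales $\mu_2(t)\ll\mu_1(t)\ll 1$ simultaneously, together with the fact that the background $\psi_1$ is not stationary but carries its own modulation $\mu_1(t)$ evolving by the ODE of Theorem~\ref{t:main}. One must choose a single family of weighted norms that captures the decay of $\varphi$ near the new bubble at scale $\mu_2$, is compatible with the slower (merely exponential) decay of $\varphi_1$ near the larger bubble at scale $\mu_1$, and controls the matching in the intermediate region $\mu_2\ll r\ll\mu_1$. A secondary technical point is that the tail $U_{\mu_2}(r)-2\sim -2\mu_2^2/r^2$ of the new bubble must be absorbed into the outer correction without disturbing $\psi_1$ or altering the asymptotic $\mu_1(t)\sim e^{-c_1 t}$ to leading order; in particular, one must verify that the new source fed back into the $\mu_1$-equation by the presence of the small bubble is negligibly small, so that the one-bubble dynamics of Theorem~\ref{t:main} can be used as a black box rather than redone from scratch.
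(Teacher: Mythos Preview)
Your overall strategy matches the paper's: both use the \emph{exact} one-bubble solution of Theorem~\ref{t:main} as a time-dependent background, set up an inner--outer gluing, and obtain the modulation law $\dot\mu_2/\mu_2\sim -c_*/\mu_1^2$ (hence the double-exponential rate) by projecting the error onto the scaling mode at the $\mu_2$-bubble. The key conceptual step---treating $\psi_1$ as a black box rather than re-tuning $\mu_1$---is exactly the new idea the paper emphasizes.

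The execution differs in three places worth flagging. First, the paper does not work with $\psi$ directly but with $\bar\psi=r^{-2}\psi$, which converts (\ref{bubbletowerIntro123}) into the six-dimensional-type equation $\bar\psi_t=\bar\psi_{rr}+\tfrac{5}{r}\bar\psi_r+(6-2r^2\bar\psi)\bar\psi^2$; this makes the linear theory cleaner (the inner linearization becomes $\partial_{yy}+\tfrac{5}{y}\partial_y+(12W-6y^2W^2)$ with $W(y)=\tfrac{2}{y^2(y^2+1)}$) and allows a symmetry trick at the end: the constructed $\bar\psi$ is turned into the desired tower by observing that $\tfrac{2}{r^2}-\bar\psi$ is again a solution. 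Second, and more substantively, the paper runs \emph{two} inner problems, one at each scale $\mu_1,\mu_2$, not one. The inner problem at scale $\mu_1$ carries a free multiplier $c(t)Z(y_1)$ whose sole purpose is to enforce orthogonality there; without it the outer potential $V$ is not small in the annulus $R^{-1}\mu_1\lesssim r\lesssim R\mu_1$ (it is of size $\mu_1^{-2}$), and the ``standard parabolic estimates'' you invoke for the outer problem would not close. Your last paragraph correctly identifies this region as the hard one, but the fix is an additional inner correction at $\mu_1$, not merely a well-chosen outer norm. Third, your outer operator $\partial_t-\partial_r^2-r^{-1}\partial_r+2r^{-2}$ is not quite the one that arises; after the $\bar\psi$ change of variables the paper's outer problem is a perturbation of the six-dimensional radial heat operator $\partial_t-\partial_r^2-\tfrac{5}{r}\partial_r$, with the residual potential $V$ made small by the two cutoffs $\zeta_1,\zeta_2$.

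In short: the modulation analysis and the use of the exact one-bubble background are right and match the paper, but to make the gluing close you need a second inner problem at the $\mu_1$ scale (with its own tuning parameter), and the $\bar\psi$ transformation plus the final $\bar\psi\mapsto \tfrac{2}{r^2}-\bar\psi$ symmetry are what produce the specific form stated in the theorem.
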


Theorem \ref{t:main-bubble-tower} is new even in the $SO(4)$-equivariant case. If we use the transformation $\bar\psi = r^{-2}\psi$, then (\ref{bubbletowerIntro123}) becomes the following heat equation
\begin{equation}\label{bubbletower111intro}
\frac{\partial }{\partial t}\bar\psi = \bar\psi_{rr} + \frac{5}{r}\bar\psi_r +(6-2r^2\bar\psi)\bar\psi^2
\end{equation}
with steady solution $\bar\psi_0(r) = \frac{2}{r^2+\lambda^2}$. (\ref{bubbletower111intro}) is an evolution ODE, which enjoys very similar properties as the six-dimensional energy critical heat equation.  However, despite this analogy, the constructions in \cite{DelpinoMussoWei} and \cite{SunWeiZhang} for the nonlinear heat equation are designed to handle space dimensions $\geq 7$. The estimates for the outer problem (which is the main difficulty in the construction of bubble-tower solutions) are very hard to apply to the six-dimensional case since the blow-up dynamics are exponential. Our new idea to adjust the bubble with respect to the {\sl exact} solution constructed in Theorem \ref{t:main}. This gives us a uniform scaling parameter for the outer problem. Using this idea, we write the first approximation of $\bar{\psi}(r, t)$ as
$$
\bar U(r, t) = U_*(r, t)+\eta_0\left(\frac{r}{\sqrt{\mu_2(t)}}\right)\frac{1}{\mu_2(t)^2}U\left(\frac{r}{\mu_2(t)}\right)
$$
with
$$
U(r) = \frac{2}{r^2+1}
$$
and $U_*(r, t)$ is the one bubble solution of (\ref{bubbletower111intro}) constructed in Theorem \ref{t:main}. Then we use the inner-outer gluing scheme which gives us a solution of (\ref{bubbletower111intro}) with form
$$
\bar \psi(r, t) = U_*(r, t)+\eta_0\left(\frac{r}{\sqrt{\mu_2(t)}}\right)\frac{1}{\mu_2(t)^2}U\left(\frac{r}{\mu_2(t)}\right)+\varphi_2(r, t).
$$
% Observe that $\frac{2}{r^2}-\bar\psi(r, t)$ is also a solution of (\ref{bubbletower111intro}) and this solution has the form
% $$
% \frac{2}{r^2}-\bar\psi(r, t) = -U_*(r, t)+\frac{1}{\mu_2(t)^2}U\left(\frac{r}{\mu_2(t)}\right)-\varphi_2(r, t),
% $$
% which is the desired solution.

\vspace{0.5cm}

{\bf The main difficulty and the  Donaldson-De Turck trick for Yang-Mills heat flow}. As mentioned above, the gauge group  consists of all smooth maps from $M$ into $G\subset SO(4)$. The Yang-Mills equations are gauge-invariant, making them non-elliptic. Another feature is that for any connection $A$, there exists a gauge transformation $S$ such that $S(A)$ is a  Coulomb gauge, e.g. $\sum_i \partial_i A_i = 0$, see \cite{Tiangang2002}. A way to fix the gauge is to consider the Coulomb gauge which turns the equations into a strongly elliptic system of the form
$$
\overline{\Delta} A_\alpha+ \text{terms involving only lower-order derivatives of } A_i = 0,
$$
where $\overline{\Delta}_A$ is the Bochner laplacian. In the case of the heat flow, one needs to write the gradient flow of the functional $YM$ in a gauge-invariant fashion.

We describe now the argument in Section 4 of \cite{Struwe1994} (see also \cite{DeTurck1983}, \cite{DonaldsonKronheimer} and \cite{Feehan}) which relies on a version of De Turck's trick for Ricci flow . For $T\in (0, +\infty]$,
let $A_1$ be a smooth connection and suppose that $A = A_1+\varphi$ is a smooth solution of the following Cauchy problem
\begin{equation}\label{e:Yang-Millsmodified}
\left\{
\begin{aligned}
\frac{\partial A}{\partial t} & + D_A^*F_A + D_AD_A^*\varphi = 0\text{ on } M\times (0, T),\\
& A(0) = A_0.
\end{aligned}
\right.
\end{equation}
Through the identification
\begin{equation*}
s = S^{-1}\circ \frac{d}{dt}S = -D_A^*\varphi,
\end{equation*}
the solution $\varphi = \varphi(t)$ generates a family of gauge transformations $S$ that can be readily  recovered by solving the initial value problem
\begin{equation*}
\frac{d}{dt}S = S\circ s, \quad S(0) = id.
\end{equation*}
Define $\tilde A: = (S^{-1})^*A$, then the connection $\tilde A$ is a smooth solution of the Yang-Mills gradient flow
\begin{equation}\label{e:Yang-Millslinearized}
\left\{
\begin{aligned}
\frac{\partial \tilde A}{\partial t} & + D_{\tilde A}^*F_{\tilde A} = 0\text{ on }M\times (0, T),\\
& \tilde A(0) = A_0.
\end{aligned}
\right.
\end{equation}
On the other hand, if $\tilde A$ is a solution of the Yang-Mills gradient flow (\ref{e:Yang-Millslinearized}), then the connection defined by $A:=S^*\tilde A$ is a solution of the Cauchy problem (\ref{e:Yang-Millsmodified}).

Furthermore, the connection $A-A_1$ belongs to the space $C(M\times [0, T), T^*M \otimes \mathfrak g)\cap C^\infty(M\times (0, T), T^*M \otimes \mathfrak g)$ if and only if the same is true for the connection $\tilde A - A_1$. If the initial connection $A_0$ is of class $C^\infty$ , then the connection $A-A_1$ belongs to the space $C^\infty(M\times [0, T), T^*M \otimes \mathfrak g)$ if and only if the same is true for the connection $\tilde A - A_1$. See Lemma 20.3 in \cite{Feehan} for more regularity results.

\section{The Approximation}
\subsection{BPST/ADHM instantons.}
Recall the following notations. We use the quaternions
$$
x = x_1 + x_2i + x_3 j + x_4k\in \mathbb H
$$
for elements of $\mathbb{R}^4$.  One can then construct an instanton (see \cite{AHDM}, \cite{BPST}) by considering
\begin{equation}\label{BPST-instant}
B(x) = Im\left(\frac{x}{1+|x|^2}d\bar{x}\right), \quad F_B = \frac{dx\wedge d\bar{x}}{(1+|x|^2)^2}.
\end{equation}
% We recall that $dx\wedge d\bar{x}$ is defined by
% \begin{equation*}
% \begin{aligned}
% dx\wedge d\bar{x} &= \left(dx_1 + i dx_2 + j dx_3 + kdx_4\right)\wedge \left(dx_1 - i dx_2 - j dx_3 - kdx_4\right)\\
% & = -2\left[i(dx_1\wedge dx_2 + dx_3\wedge d_4) + j(dx_1\wedge dx_3 + dx_4\wedge d_2)\right. \\
% &\quad \left. + k(dx_1\wedge dx_4 + dx_2\wedge d_3)\right].
% \end{aligned}
% \end{equation*}
Let us write
$$
B = \sum_{i = 1}^4 B_i dx_i,\quad F = \sum_{i < j}F_{ij}dx_i\wedge dx_j,
$$
then we have
$$
B_1(x) = Im\left(\frac{x}{1+|x|^2}\right) = \frac{x_2 i + x_3 j + x_4 k}{1+|x|^2},
$$
$$
B_2(x) = Im\left(\frac{-x i}{1+|x|^2}\right) = \frac{-x_1 i + x_3 k - x_4 j}{1+|x|^2},
$$
$$
B_3(x) = Im\left(\frac{-x j}{1+|x|^2}\right) = \frac{-x_1 j - x_2 k + x_4 i}{1+|x|^2},
$$
$$
B_4(x) = Im\left(\frac{-x k}{1+|x|^2}\right) = \frac{-x_1 k + x_2 j - x_3 i}{1+|x|^2}.
$$
See \cite{Atiyah1979} and \cite{FreedUhlenbeck1991} for more details and some background. Based on this solution, 't Hooft constructed the following $5-$parameter family of solutions of the Yang-Mills equation:
$$
B_{\mu, \xi}(x) = Im\left(\frac{x-\xi}{\mu^2+|x-\xi|^2}d\bar{x}\right), \quad\mu\in \mathbb{R},\quad \xi\in \mathbb H,
$$
with curvature
$$
F_{B_{\mu, \xi}} = \frac{\mu^2dx\wedge d\bar{x}}{(\mu^2+|x-\xi|^2)^2}.
$$
It was proved by Atiyah-Hitchin-Singer \cite{AtiyahHitchinSinger1977,AtiyahHitchinSinger1978} that these are all the self-dual solutions of Yang-Mills with instanton number 1. More precisely, the equivalence class of potentials $B$ on $\mathbb{R}^4$ with $YM(B) = 8\pi^2$ is the moduli space $\mathcal M$ of self-dual connection on the Hopf bundle $\mathbb{S}^3\to \mathbb{S}^7\to \mathbb{S}^4$, and the points in the moduli space $\mathcal M$ are in one-to-one correspondence with the set of all such pairs $(\mu, \xi)$, with $\mu >0$ and $\xi\in \mathbb{H}$, see Section 6.5 of \cite{NaberTGG}. 
% The explicit form of $B_{\mu, \xi}$ and $F_{B_{\mu, \xi}}$ are:
% $$
% (B_{\mu, \xi})_1 = Im\left(\frac{x-\xi}{\mu^2+|x-\xi|^2}\right) = \frac{(x_2-\xi_2)i + (x_3 - \xi_3)j + (x_4 - \xi_4)k}{\mu^2+|x-\xi|^2},
% $$
% $$
% (B_{\mu, \xi})_2 = Im\left(\frac{-(x - \xi)i}{\mu^2+|x-\xi|^2}\right) = \frac{-(x_1 - \xi_1)i + (x_3 - \xi_3)k - (x_4 - \xi_4)j}{\mu^2+|x-\xi|^2},
% $$
% $$
% (B_{\mu, \xi})_3 = Im\left(\frac{-(x - \xi)j}{\mu^2+|x-\xi|^2}\right) = \frac{-(x_1 - \xi_1)j - (x_2-\xi_2)k + (x_4 - \xi_4)i}{\mu^2+|x-\xi|^2},
% $$
% $$
% (B_{\mu, \xi})_4 = Im\left(\frac{-(x - \xi)k}{\mu^2+|x-\xi|^2}\right) = \frac{-(x_1 - \xi_1)k + (x_2-\xi_2)j - (x_3 - \xi_3)i}{\mu^2+|x-\xi|^2}.
% $$

\subsection{The linearized operator.}
The full form of the stationary Yang-Mills equation is
\begin{equation}\label{e:Yangmillsequationfullform}
\Delta B_j -\sum_{i = 1}^4\partial_i\partial_jB_i + \sum_{i = 1}^4[\partial_i B_i, B_j] + \sum_{i = 1}^4[B_i, \partial_i B_j] + \sum_{i = 1}^4[B_i, \partial_i B_j - \partial_j B_i + [B_i, B_j]]  = 0
\end{equation}
for $j = 1,\cdots, 4$.

We denote the linearized equation at the BPST/ADHM instanton as $L_B$. This is not an elliptic operator because of the term $\sum_{i = 1}^4\partial_i\partial_j\phi_i$. By the Donaldson-De Turck trick explained in the introduction, we consider the following modified linearized operator
\begin{equation}
\mathcal L [\phi]: = L_B \phi + D_BD^*_B \phi = \nabla_B^*\nabla_B\phi -2*[*F_B, \phi].
\end{equation}
The operator $L_B \phi + D_BD^*_B \phi$ is now (strongly) elliptic. If we write $L_B\phi + D_BD^*_B \phi = \sum_{j=1}^4 \mathcal L_j [\phi]dx_j$, then the {\sl  elliptic } linearized operator at the BPST/ADHM instanton is
\begin{equation}\label{e:Yangmillsequationlinearized}
\begin{aligned}
\mathcal L_j [\phi]
% &:= \Delta \phi_j -\sum_{i = 1}^4\partial_i\partial_j\phi_i + \sum_{i = 1}^4[\partial_i \phi_i, B_j] + \sum_{i = 1}^4[\partial_i B_i, \phi_j]\\
% &\quad + \sum_{i = 1}^4[\phi_i, \partial_i B_j] + \sum_{i = 1}^4[B_i, \partial_i\phi_j]  + \sum_{i = 1}^4[\phi_i, \partial_i B_j - \partial_j B_i + [B_i, B_j]]\\
% &\quad + \sum_{i = 1}^4[B_i, \partial_i\phi_j - \partial_j\phi_i + [\phi_i, B_j]+[B_i, \phi_j]]\\
% &\quad + \partial_j\left(\sum_{i = 1}^4\partial_i\phi_i+\sum_{i = 1}^4[B_i, \phi_i]\right)+[B_j, \sum_{i = 1}^4\partial_i\phi_i+\sum_{i = 1}^4[B_i, \phi_i]]\\
& =\Delta \phi_j  + \sum_{i = 1}^4[\partial_i B_i, \phi_j]\\
&\quad + \sum_{i = 1}^4[\phi_i, \partial_i B_j] + \sum_{i = 1}^4[B_i, \partial_i\phi_j]  + \sum_{i = 1}^4[\phi_i, \partial_i B_j - \partial_j B_i + [B_i, B_j]]\\
&\quad + \sum_{i = 1}^4[B_i, \partial_i\phi_j - \partial_j\phi_i + [\phi_i, B_j]+[B_i, \phi_j]]+\sum_{i = 1}^4[B_j, [B_i, \phi_i]]\\
&\quad + \sum_{i = 1}^4 \partial_j[B_i, \phi_i]
\end{aligned}
\end{equation}
for $j = 1,\cdots, 4$.

The elements in the kernel of this operator are (see \cite{Brendle2003})
$$
Z^0_1 = 2\frac{x_2 i + x_3 j + x_4 k}{(1+|x|^2)^2},\quad Z^0_2 = 2\frac{-x_1 i + x_3 k - x_4 j}{(1+|x|^2)^2},
$$
$$
Z^0_3 = 2\frac{-x_1 j - x_2 k + x_4 i}{(1+|x|^2)^2},\quad Z^0_4 = 2\frac{-x_1 k + x_2 j - x_3 i}{(1+|x|^2)^2}
$$
and
$$
Z^1_1 = 0, \quad Z^1_2 = \frac{2i}{(1+|x|^2)^2}, \quad Z^1_3 = \frac{2j}{(1+|x|^2)^2}, \quad Z^1_4 = \frac{2k}{(1+|x|^2)^2},
$$
$$
Z^2_1 = \frac{-2i}{(1+|x|^2)^2}, \quad Z^2_2 = 0, \quad Z^2_3 = \frac{2k}{(1+|x|^2)^2}, \quad Z^2_4 = \frac{-2j}{(1+|x|^2)^2},
$$
$$
Z^3_1 = \frac{-2j}{(1+|x|^2)^2}, \quad Z^3_2 = \frac{-2k}{(1+|x|^2)^2}, \quad Z^3_3 = 0, \quad Z^3_4 = \frac{2i}{(1+|x|^2)^2},
$$
$$
Z^4_1 = \frac{-2k}{(1+|x|^2)^2},\quad Z^4_2 = \frac{2j}{(1+|x|^2)^2},\quad  Z^4_3 = \frac{-2i}{(1+|x|^2)^2},\quad Z^4_4 = 0,
$$
$$
Z^5_1 = x_1F_{12}+x_3F_{14}-x_4F_{13}, \quad Z^5_2 = -x_2F_{21}+x_3F_{24}-x_4F_{23},
$$
$$
Z^5_3 = x_1F_{32}-x_2F_{31}+x_3F_{34}, \quad Z^5_4 = x_1F_{42}-x_2F_{41}-x_4F_{43},
$$
$$
Z^6_1 = x_1F_{13} + x_2F_{14}-x_4F_{12}, \quad Z^6_2 = x_1F_{23}-x_3F_{21}+x_2F_{24},
$$
$$
Z^6_3 = -x_3F_{31} + x_2F_{34}-x_4F_{32}, \quad Z^6_4 =  x_1F_{43}-x_3F_{41}-x_4F_{42},
$$
$$
Z^7_1 = x_1F_{14} + x_2F_{13}-x_3F_{12}, \quad Z^7_2 = x_1F_{24}-x_4F_{21}+x_2F_{23},
$$
$$
Z^7_3 = x_1F_{34}-x_4F_{31}-x_3F_{32}, \quad Z^7_4 = -x_4F_{41} + x_2F_{43}-x_3F_{42}.
$$
Here we have used the notation $Z^i = \sum_{j=1}^4Z^i_jdx_j$.
Note that $|Z^0| \sim \frac{1}{|x|^3}$ and $|Z^i| \sim \frac{1}{|x|^4}$ as $|x|\to +\infty$, $i = 1, 2, 3, 4$. The three kernel $Z^i$, $i = 5, 6, 7$, with decays like $\frac{1}{|x|^3}$ at infinity, can be written as
$$
Z^i=F_B\left(\theta^i_{\rho\sigma}x_\sigma\frac{\partial}{\partial x_\rho},\cdot\right),\quad i = 5,6,7,
$$
with $\theta^i$ being the three basis of $\Lambda^2_+\mathbb{R}^4$, see Proposition 2.7 in \cite{Brendle2003}. $Z^i$ ($i = 0, 1, 2, 3, 4$) are due to scaling and translation invariance of the Yang-Mills connections, that is to say, $Z^0 = \left(\frac{d}{d\mu}B_{\mu, \xi}\right)|_{\mu=1, \xi=0}$, $Z^i = \left(\frac{d}{d\xi_i}B_{\mu, \xi}\right)|_{\mu=1, \xi=0}$, $i = 1, 2, 3, 4$. While $Z^i$, $i = 5,6,7$ are generated from the global $SU(2)=\mathbb{S}^3$ gauge equivalence for the Yang-Mills connections, see page 87 of \cite{Yangyisong}. We summarize the above arguments as follows.
\begin{lemma}\label{nondegeneracy}
For the instanton defined in (\ref{BPST-instant}), a smooth 1-form $\phi$ satisfies $L_B\phi + D_BD^*_B \phi = 0$ if and only if $\phi$ is linear combination of $Z^i$, $i = 0, 1,\cdots, 7$.
\end{lemma}

As we explained in the elliptic case, we consider the following modified  {\sl parabolic} linearized operator at the BPST/ADHM instanton,
\begin{equation}\label{e:Yangmillsflowlinearized}
\begin{aligned}
\partial_t \phi_j & = \Delta \phi_j + \sum_{i = 1}^4[\phi_i, \partial_i B_j] + \sum_{i = 1}^4[B_i, \partial_i\phi_j]  + \sum_{i = 1}^4[\phi_i, \partial_i B_j - \partial_j B_i + [B_i, B_j]]\\
&\quad + \sum_{i = 1}^4[B_i, \partial_i\phi_j - \partial_j\phi_i + [\phi_i, B_j]+[B_i, \phi_j]]\\
&\quad +\partial_j\sum_{i = 1}^4[B_i, \phi_i]  + \sum_{i = 1}^4[B_j, [B_i, \phi_i]]\\
&: = \Delta \phi_j + \tilde{\mathcal L}_j [\phi]
\end{aligned}
\end{equation}
with
\begin{equation*}\label{e:linearizedoperatorhighorder}
\begin{aligned}
\tilde{\mathcal L}_j [\phi]&: = \sum_{i = 1}^4[\phi_i, \partial_i B_j] + \sum_{i = 1}^4[B_i, \partial_i\phi_j]  + \sum_{i = 1}^4[\phi_i, \partial_i B_j - \partial_j B_i + [B_i, B_j]]\\
&\quad + \sum_{i = 1}^4[B_i, \partial_i\phi_j - \partial_j\phi_i + [\phi_i, B_j]+[B_i, \phi_j]]\\
&\quad +\partial_j\sum_{i = 1}^4[B_i, \phi_i]  + \sum_{i = 1}^4[B_j, [B_i, \phi_i]]
\end{aligned}
\end{equation*}
for $j = 1,\cdots, 4$. Also we define the nonlinear term as
\begin{equation*}\label{e:nonlinearizedoperator}
\begin{aligned}
N_j[\phi]:& = \sum_{i = 1}^4\left([\phi_i, \partial_i\phi_j]+[\partial_i\phi_i, \phi_j]\right) + \sum_{i = 1}^4[\phi_i, \partial_i\phi_j - \partial_j\phi_i + [\phi_i, B_j]]\\
&\quad + \sum_{i = 1}^4[\phi_i, \partial_i\phi_j - \partial_j\phi_i + [B_i, \phi_j]] + \sum_{i = 1}^4[B_i, [\phi_i, \phi_j]] + \sum_{i = 1}^4[\phi_i, [\phi_i, \phi_j]]
\end{aligned}
\end{equation*}
and $N[\phi] = \sum_{j=1}^4N_j[\phi]dx_j$.

\subsection{The ansatz.}\label{section2.3}
For $\mu(t)\in \mathbb{R}^+$, $\xi(t)\in \mathbb{R}^4$, we define the approximate solution as follows
\begin{equation}\label{e:ansatz}
\begin{aligned}
A_{\mu, \xi, \theta}(x, t): &= \eta_0\left(\frac{|x-q|}{\sqrt{\mu(t)}}\right) B_{\mu, \xi}(x, t)+B_{1, q}(x) + F_{B_{\mu,\xi}}\left(\theta_{\rho\sigma}(t)(x-\xi(t))_{\sigma}\frac{\partial}{\partial x_\rho},\cdot\right)
\end{aligned}
\end{equation}
with 
% \begin{equation}
% \begin{aligned}
% &B_{\mu, \xi}^*(x, t)\\ 
% &= \eta_0\left(\frac{|x-q|}{\sqrt{\mu(t)}}\right) B_{\mu, \xi}(x, t)+B_{1, q}(x)\\
% &= \eta_0\left(\frac{|x-q|}{\sqrt{\mu(t)}}\right)Im\left(\frac{x-\xi(t)}{\mu(t)^2+|x-\xi(t)|^2}d\bar{x}\right)+Im\left(\frac{x-q}{1+|x-q|^2}d\bar{x}\right)
% \end{aligned}
% \end{equation} and 
$\theta(t)$ being a 2-form depending on $t$.
% Denoting $B_{\mu, \xi}^*(x, t) = \sum_{i= 1}^4B_{\mu, \xi, i}^*(x, t)dx_i$, we have
% \begin{equation*}
% \begin{aligned}
% B_{\mu, \xi, 1}^*(x, t) &= \frac{(x_2-\xi_2(t))i + (x_3 - \xi_3(t))j + (x_4 - \xi_4(t))k}{\mu(t)^2+|x-\xi(t)|^2}\\
% &\quad -\frac{(x_2-q_2)i + (x_3 - q_3)j + (x_4 - q_4)k}{1+|x-q|^2},
% \end{aligned}
% \end{equation*}
% \begin{equation*}
% \begin{aligned}
% B_{\mu, \xi, 2}^*(x, t) & = \frac{-(x_1 - \xi_1(t))i + (x_3 - \xi_3(t))k - (x_4 - \xi_4(t))j}{\mu(t)^2+|x-\xi(t)|^2}\\
% &\quad -\frac{-(x_1 - q_1)i + (x_3 - q_3)k - (x_4 - q_4)j}{1+|x-q|^2},
% \end{aligned}
% \end{equation*}
% \begin{equation*}
% \begin{aligned}
% B_{\mu, \xi, 3}^*(x, t) & = \frac{-(x_1 - \xi_1(t))j - (x_2-\xi_2(t))k + (x_4 - \xi_4(t))i}{\mu(t)^2+|x-\xi(t)|^2}\\
% &\quad -\frac{-(x_1 - q_1)j - (x_2- q_2)k + (x_4 - q_4)i}{1+|x-q|^2},
% \end{aligned}
% \end{equation*}
% \begin{equation*}
% \begin{aligned}
% B_{\mu, \xi, 4}^*(x, t)  & = \frac{-(x_1 - \xi_1(t))k + (x_2-\xi_2(t))j - (x_3 - \xi_3(t))i}{\mu(t)^2+|x-\xi(t)|^2}\\
% &\quad -\frac{-(x_1 - q_1)k + (x_2-q_2)j - (x_3 - q_3)i}{1+|x-q|^2}.
% \end{aligned}
% \end{equation*}
In the sequel, we compute the contributions of each term involving $B_{\mu,\xi}$ in the error:
\begin{equation*}
\begin{aligned}
-(B_{\mu, \xi})_t&= 2Im\left(\frac{x-\xi(t)}{(\mu(t)^2+|x-\xi(t)|^2)^2}d\bar{x}\right)\mu(t)\dot\mu(t) + \sum_{i=1}^4\frac{\dot\xi_i}{\mu^2}(t)Z^i(y)|_{y=\frac{x-\xi(t)}{\mu(t)}}
\end{aligned}
\end{equation*}
And the linear error of $F_{B_{\mu,\xi}}\left(\theta_{\rho\sigma}(t)(x-\xi(t))_{\sigma}\frac{\partial}{\partial x_\rho},\cdot\right)$ can be computed as follows,
\begin{equation*}
\begin{aligned}
&(-\partial_t+\mathcal L_{B_{\mu, \xi}})\left(F_{B_{\mu,\xi}}\left(\theta_{\rho\sigma}(t)(x-\xi(t))_{\sigma}\frac{\partial}{\partial x_\rho},\cdot\right)\right)\\
&= -\partial_t\left(F_{B_{\mu,\xi}}\left(\theta_{\rho\sigma}(t)(x-\xi(t))_{\sigma}\frac{\partial}{\partial x_\rho},\cdot\right)\right)\\
&= -F_{B_{\mu,\xi}}\left(\dot{\theta}_{\rho\sigma}(t)(x-\xi(t))_{\sigma}\frac{\partial}{\partial y_\rho},\cdot\right)\\
&\quad +F_{B_{\mu,\xi}}\left(\theta_{\rho\sigma}(t)\dot\xi(t)_{\sigma}\frac{\partial}{\partial y_\rho},\cdot\right) -\frac{\partial}{\partial \mu}F_{B_{\mu,\xi}}\dot\mu\left(\theta_{\rho\sigma}(t)(x-\xi(t))_{\sigma}\frac{\partial}{\partial x_\rho},\cdot\right)\\
&\quad -\frac{\partial}{\partial \xi}F_{B_{\mu,\xi}}\dot\xi\left(\theta_{\rho\sigma}(t)(x-\xi(t))_{\sigma}\frac{\partial}{\partial x_\rho},\cdot\right).
\end{aligned}
\end{equation*}

\subsection{Improvement of the error.}
The key point of this paper is solving the linearized problem near the blow-up point with the error of approximation solution as a perturbation term. From the linear theory for the inner problem (see Proposition \ref{proposition5.1}), we need an approximate solution with error decaying faster than $\frac{1}{|x|^3}$ at infinity. 

Observe that the terms $-(B_{\mu, \xi})_t$ and $$(-\partial_t+\mathcal L_{B_{\mu, \xi}})\left(F_{B_{\mu,\xi}}\left(\theta_{\rho\sigma}(t)(x-\xi(t))_{\sigma}\frac{\partial}{\partial x_\rho},\cdot\right)\right)$$ decay like $\frac{1}{|x|^3}$ as $|x|\to +\infty$. Inspired by ideas of \cite{DDW2020}, we improve the approximation by adding nonlocal terms to cancel the main part of the error for $A_{\mu, \xi, \theta}$.

The main terms in $-(B_{\mu, \xi})_t$ and $$(-\partial_t+\mathcal L_{B_{\mu, \xi}})\left(F_{B_{\mu,\xi}}\left(\theta_{\rho\sigma}(t)(x-\xi(t))_{\sigma}\frac{\partial}{\partial x_\rho},\cdot\right)\right)$$ are
\begin{equation*}
\begin{aligned}
&2Im\left(\frac{x-\xi(t)}{(\mu(t)^2+|x-\xi(t)|^2)^2}d\bar{x}\right)\mu(t)\dot\mu(t) +F_{B_{\mu,\xi}}\left(\dot{\theta}_{\rho\sigma}(t)(x-\xi(t))_{\sigma}\frac{\partial}{\partial y_\rho},\cdot\right)\\
&=\frac{\dot\mu(t)}{\mu(t)^2}Z^0(y)|_{y=\frac{x-\xi(t)}{\mu(t)}}+F_{B_{\mu,\xi}}\left(\dot{\theta}_{\rho\sigma}(t)(x-\xi(t))_{\sigma}\frac{\partial}{\partial y_\rho},\cdot\right).
\end{aligned}
\end{equation*}
We look for a differential 1-form $\Phi(x, t)$ that satisfies the following equation
\begin{equation*}
\begin{aligned}
&-\Phi(x, t)_t + (d^*d + dd^*)\Phi(x, t) + 2Im\left(\frac{x-\xi(t)}{(\mu(t)^2+|x-\xi(t)|^2)^2}d\bar{x}\right)\mu(t)\dot\mu(t)\\
&\quad\quad\quad\quad\quad\quad\quad\quad\quad\quad\quad\quad  + F_B\left(\dot \theta_{\rho\sigma}(t)(x-\xi(t))_{\sigma}\frac{\partial}{\partial y_\rho},\cdot\right) = 0 \text{ in }\mathbb{R}^4\times (t_0, +\infty)
\end{aligned}
\end{equation*}
at main order. Set $\Phi(x, t): = \Phi_0(x, t) + \Phi_{1}(x, t)$, $$\Phi_0(x, t): = Im\left((x-\xi(t))\psi^{(0)}(z(\tilde{r}), t)\right)d\bar{x},$$
$$\Phi_{1}(x, t): = dx\wedge d\bar x\left(\psi^{(\rho\sigma)}(z, t)(x-\xi(t))_\sigma\frac{\partial}{\partial x_\rho}, \cdot\right),$$
$z(\tilde{r})=\left(\tilde{r}^2 + \mu^2\right)^{\frac{1}{2}}$, $\tilde{r} = |x-\xi|$,
where $\psi^{(0)}(z, t)$ and $\psi^{(\rho\sigma)}(z, t)$ satisfies
\begin{equation}\label{heatequationdimension4}
\psi^{(0)}_t = \psi^{(0)}_{zz} + \frac{5\psi^{(0)}_z}{z} + \frac{p^{(0)}(t)}{z^4}, \quad p^{(0)}(t) = 2\mu(t)\dot{\mu}(t) ,
\end{equation}
\begin{equation}\label{heatequationdimension40}
\psi^{(\rho\sigma)}_t = \psi^{(\rho\sigma)}_{zz} + \frac{5\psi^{(\rho\sigma)}_z}{z} + \frac{p^{(\rho\sigma)}(t)}{z^4}, \quad p^{(\rho\sigma)}(t) = \dot \theta_{\rho\sigma}(t),
\end{equation}
which are the radially symmetric forms of an inhomogeneous linear heat equation in $\mathbb{R}^6$.
By the Duhamel's principle, we know that
\begin{equation*}
\begin{aligned}
&\psi^{(0)}(z, t) = \int_{t_0}^{t}\left(2\mu(\tilde{s})\dot{\mu}(\tilde{s})\right) k_1(t-\tilde{s}, z)d\tilde{s},
\end{aligned}
\end{equation*}
\begin{equation*}
\begin{aligned}
&\psi^{(\rho\sigma)}(z, t) = \int_{t_0}^{t}\left(\dot \theta_{\rho\sigma}(\tilde{s})\right) k_1(t-\tilde{s}, z)d\tilde{s}
\end{aligned}
\end{equation*}
provide bounded solutions for (\ref{heatequationdimension4}) and (\ref{heatequationdimension40}) respectively; here $k_1(t, z) = \frac{1-e^{-\frac{z^2}{4t}}(1+\frac{z^2}{4t})}{z^4}$. Then we define an improved approximation as
$$
A_{\mu, \xi, \theta}^* = A_{\mu, \xi, \theta} + \Phi_0 + \Phi_1.
$$
Now the linear error $\mathcal{E}^*$ of $A_{\mu, \xi, \theta}^*$ becomes 
\begin{equation*}
\begin{aligned}
\mathcal{E}^* & = -\left(B_{\mu, \xi}\right)_t\\
&\quad + (-\partial_t+\mathcal L_{B_{\mu, \xi}})\left(B_{1, q}+F_{B_{\mu,\xi}}\left(\theta_{\rho\sigma}(t)(x-\xi(t))_{\sigma}\frac{\partial}{\partial x_\rho},\cdot\right)+\Phi_0+\Phi_1\right)\\
&\quad +  (-\partial_t+\mathcal L_{B_{\mu, \xi}})\left(\left(\eta_0\left(\frac{|x-q|}{\sqrt{\mu(t)}}-1\right)\right)B_{\mu, \xi}\right)\\
&= \tilde{\mathcal L}_i[B_{1, q}]+\tilde{\mathcal L}_i[\Phi_0]+\tilde{\mathcal L}_i[\Phi_1^{(1)}]+\tilde{\mathcal L}_i[\Phi_1^{(2)}]+\tilde{\mathcal L}_i[\Phi_1^{(3)}] + \sum_{j=1}^4\dot\xi_j(t)Z^j(y)|_{y=\frac{x-\xi(t)}{\mu(t)}}\\
&\quad + dx\wedge d\bar x\left(\psi^{(\rho\sigma)}(z, t)\dot\xi(t)_\sigma\frac{\partial}{\partial x_\rho}, \cdot\right)\\
&\quad + dx\wedge d\bar x\left(\partial_z\psi^{(\rho\sigma)}(z, t)(x-\xi(t))_\sigma\frac{2(x-\xi)\cdot\dot\xi-2\mu\dot\mu}{z}\frac{\partial}{\partial x_\rho}, \cdot\right)\\
&\quad + F_{B_{\mu,\xi}}\left(\theta_{\rho\sigma}(t)\dot\xi(t)_{\sigma}\frac{\partial}{\partial y_\rho},\cdot\right) -\frac{\partial}{\partial \mu}F_{B_{\mu,\xi}}\dot\mu\left(\theta_{\rho\sigma}(t)(x-\xi(t))_{\sigma}\frac{\partial}{\partial x_\rho},\cdot\right)\\
&\quad -\frac{\partial}{\partial \xi}F_{B_{\mu,\xi}}\dot\xi\left(\theta_{\rho\sigma}(t)(x-\xi(t))_{\sigma}\frac{\partial}{\partial x_\rho},\cdot\right)-\left(\eta_0\left(\frac{|x-q|}{\sqrt{\mu(t)}}\right)\right)_tB_{\mu, \xi}\\
&\quad  +  \mathcal L_{B_{\mu, \xi}}\left(\left(1-\eta_0\left(\frac{|x-q|}{\sqrt{\mu(t)}}\right)\right)B_{\mu, \xi}\right)\\
&:= \sum_{i=1}^4\mathcal E^*_idx_i.
\end{aligned}
\end{equation*}
Here
$$
\Phi_1^{(1)} = dx\wedge d\bar x\left(\psi^{(12)}(z, t)(x-\xi(t))_2\frac{\partial}{\partial x_2}+\psi^{(34)}(z, t)(x-\xi(t))_4\frac{\partial}{\partial x_3}, \cdot\right),
$$
$$\Phi_1^{(2)} = dx\wedge d\bar x\left(\psi^{(13)}(z, t)(x-\xi(t))_2\frac{\partial}{\partial x_2}+\psi^{(24)}(z, t)(x-\xi(t))_4\frac{\partial}{\partial x_3}, \cdot\right),$$
$$\Phi_1^{(3)} = dx\wedge d\bar x\left(\psi^{(14)}(z, t)(x-\xi(t))_2\frac{\partial}{\partial x_2}+\psi^{(23)}(z, t)(x-\xi(t))_4\frac{\partial}{\partial x_3}, \cdot\right).$$
We refer the readers to the Appendix for  the computations of the terms $\tilde{\mathcal L}_i[B_{1, q}]$, $\tilde{\mathcal L}_i[\Phi_0]$, $\tilde{\mathcal L}_i[\Phi_1^{(1)}]$, $\tilde{\mathcal L}_i[\Phi_1^{(2)}]$, $\tilde{\mathcal L}_i[\Phi_1^{(3)}]$.

\subsection{The blow-up rate.}\label{blow-up-rate}
We compute
\begin{equation*}
\int_{\mathbb{R}^4}\sum_{i=1}^4\mathcal{E}^*_i\cdot Z^0_idx = 144\frac{1}{\mu(t)}\int_{t_0}^{t}\frac{\mu(\tilde{s})\dot{\mu}(\tilde{s})}{(t-\tilde{s})^2}\Omega\left(\frac{\mu(\tilde{s})^2}{t-\tilde{s}}\right) d\tilde{s} + \frac{24\pi^2}{\mu},
\end{equation*}
where
\begin{equation*}
\Omega(\tau)=\int_{0}^{+\infty}\Gamma\left(\tau\right)\frac{\rho^2}{(1+\rho^2)^4}\rho^3d\rho= \int_{0}^{+\infty}\frac{1-e^{-\tau\frac{\rho^2+1}{4}}(1+\tau\frac{\rho^2+1}{4})}{\tau^2(\rho^2+1)^2}\frac{\rho^2}{(1+\rho^2)^4}\rho^3d\rho.
\end{equation*}
The blow-up rate is determined by the equation,
\begin{equation*}
\int_{\mathbb{R}^4}\sum_{i=1}^4\mathcal{E}^*_i\cdot Z^0_idx\approx 0,
\end{equation*}
which reduces to
\begin{equation*}
144\frac{1}{\mu(t)}\int_{t_0}^{t}\frac{\mu(\tilde{s})\dot{\mu}(\tilde{s})}{(t-\tilde{s})^2}\Omega\left(\frac{\mu(\tilde{s})^2}{t-\tilde{s}}\right) d\tilde{s} + \frac{24\pi^2}{\mu}\approx 0.
\end{equation*}
We claim that by choosing $\mu = e^{-\kappa_0t}$ for suitable $\kappa_0 > 0$, we have
\begin{equation}\label{e:201803312}
144\int_{t_0}^{t}\frac{\mu(\tilde{s})\dot{\mu}(\tilde{s})}{(t-\tilde{s})^2}\Omega\left(\frac{\mu(\tilde{s})^2}{t-\tilde{s}}\right) d\tilde{s} = -144\Xi \kappa_0(1+o(1))
\end{equation}
for a constant $\Xi > 0$. Indeed, for a small constant $\delta > 0$, we decompose the integral $$\int_{t_0}^{t}\frac{\mu(\tilde{s})\dot{\mu}(\tilde{s})}{\left(t-\tilde{s}\right)^2}
\Omega\left(\frac{\mu(\tilde{s})^2}{t-\tilde{s}}\right)d\tilde{s}$$ into
\begin{equation*}
\begin{aligned}
&\int_{t_0}^{t}\frac{\mu(\tilde{s})\dot{\mu}(\tilde{s})}{\left(t-\tilde{s}\right)^2}
\Omega\left(\frac{\mu(\tilde{s})^2}{t-\tilde{s}}\right)d\tilde{s}= \int_{t_0}^{t-\delta}\frac{\mu(\tilde{s})\dot{\mu}(\tilde{s})}{\left(t-\tilde{s}\right)^2}
\Omega\left(\frac{\mu(\tilde{s})^2}{t-\tilde{s}}\right)d\tilde{s}\\
&\quad\quad\quad\quad\quad\quad\quad\quad\quad\quad\quad\quad\quad + \int_{t-\delta}^{t}\frac{\mu(\tilde{s})\dot{\mu}(\tilde{s})}{\left(t-\tilde{s}\right)^2}
\Omega\left(\frac{\mu(\tilde{s})^2}{t-\tilde{s}}\right)d\tilde{s} := I_1 + I_2.
\end{aligned}
\end{equation*}
For the term $I_1$, $t-\tilde{s} >\delta$, we have the following estimate
\begin{equation*}
\begin{aligned}
0\leq -I_1 &\leq \kappa_0\int_{t_0}^{t-\delta}\frac{\mu(\tilde{s})^2}{\left(t-\tilde{s}\right)^2}
\Omega\left(\frac{\mu(\tilde{s})^2}{t-\tilde{s}}\right)d\tilde{s}\leq C\frac{\kappa_0}{\delta}\int_{t_0}^{t-\delta}\left|\frac{(t-\tilde{s})^{\frac{1}{2}}}{\mu(\tilde{s})}\right|^{-2}d\tilde{s}\\
&= \frac{C}{\delta^2}\left(e^{-2\kappa_0 t_0 -e^{-2\kappa_0(t-\delta)}}\right)\leq \frac{C}{\delta^2}e^{-2\kappa_0 t_0}.
\end{aligned}
\end{equation*}
For the term $I_2 =\int_{t-\delta}^{t}\frac{\mu(\tilde{s})\dot{\mu}(\tilde{s})}{\left(t-\tilde{s}\right)^2}\Omega\left(\frac{\mu(\tilde{s})^2}{t-\tilde{s}}\right)d\tilde{s}$,
we use change of variables $\frac{(t-\tilde{s})^{\frac{1}{2}}}{\mu(\tilde{s})} = \hat{s}$, then it holds that
$$
d\tilde{s} = -\frac{\mu(\tilde{s})}{\frac{1}{2}(t-\tilde{s})^{-\frac{1}{2}}+\dot{\mu}(\tilde{s})\hat{s}}d\hat{s}
$$
and
\begin{equation*}
\begin{aligned}
I_2&=\int_{t-\delta}^{t}\frac{\mu(\tilde{s})\dot{\mu}(\tilde{s})}{\left(t-\tilde{s}\right)^2}
\Omega\left(\frac{\mu(\tilde{s})^2}{t-\tilde{s}}\right)d\tilde{s}\\ &=\int^{\frac{\delta^{\frac{1}{2}}}{\mu(t-\delta)}}_{0}\frac{\mu(\tilde{s})\dot{\mu}(\tilde{s})}{\left(t-\tilde{s}\right)^2}
\Omega\left(\frac{1}{\hat{s}^2}\right)\frac{\mu(\tilde{s})}{\frac{1}{2}(t-\tilde{s})^{-\frac{1}{2}}+\dot{\mu}(\tilde{s})\hat{s}}d\hat{s}.
\end{aligned}
\end{equation*}
Note that for $\delta > 0$ small enough, $\frac{1}{2}(t-\tilde{s})^{-\frac{1}{2}}+\dot{\mu}(\tilde{s})\hat{s} =\frac{1}{2}(t-\tilde{s})^{-\frac{1}{2}}(1-2\kappa_0)
> \frac{1}{2}(t-\tilde{s})^{-\frac{1}{2}}(1-2\kappa\delta)$, $d\tilde{s} = \frac{\mu(\tilde{s})}{\frac{1}{2}(t-\tilde{s})^{-\frac{1}{2}}}(1+O(\delta))d\hat{s}$, therefore it holds that
\begin{equation*}
I_2= -2\kappa_0\left(\int^{\frac{\delta^{\frac{1}{2}}}{\mu(t-\delta)}}_{0}\frac{1}{\hat{s}^3}\Omega\left(\frac{1}{\hat{s}^2}\right)
d\hat{s} + o(1)\right)
= -\Xi\kappa_0 + o(1)
\end{equation*}
if $\frac{\delta^{\frac{1}{2}}}{\mu(t-\delta)}$ is sufficiently large. Here $\Xi = \int_{0}^\infty \Omega(s)ds \in (0, \infty)$. Therefore we have
\begin{equation*}
\begin{aligned}
144\int_{t_0}^{t}\frac{\mu(\tilde{s})\dot{\mu}(\tilde{s})}{\left(t-\tilde{s}\right)^2}
\Omega\left(\frac{\mu(\tilde{s})^2}{t-\tilde{s}}\right)d\tilde{s}  &= -144\Xi\kappa_0(1+o(1))
\end{aligned}
\end{equation*}
if $t_0$ is large enough. This proves (\ref{e:201803312}).
From (\ref{e:201803312}), we know that one can choose the main order term of $\mu(t)$ as
$$
\mu_0 = e^{-\kappa_0 t} \text{ with }\kappa_0=\frac{\pi^2}{6\Xi}.
$$
Similarly, we have
\begin{equation*}
\dot \xi \approx 0, \quad \dot \theta_{\rho\sigma}(t)\approx 0.
\end{equation*}
Therefore, we choose
we choose $\xi_0 = 0$ and $\theta^0_{\rho\sigma}(t) = 0$. Therefore, we have
\begin{lemma}
The main order terms of $\mu(t)$, $\xi$ and $\theta_{\rho\sigma}$ are
$$
\mu_0 = e^{-\kappa_0 t} \text{ with }\kappa_0=\frac{\pi^2}{6\Xi}
$$
and 
$$\xi_0 = 0,\quad \theta^0_{\rho\sigma}(t) = 0.$$
\end{lemma}

%\subsection{The final ansatz.}
Let us fix the parameter functions $\mu_0(t)$, $\xi_0(t)$ defined above. Then we write
$$
\mu(t) = \mu_0(t) + \lambda(t).
$$
We will find a small solution $\varphi$ of
\begin{equation}\label{e:modifiedequation}
\mathcal{E}^* -\partial_t\varphi + \mathcal L_{B_{\mu, \xi}}(\varphi)+N[A_{\mu, \xi, \theta}^*-B_{\mu, \xi}+\varphi] = 0
\end{equation}
with $A_{\mu, \xi, \theta}$ defined in (\ref{e:ansatz}). In other words, let $t_0 > 0$, the connection
$$
A(x, t) = A_{\mu, \xi, \theta}^*(x, t) + \varphi(x, t)
$$
will solve the problem
\begin{equation*}\label{gaugedYM}
\left\{\begin{array}{ll}
        \frac{\partial A}{\partial t} = -D_A^*F_A + D_{A}D_{A}^*\left(A_{\mu, \xi, \theta}^*-B_{\mu, \xi}+\varphi\right)\quad\text{ in }\mathbb{R}^4\times [t_0, \infty),\\
        A(\cdot, t_0) = A_0\quad\text{ in }\mathbb{R}^4
       \end{array}
\right.
\end{equation*}
when $t_0$ is sufficiently large. Then we use the Donaldson-De Turck trick described at the end  of the introduction to obtain a solution of (\ref{e:Yangmillsheatflow}).

\subsection{The inner-outer gluing system.}
Let $\eta_0(s)$ be a smooth cut-off function satisfying $\eta_0(s) = 1$ for $s < 1$ and $\eta_0(s) = 0$ for $s > 2$. We define a sufficiently large constant of form
\begin{equation}\label{definitionofR}
R = e^{\rho t_0}
\end{equation}
for a sufficiently small positive real number $\rho$. Set
$$
\eta_R(x, t):=\eta_0\left(\frac{\left|x -\xi(t)\right|}{R\mu_{0}(t)}\right).
$$
We consider $\varphi(x, t)$ with following form
\begin{equation}\label{e:errorterm}
\varphi(x, t) = \eta_R \tilde{\phi}(x, t) + \psi(x, t)
\end{equation}
for a  1-form $\tilde{\phi}(x, t) = \phi\left(\frac{x-\xi(t)}{\mu_0(t)}, t\right)$ and $\phi(\cdot, t_0) = 0$.
Let us recall that (\ref{e:modifiedequation}) can be expressed explicitly by
\begin{equation*}
\begin{aligned}
\partial_t\varphi_j & =\Delta\varphi_j  + \sum_{i = 1}^4[\partial_i B_i, \varphi_j]\\
&\quad + \sum_{i = 1}^4[\varphi_i, \partial_i B_j] + \sum_{i = 1}^4[B_i, \partial_i\varphi_j]  + \sum_{i = 1}^4[\varphi_i, \partial_i B_j - \partial_j B_i + [B_i, B_j]]\\
&\quad + \sum_{i = 1}^4[B_i, \partial_i\varphi_j - \partial_j\varphi_i + [\varphi_i, B_j]+[B_i, \varphi_j]]+\sum_{i = 1}^4[B_j, [B_i, \varphi_i]]\\
&\quad + \sum_{i = 1}^4 \partial_j[B_i, \varphi_i] + N_j[A_{\mu, \xi, \theta}^*-B_{\mu, \xi}+\varphi] + \mathcal{E}^*_j
\end{aligned}
\end{equation*}
with
\begin{equation*}
\begin{aligned}
\mathcal{E}^* & = -\left(B_{\mu, \xi}\right)_t\\
&\quad + (-\partial_t+\mathcal L_{B_{\mu, \xi}})\left(B_{1, q}+F_{B_{\mu,\xi}}\left(\theta_{\rho\sigma}(t)(x-\xi(t))_{\sigma}\frac{\partial}{\partial x_\rho},\cdot\right)+\Phi_0+\Phi_1\right)\\
&\quad +  (-\partial_t+\mathcal L_{B_{\mu, \xi}})\left(\left(\eta_0\left(\frac{|x-q|}{\sqrt{\mu(t)}}-1\right)\right)B_{\mu, \xi}\right)\\
&: = \sum_{i=1}^4\mathcal{E}^*_i dx_i.
\end{aligned}
\end{equation*}
and
\begin{align}
&N_j[A_{\mu, \xi, \theta}^*-B_{\mu, \xi}+\varphi]\notag\\
& : = \sum_{i = 1}^4\left([\phi_i, \partial_i\phi_j]+[\partial_i\phi_i, \phi_j]\right)\notag\\
&\quad + \sum_{i = 1}^4[\varphi_i, \partial_i\varphi_j - \partial_j\varphi_i + [\varphi_i, A_{\mu, \xi, \theta, j}-B_{\mu, \xi, j}+\Phi_{0, j}+\Phi_{1, j}]]\notag\\
&\quad + \sum_{i = 1}^4[\varphi_i, \partial_i\varphi_j - \partial_j\varphi_i + [A_{\mu, \xi, \theta, i}-B_{\mu, \xi, i}+\Phi_{0, i}+\Phi_{1, i}, \varphi_j]]\label{nonlineatterm}\\
\end{align}
\begin{align*}
&\quad + \sum_{i = 1}^4[A_{\mu, \xi, \theta, i}-B_{\mu, \xi, i}+\Phi_{0, i}+\Phi_{1, i}, [\varphi_i, \varphi_j]] + \sum_{i = 1}^4[\varphi_i, [\varphi_i, \varphi_j]].\notag
\end{align*}

Then $\varphi$ defined in (\ref{e:errorterm}) solves (\ref{gaugedYM}) if the pair $(\phi, \psi)$ satisfies the following system of parabolic equations
\begin{equation}\label{e:innerproblembeforechangeofvariables2}
\begin{aligned}
&\mu_{0}^2(t)\partial_t \phi_j\\ 
& = \Delta \phi_j + \sum_{i = 1}^4[\phi_i, \partial_i B_j] + \sum_{i = 1}^4[B_i, \partial_i\phi_j]  + \sum_{i = 1}^4[\phi_i, \partial_i B_j - \partial_j B_i + [B_i, B_j]]\\
&\quad + \sum_{i = 1}^4[B_i, \partial_i\phi_j - \partial_j\phi_i + [\phi_i, B_j]+[B_i, \phi_j]] + \partial_j\sum_{i = 1}^4[B_i, \phi_i] + \sum_{i = 1}^4[B_j, [B_i, \phi_i]]\\
&\quad + \mu_{0}^3\sum_{i = 1}^4[\psi_i, \partial_i B_j] + \mu_{0}^3\sum_{i = 1}^4[B_i, \partial_i\psi_j] + \mu_{0}^3\sum_{i = 1}^4[\psi_i, \partial_i B_j - \partial_j B_i + [B_i, B_j]]\\
&\quad + \mu_{0}^3\sum_{i = 1}^4[B_i, \partial_i\psi_j - \partial_j\psi_i + [\psi_i, B_j]+[B_i, \psi_j]] + \mu_{0}^3\partial_j\sum_{i = 1}^4[B_i, \psi_i]\\
&\quad + \mu_{0}^3\sum_{i = 1}^4[B_j, [B_i, \psi_i]] + \mu_{0}^3\mathcal E^*_j(\xi+\mu_{0}y, t)\text{ in }B_{2R}\times [t_0, +\infty),\\
\end{aligned}
\end{equation}
and
\begin{equation}\label{e:outerproblem}
\begin{aligned}
&\partial_t \psi_j =  \Delta \psi_j + (1-\eta_R)\sum_{i = 1}^4[\psi_i, \partial_i B_{\mu, \xi, j}] + (1-\eta_R)\sum_{i = 1}^4[B_{\mu, \xi, i}, \partial_i\psi_j]\\
&\quad\quad\quad + (1-\eta_R)\sum_{i = 1}^4[\psi_i, \partial_i B_{\mu, \xi, j} - \partial_j B_{\mu, \xi, i} + [B_{\mu, \xi, i}, B_{\mu, \xi, j}]]\\
&\quad\quad\quad + (1-\eta_R)\sum_{i = 1}^4[B_{\mu, \xi, i}, \partial_i\psi_j - \partial_j\psi_i + [\psi_i, B_{\mu, \xi, j}]+[B_{\mu, \xi, i}, \psi_j]]\\
&\quad\quad\quad + (1-\eta_R)\partial_j\sum_{i = 1}^4[B_{\mu, \xi, i}, \psi_i] + (1-\eta_R)\sum_{i = 1}^4[B_{\mu, \xi, j}, [B_{\mu, \xi, i}, \psi_i]]\\
&\quad\quad\quad + N_j[A_{\mu, \xi, \theta}^*-B_{\mu, \xi}+\varphi] + (1-\eta_R)\mathcal E^*_j(x, t)\\
&\quad\quad\quad + \nabla\eta_{R}\nabla\tilde{\phi}_j+ \tilde{\phi}_j\big(\Delta -\partial_t\big)\eta_R\text{ in }\mathbb{R}^4\times [t_0, +\infty).
\end{aligned}
\end{equation}
for $j = 1,\cdots, 4$.

\section{Proof of the main theorem}
\subsection{The inner problem.}
To find a pair of solutions $(\phi, \psi)$ satisfying the inner problem (\ref{e:innerproblembeforechangeofvariables2}) and the outer problem (\ref{e:outerproblem}), we rewrite the inner problem (\ref{e:innerproblembeforechangeofvariables2}) as
\begin{eqnarray}\label{e5:1}
&& \mu_0^2(t)\partial_t \phi_j = \mathcal L_j[\phi] + H_j[\lambda,\xi, \theta, \dot{\lambda},\dot{\xi}, \dot{\theta}, \phi, \psi](y,t),\,\, y\in B_{2R}(0)
\end{eqnarray}
for $j = 1, 2, 3, 4$ and $ t\geq t_0$, where $H_j[\lambda,\xi, \theta, \dot{\lambda},\dot{\xi}, \dot{\theta}, \phi, \psi](y,t)$ is defined by
\begin{equation}\label{e5:2}
\begin{aligned}
H_j[\lambda,\xi, \theta, \dot{\lambda},\dot{\xi}, \dot{\theta}, \phi, \psi]&:=\mu_{0}^3\sum_{i = 1}^4[\psi_i, \partial_i B_j] + \mu_{0}^3\sum_{i = 1}^4[B_i, \partial_i\psi_j]\\
&\quad + \mu_{0}^3\sum_{i = 1}^4[\psi_i, \partial_i B_j - \partial_j B_i + [B_i, B_j]]\\
&\quad + \mu_{0}^3\sum_{i = 1}^4[B_i, \partial_i\psi_j - \partial_j\psi_i + [\psi_i, B_j]+[B_i, \psi_j]]\\
&\quad + \mu_{0}^3\partial_j\sum_{i = 1}^4[B_i, \psi_i] + \mu_{0}^3\sum_{i = 1}^4[B_j, [B_i, \psi_i]]\\
&\quad + \mu_{0}^3\mathcal E^*_j(\xi+\mu_{0}y, t).
\end{aligned}
\end{equation}
We use change of variables
\begin{equation*}\label{e5:3}
t = t(\tau),\quad \frac{dt}{d\tau} = \mu_{0}^{2}(t);
\end{equation*}
it is easy to see that the inner problem (\ref{e5:1}) becomes
\begin{eqnarray}\label{e5:4}
\partial_\tau \phi_j = \mathcal L_j[\phi] + H_j[\lambda,\xi, \theta, \dot{\lambda},\dot{\xi}, \dot{\theta}, \phi, \psi](y,t(\tau))
\end{eqnarray}
for $y\in B_{2R}(0)$, $\tau\geq \tau_0$. Here $\tau_0$ the unique positive number satisfying $t(\tau_0) = t_0$.
We will find a solution $\phi$ to the following problem
\begin{equation}\label{e5:600}
\left\{
\begin{aligned}
&\partial_\tau \phi_j = \mathcal L_j[\phi] + H_j[\lambda,\xi, \theta, \dot{\lambda},\dot{\xi}, \dot{\theta}, \phi, \psi](y,t(\tau)),\,\, y\in B_{2R}(0),\,\,\tau\geq\tau_0,\\
&\phi_j(y,\tau_0) = 0,\,\, y\in B_{2R}(0),\,\,j = 1, 2, 3, 4.
\end{aligned}
\right.
\end{equation}
We will prove that problem (\ref{e5:600}) is solvable for $\phi$ when $\psi$ is in some weighted spaces and the parameters $\lambda$, $\xi$, $\theta$ are chosen so that the right hand side $$H_j[\lambda,\xi, \theta, \dot{\lambda},\dot{\xi}, \dot{\theta}, \phi, \psi](y,t(\tau))$$  of (\ref{e5:600}) satisfies the following $L^2$-orthogonality conditions
\begin{equation}\label{e5:7}
\int_{B_{2R}}\sum_{i=1}^4H_i[\lambda,\xi, \theta, \dot{\lambda},\dot{\xi}, \dot{\theta}, \phi, \psi](y,t(\tau))Z^l_i(y)dy = 0,
\end{equation}
for all $\tau\geq \tau_0$, $l = 0,1,2,\cdots,7$. To obtain a solution $\phi$, we  apply the Schauder fixed-point theorem. First, we need a linear theory for problem (\ref{e5:600}).

For $R > 0$, let us consider the following initial value problem
\begin{equation}\label{e5:31}
\left\{
\begin{aligned}
&\partial_\tau \phi_j = \mathcal L_j[\phi] + h_j(y,\tau),\,\,y\in B_{2R}(0),\,\,\tau\geq \tau_0,\\
&\phi(y,\tau_0) = 0.
\end{aligned}
\right.
\end{equation}
We define the weighted norm for a differential form $h = \sum_{j=1}^4h_jdx_j$ as follows,
\begin{equation*}
\|h\|_{\alpha, \nu} = \sum_{j=1}^4\|h_j\|_{\alpha, \nu}\text{ with }\|h_j\|_{\alpha, \nu}: = \sup_{\tau > \tau_0}\sup_{y\in B_{2R}}\tau^\nu(1 + |y|^\alpha)|h_j(y,\tau)|.
\end{equation*}
Then we have the following estimates for problem (\ref{e5:31}).
\begin{prop}\label{proposition5.1}
Suppose $\alpha>0$, $\nu > 0$, $\|h\|_{3+\alpha,\nu} < +\infty$ and
\begin{equation*}\label{orthogalcondition}
\int_{B_{2R}}\sum_{i=1}^4h_i(y)Z^l_i(y)dy = 0~\text{ for all }~\tau\in (\tau_0,\infty), ~l = 0, 1, \cdots, 7.
\end{equation*}
Then there exist a differential 1-form $\phi = \phi[h](y, \tau)$ satisfying problem (\ref{e5:31}). For $\tau\in (\tau_0,+\infty)$, $y\in B_{2R}(0)$, it holds that
\begin{equation}\label{e5:100}
\begin{aligned}
&(1+|y|)|\nabla_y \phi_j(y, \tau)|+ |\phi_j(y,\tau)|\lesssim \tau^{-\nu}(1+|y|)^{-1-\alpha}\|h\|_{3+\alpha,\nu},\,\,j=1, 2, 3, 4.
\end{aligned}
\end{equation}
\end{prop}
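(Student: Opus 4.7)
My plan is to prove Proposition \ref{proposition5.1} in three stages: first set up existence on finite time intervals by classical parabolic theory, then establish the pointwise decay estimate (\ref{e5:100}) by a blow-up/contradiction argument that exploits the orthogonality (\ref{e5:7}) and the classification of kernel elements $Z^0,\ldots,Z^7$ given just before the proposition, and finally pass to the limit $T\to\infty$.

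For the first stage, observe that thanks to the Donaldson--De Turck correction recalled in the introduction, $\mathcal{L}$ is a smooth strongly elliptic second-order operator on $B_{2R}(0)$ with coefficients that are independent of $\tau$ (they depend on the BPST instanton $B=B_{1,0}$ only). Hence, for any $h$ with $\|h\|_{3+\alpha,\nu}<\infty$ and natural Dirichlet-type boundary data on $\partial B_{2R}$, problem (\ref{e5:31}) has a unique smooth solution on every finite time interval $[\tau_0,T]$ by classical parabolic Schauder theory. The real content is therefore the $T$-uniform estimate (\ref{e5:100}).

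For the second stage, I would argue by contradiction: assume there exist sequences $h^{(n)}$ satisfying the orthogonality (\ref{e5:7}), with corresponding solutions $\phi^{(n)}$ to (\ref{e5:31}), such that $\|h^{(n)}\|_{3+\alpha,\nu}\to 0$ while
\begin{equation*}
\sum_{j=1}^4\sup_{\tau>\tau_0}\sup_{y\in B_{2R}}\tau^\nu(1+|y|)^{1+\alpha}\bigl[\,|\phi^{(n)}_j(y,\tau)|+(1+|y|)\,|\nabla_y\phi^{(n)}_j(y,\tau)|\,\bigr]=1,
\end{equation*}
the supremum being attained (or nearly so) at a point $(y_n,\tau_n)$. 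I would then distinguish three regimes: (i) $|y_n|$ bounded and $\tau_n$ bounded, (ii) $|y_n|$ bounded with $\tau_n\to\infty$, and (iii) $|y_n|\to\infty$. In the two bounded-$y$ regimes, parabolic Schauder estimates together with the weighted normalization allow extraction of a smooth nontrivial limit $\phi_\infty$ on either $\mathbb{R}^4\times[0,\infty)$ or the ancient half-space $\mathbb{R}^4\times\mathbb{R}$ satisfying
\begin{equation*}
\partial_\tau\phi_\infty=\mathcal{L}[\phi_\infty],\qquad |\phi_\infty(y,\tau)|\lesssim (1+|y|)^{-1-\alpha},
\end{equation*}
together with the orthogonality $\int_{\mathbb{R}^4}\sum_i\phi_{\infty,i}Z^l_i=0$ for $l=0,\ldots,7$ inherited in the limit. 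Combining the Liouville classification recalled before the proposition (the $Z^l$ exhaust the bounded kernel of $\mathcal{L}$, cf. Proposition 2.7 of \cite{Brendle2003}) with the Weitzenb\"ock identity $\mathcal{L}=\nabla_B^*\nabla_B-2\ast[\ast F_B,\cdot]$ stated in the paper, one reads off that $-\mathcal{L}$ is non-negative self-adjoint and coercive on the $L^2$-orthogonal complement of $\mathrm{span}\{Z^l\}$, which forces $\phi_\infty\equiv 0$ in both regimes (i) and (ii). In regime (iii), the coefficients of $\mathcal{L}-\Delta$ decay like $|y|^{-2}$ (inspection of the explicit formulas for $B_i$), so rescaling around $y_n$ yields a nontrivial bounded solution of the constant-coefficient heat equation $\partial_\tau\phi_\infty=\Delta\phi_\infty$ on $\mathbb{R}^4$ whose source term vanishes in the limit (thanks to the two-derivative gain $-3-\alpha\to -1-\alpha$); the normalization produces the required contradiction via a standard rescaling argument.

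For the third stage, once (\ref{e5:100}) is known uniformly in $T$, I would take $T\to\infty$, extract a subsequential $C^{2,1}_{\mathrm{loc}}$ limit via interior Schauder estimates, and verify that the limit solves (\ref{e5:31}) and inherits the bound. The gradient estimate in (\ref{e5:100}) is obtained from the $L^\infty$ bound on $\phi$ by a standard rescaling argument in the parabolic cylinder $\{|y-y_0|\leq (1+|y_0|)/2\}\times[\tau-1,\tau]$, which yields the factor $(1+|y|)^{-2-\alpha}$ for $|\nabla_y\phi|$. The main obstacle I expect is the Liouville step in regime (ii): ruling out ancient bounded solutions requires not merely the elliptic non-degeneracy of \cite{Brendle2003} but the spectral gap of $-\mathcal{L}$ on the $L^2$-orthogonal complement of $\{Z^l\}$, which gives exponential $L^2$ decay as $\tau\to-\infty$ for any such ancient solution, incompatible with the normalization. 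This coercivity is the essential mechanism turning the finite-dimensional kernel of $\mathcal{L}$ into a solvable theory for (\ref{e5:31}).
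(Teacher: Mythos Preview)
Your overall strategy---blow-up/contradiction with regimes depending on $|y_n|$ and $\tau_n$, then Liouville---is indeed the paper's, but two steps are not set up correctly.

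First, the paper does \emph{not} solve (\ref{e5:31}) on $B_{2R}$ with Dirichlet data; it extends $h$ by zero and solves the Cauchy problem on all of $\mathbb{R}^4$ with decay at infinity (see (\ref{e:linearproblem}) and the proof at the end of Section~4). This is not cosmetic: working on $\mathbb{R}^4$ eliminates boundary effects and makes the constant in (\ref{e5:100}) automatically uniform in $R$, which is essential for the gluing. Your Dirichlet setup would create a separate boundary regime ($y_n\to\partial B_{2R}$) that you do not treat. Relatedly, the orthogonality of the \emph{solution} $\phi$ (not just $h$) to each $Z^l$---which you need in regime~(ii) and describe as ``inherited in the limit''---is in the paper a preliminary lemma (Lemma~\ref{l5555}) proved by testing against cut-off $Z^l$'s on $\mathbb{R}^4$; this argument is less clean with a hard boundary at $\partial B_{2R}$.

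Second, regime (iii) is where the real difficulty sits, and your description misses it. With the paper's rescaling $\tilde\phi^n(z,\tau)=(\tau_2^n)^\nu|y_n|^{1+\alpha}\phi^n(|y_n|z,\,|y_n|^{-2}\tau+\tau_2^n)$, the limit solves the free heat equation but only on $(\mathbb{R}^4\setminus\{0\})\times(-\infty,0]$, and it obeys the \emph{singular} bound $|\tilde\phi(z,\tau)|\le|z|^{-1-\alpha}$. This is not a bounded solution on $\mathbb{R}^4$, and eliminating it is not a ``standard rescaling argument'': the paper invokes a Liouville theorem for ancient caloric functions with an isolated singular point (Theorem~5.1 of \cite{LiDongSire}). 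If instead you only translate around $y_n$ without rescaling, the limit is a bounded ancient caloric function on $\mathbb{R}^4$, hence constant---but you then have no way to force that constant to be zero, since all spatial decay has been lost in the limit. Either route requires a sharp Liouville statement that you have not identified; this is the missing ingredient.
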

The proof of Proposition \ref{proposition5.1} will be given in Section 4. Assuming that
\begin{equation*}
\|\psi\|_{**,1+\sigma,1+\alpha}\leq ce^{-\epsilon t_0}
\end{equation*}
for some small $\epsilon > 0$. Here $\|\psi\|_{**,1+\sigma,1+\alpha}$ is the least $M > 0$ such that
\begin{equation}\label{e4:400}
|\psi(x, t)|\leq M
\left\{
\begin{aligned}
&\frac{\mu_{0}^{1+\sigma}}{1+|y|^{1+\alpha}}, \quad |y|=\left|\frac{x-\xi}{\mu_{0}}\right|\leq \mu^{-1}_{0},\\
&\mu_{0}^{1+\sigma+1+\alpha}, \quad |y|=\left|\frac{x-\xi}{\mu_{0}}\right| > \mu^{-1}_{0}\\
\end{aligned}
\right.
\end{equation}
holds. Then we have the following estimates for $H_j[\lambda,\xi, \theta, \dot{\lambda},\dot{\xi}, \dot{\theta}, \phi, \psi](y,t)$ in the inner problem (\ref{e5:600}).
\begin{enumerate}
\item[(1)]
\begin{equation}
\begin{aligned}
\left|\mu_{0}^3\sum_{i = 1}^4[\psi_i, \partial_i B_j - \partial_j B_i + [B_i, B_j]]\right|\lesssim \|\psi\|_{**,1+\sigma,1+\alpha}\frac{\mu_{0}^{2+\sigma}}{1+|y|^{3+\alpha}}.
\end{aligned}
\end{equation}
\item[(2)]
\begin{equation}
\begin{aligned}
\left|\mu_{0}^3\mathcal E^*_j(\xi+\mu_{0}y, t)\right|\lesssim \frac{\mu_{0}^{3}}{1+|y|^{4}}\lesssim e^{-\epsilon t_0}\frac{\mu_{0}^{2+\sigma}}{1+|y|^{3+\alpha}}.
\end{aligned}
\end{equation}
\item[(3)]
\begin{equation}
\begin{aligned}
\left|\mu_{0}^3\sum_{i = 1}^4[\psi_i, \partial_i B_j]\right|&\lesssim \|\psi\|_{**,1+\sigma,1+\alpha}\frac{1}{1+|y|^2}\frac{\mu_{0}^{2+\sigma}}{1+|y|^{1+\alpha}}\\
&\lesssim \|\psi\|_{**,1+\sigma,1+\alpha}\frac{1}{1+|y|^{3+\alpha}}\mu_{0}^{2+\sigma}.
\end{aligned}
\end{equation}
\item[(4)]
\begin{equation}
\begin{aligned}
\left|\mu_{0}^3\sum_{i = 1}^4[B_i, \partial_i\psi_j]\right|&\lesssim \|\psi\|_{**,1+\sigma,1+\alpha}\frac{1}{1+|y|}\frac{\mu_{0}^{2+\sigma}}{1+|y|^{2+\alpha}}\\
&\lesssim \|\psi\|_{**,1+\sigma,1+\alpha}\frac{1}{1+|y|^{3+\alpha}}\mu_{0}^{2+\sigma}.
\end{aligned}
\end{equation}
\item[(5)]
\begin{equation}
\begin{aligned}
&\left|\mu_{0}^3\sum_{i = 1}^4[B_i, \partial_i\psi_j - \partial_j\psi_i + [\psi_i, B_j]+[B_i, \psi_j]]\right|\\
&\lesssim \|\psi\|_{**,1+\sigma,1+\alpha}\frac{1}{1+|y|^2}\frac{\mu_{0}^{2+\sigma}}{1+|y|^{1+\alpha}} + \|\psi\|_{**,1+\sigma,1+\alpha}\frac{1}{1+|y|}\frac{\mu_{0}^{2+\sigma}}{1+|y|^{2+\alpha}}\\
&\lesssim \|\psi\|_{**,1+\sigma,1+\alpha}\frac{1}{1+|y|^{3+\alpha}}\mu_{0}^{2+\sigma}.
\end{aligned}
\end{equation}
\item[(6)]
\begin{equation}
\begin{aligned}
&\left|\mu_{0}^3\partial_j\sum_{i = 1}^4[B_i, \psi_i]\right|\\
&\lesssim \|\psi\|_{**,1+\sigma,1+\alpha}\frac{1}{1+|y|^2}\frac{\mu_{0}^{2+\sigma}}{1+|y|^{1+\alpha}} + \|\psi\|_{**,1+\sigma,1+\alpha}\frac{1}{1+|y|}\frac{\mu_{0}^{2+\sigma}}{1+|y|^{2+\alpha}}\\
&\lesssim \|\psi\|_{**,1+\sigma,1+\alpha}\frac{1}{1+|y|^{3+\alpha}}\mu_{0}^{2+\sigma}.
\end{aligned}
\end{equation}
\item[(7)]
\begin{equation}
\begin{aligned}
\left|\mu_{0}^3\sum_{i = 1}^4[B_j, [B_i, \psi_i]]\right|&\lesssim \|\psi\|_{**,1+\sigma,1+\alpha}\frac{1}{1+|y|^2}\frac{\mu_{0}^{2+\sigma}}{1+|y|^{1+\alpha}}\\
&\lesssim \|\psi\|_{**,1+\sigma,1+\alpha}\frac{1}{1+|y|^{3+\alpha}}\mu_{0}^{2+\sigma}.
\end{aligned}
\end{equation}
\end{enumerate}

\subsection{The orthogonality conditions.}
To apply Proposition \ref{proposition5.1}, we choose the parameters $\lambda$, $\xi$ and $r$ satisfying the orthogonality conditions (\ref{e5:7}).
Fix a $\sigma\in (0, 1)$, for a functional $h(t):(t_0, \infty)\to\mathbb{R}^k$ and positive number $\delta > 0$, the weighted $L^\infty-$norm is defined as follows,
\begin{equation*}\label{e4:30}
\|h\|_\delta:=\|\mu_0(t)^{-\delta}h(t)\|_{L^\infty(t_0, \infty)}.
\end{equation*}
In the following, $\alpha > 0$ will always be a small constant.
We also assume the parameters $\lambda$, $\xi$, $\theta$, $\dot{\lambda}$, $\dot{\xi}$ and $\dot{\theta}$ belong to the following sets,
\begin{equation}\label{e4:21}
\|\dot{\lambda}(t)\|_{1+\sigma} + \|\dot{\xi}(t)\|_{1+\sigma} + \|\dot{\theta}(t)\|_{1+\sigma}\leq c,
\end{equation}
\begin{equation}\label{e4:22}
\|\lambda(t)\|_{1+\sigma} + \|\xi(t)-q\|_{1+\sigma} + \|\theta(t)\|_{1+\sigma}\leq c,
\end{equation}
here $c > 0$ is a constant independent of $R$, $t$ and $t_0$. We define the norm $\|\phi\|_{1+\alpha,1+\sigma}$ of $\phi$ as the least number $M>0$ such that the following estimate
\begin{equation}\label{e4:24}
(1+|y|)|\nabla_{y} \phi_j(y, t)| + |\phi_j(y, t)|\leq M\frac{\mu_{0}^{2+\sigma}}{1+|y|^{1+\alpha}}\text{ for }j=1, 2, 3, 4
\end{equation}
holds. For some small $\epsilon > 0$, we also suppose $\phi$ and $\psi$ satisfy the constraints
\begin{equation}\label{e4:25}
\|\phi\|_{1+\alpha,2+\sigma}\leq ce^{-\epsilon t_0}
\end{equation}
and
\begin{equation*}
\|\psi\|_{**,1+\sigma,1+\alpha}\leq ce^{-\epsilon t_0},
\end{equation*}
respectively. Then we have the following result.
\begin{prop}\label{l5:1}
The orthogonality conditions (\ref{e5:7}) are equivalent to the system
\begin{equation}\label{e5:9}
\left\{
\begin{aligned}
&\dot{\lambda} + 2\kappa_0\lambda = \Pi_0[\lambda,\xi, \theta, \dot{\lambda},\dot{\xi}, \dot{\theta}, \phi, \psi](t),\\
&\dot{\xi}_l = \Pi_l[\lambda,\xi, \theta, \dot{\lambda},\dot{\xi}, \dot{\theta}, \phi, \psi](t), \quad l = 1,\cdots, 4,\\
&\dot{\theta}_{12} = \mu_0^{-1}\Pi_{5}[\lambda,\xi, \theta, \dot{\lambda},\dot{\xi}, \dot{\theta}, \phi, \psi](t),\\
&\dot{\theta}_{13} = \mu_0^{-1}\Pi_{6}[\lambda,\xi, \theta, \dot{\lambda},\dot{\xi}, \dot{\theta}, \phi, \psi](t),\\
&\dot{\theta}_{14} = \mu_0^{-1}\Pi_{7}[\lambda,\xi, \theta, \dot{\lambda},\dot{\xi}, \dot{\theta}, \phi, \psi](t).
\end{aligned}
\right.
\end{equation}
Here $\kappa_0 = \frac{\pi^2}{6\Xi} > 0$; the right hand side terms of (\ref{e5:9}) can be written as
\begin{equation*}
\begin{aligned}
&\Pi_l[\lambda,\xi, \theta, \dot{\lambda},\dot{\xi}, \dot{\theta}, \phi, \psi](t)\\
&= e^{-\epsilon t_0}\mu_0^{1+\sigma}(t)f_l(t) + e^{-\epsilon t_0}\Theta_l\left[\dot{\lambda},\dot{\xi}, \mu_0\dot{\theta}, \lambda, (\xi-q), \mu_0\theta, \phi, \mu_0\psi\right](t),
\end{aligned}
\end{equation*}
for $l = 0, 1, \cdots, 7$, where $f_l(t)$ and $\Theta_l[\cdots](t)$ ($l = 0, \cdots, 7$) are bounded smooth functions for $t\in [t_0,\infty)$.
\end{prop}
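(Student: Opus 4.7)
The plan is to compute each orthogonality integral $\int_{B_{2R}}\sum_{i=1}^4 H_i Z^l_i\,dy$ for $l=0,1,\dots,7$ by substituting the explicit decomposition (\ref{e5:2}) of $H_j$ and isolating the linear dependence on the modulation parameters. The seven contributions to $H_j$ split naturally into: (a) the rescaled error $\mu_0^3 \mathcal{E}^*_j(\xi+\mu_0 y,t)$, which, by the improved approximation of \S 2.4, contains the principal linear response in $\dot\mu Z^0$, $\sum \dot\xi_j Z^j$, and the rotation-type terms $F_{B_{\mu,\xi}}(\dot\theta_{\rho\sigma}(x-\xi)_\sigma\partial_{y_\rho},\cdot)$; and (b) the cross-terms that couple $\psi$ (and $\phi$) with $B$, each of which is controlled by one of the bounds (1)--(7) from the previous subsection and is therefore uniformly small of order $e^{-\epsilon t_0}\mu_0^{2+\sigma}(1+|y|)^{-3-\alpha}$.

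For $l=0$, pairing against $Z^0$ reproduces exactly the Pohozaev-type computation of \S 2.5: the principal term yields $144\mu^{-1}\int_{t_0}^{t}\mu\dot\mu(t-\tilde s)^{-2}\Omega(\mu^2/(t-\tilde s))d\tilde s-\frac{24\pi^2}{\mu}$, whose linearization in $\mu=\mu_0+\lambda$ around the choice $\kappa_0=-\pi^2/(6\Xi)$ produces on the left the combination $\dot\lambda+2\kappa_0\lambda$. For $l=1,\dots,4$ the translation kernels $Z^l$ have the faster decay $|y|^{-4}$, so the pairings $\int Z^j_i Z^k_i dy$ form an invertible diagonal matrix (by the manifest rotational/quaternionic symmetry of the $Z^j$'s), producing the scalar equations $\dot\xi_l=\Pi_l$. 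For $l=5,6,7$ the gauge kernels decay only like $|y|^{-3}$; the matched pairing isolates $\dot\theta_{12},\dot\theta_{13},\dot\theta_{14}$ against $F_{B_{\mu,\xi}}(\dot\theta_{\rho\sigma}(x-\xi)_\sigma \partial_{y_\rho},\cdot)$, and a change of variables $y=(x-\xi)/\mu_0$ produces exactly the factor $\mu_0^{-1}$ appearing on the right-hand side of the last three equations in (\ref{e5:9}).

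To assemble the functionals $\Pi_l$, I would split every non-principal contribution into the two pieces stated: the first, $e^{-\epsilon t_0}\mu_0^{1+\sigma}(t)f_l(t)$, collects those terms of $\mathcal{E}^*$ that do not depend on the unknowns (after the improvement by $\Phi_0,\Phi_1$ their size is $\mu_0^{1+\sigma}(1+|y|)^{-4}$, integrating to $\mu_0^{1+\sigma}$ times a bounded smooth function of $t$); the second, $e^{-\epsilon t_0}\Theta_l[\dot\lambda,\dot\xi,\mu_0\dot\theta,\lambda,\xi-q,\mu_0\theta,\phi,\mu_0\psi]$, gathers the couplings $[\psi,B]$, $[B,\nabla\psi]$, their $[B,[B,\psi]]$ analogues, and the nonlinearity $N_j$; the bounds (1)--(7), together with $\|\phi\|_{1+\alpha,2+\sigma}\le c e^{-\epsilon t_0}$ and $\|\psi\|_{**,1+\sigma,1+\alpha}\le c e^{-\epsilon t_0}$, give exactly the required smallness and Lipschitz dependence.

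The main technical obstacle is verifying that the $8\times 8$ Gram-type matrix obtained from pairing the principal parts of $\mathcal{E}^*$ against $Z^0,\dots,Z^7$ is invertible at leading order, with the precise block-diagonal structure claimed: scaling ($Z^0$), translations ($Z^1,\dots,Z^4$) and gauge/rotation modes ($Z^5,Z^6,Z^7$) must decouple up to errors that can be absorbed into $\Theta_l$. The three groups separate by their parity and quaternionic weights under the $SO(4)$ action on the BPST instanton, while the intra-group decoupling follows from the explicit symmetries of the $Z^l$'s recorded in the previous subsection. The second subtlety is controlling the boundary contributions on $\partial B_{2R}$ produced by the slow decay of $Z^5,Z^6,Z^7$: since $R=e^{\rho t_0}$ and the principal term of $\mathcal{E}^*$ decays only like $|y|^{-3}$ at infinity, a careful logarithmic cutoff analysis (as in \cite{DDW2020}) is needed to show these boundary pieces are negligible compared with $\mu_0^{-1}$ and can be absorbed into the $\Theta_l$'s.
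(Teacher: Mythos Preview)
Your overall strategy---testing $H$ against each $Z^l$ and isolating the linear dependence on the modulation parameters---is exactly what the paper does, and your treatment of $l=0$ (via the computation of \S2.5) and $l=1,\dots,4$ (via the diagonal pairing with $\dot\xi_j Z^j$) matches the paper's Steps~1 and~2.

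There is, however, a genuine misidentification for $l=5,6,7$. You locate the principal contribution in the term $F_{B_{\mu,\xi}}(\dot\theta_{\rho\sigma}(x-\xi)_\sigma\partial_{y_\rho},\cdot)$, but this term has been \emph{removed} from $\mathcal E^*$ by the very improvement of \S2.4 you invoke: the nonlocal correction $\Phi_1$ was introduced precisely so that its heat-equation piece cancels $F_B(\dot\theta\cdots)$ at main order. What survives in $\mathcal E^*$ is the interaction term $\tilde{\mathcal L}[\Phi_1^{(i)}]$ (see the explicit decomposition of $\mathcal E^*$ at the end of \S2.4 and the Appendix). The paper's Step~3 shows that pairing $\tilde{\mathcal L}[\Phi_1^{(1)}]$ against $Z^5$ yields the nonlocal integral
\[
144\,\mu\int_{t_0}^{t}\frac{\dot\theta_{12}(\tilde s)+\dot\theta_{34}(\tilde s)}{(t-\tilde s)^2}\,\Omega\!\left(\frac{\mu(\tilde s)^2}{t-\tilde s}\right)d\tilde s,
\]
and it is this expression---not a pointwise $\dot\theta$ term---that produces the equation for $\dot\theta_{12}$ after the same asymptotic analysis used in \S2.5. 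The analogous mechanism with $\Phi_1^{(2)},\Phi_1^{(3)}$ handles $l=6,7$. Your plan therefore needs to be rerouted through $\tilde{\mathcal L}[\Phi_1^{(i)}]$ rather than the already-cancelled $F_B(\dot\theta\cdots)$.

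Finally, your concerns about verifying invertibility of an $8\times 8$ Gram matrix and about logarithmic boundary terms at $\partial B_{2R}$ turn out to be unnecessary here. The paper sidesteps both by direct computation: the Appendix formulas show that each of $\tilde{\mathcal L}[\Phi_0]$, $\tilde{\mathcal L}[\Phi_1^{(i)}]$, $-\tilde{\mathcal L}[B_{1,q}]$ is already aligned componentwise with the corresponding $Z^l$ (the same quaternionic structure times a radial scalar), so no off-diagonal coupling appears at leading order, and the resulting radial integrals converge absolutely without any cutoff analysis.
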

\begin{proof}
{\bf Step 1}. We compute the integral
\begin{eqnarray*}
\int_{B_{2R}}\sum_{i=1}^4H_i[\lambda,\xi, \theta, \dot{\lambda},\dot{\xi}, \dot{\theta}, \phi, \psi](y,t(\tau))Z^0_i(y)dy,
\end{eqnarray*}
for $H( y,t(\tau))$ defined in (\ref{e5:2}). Observe that the main contribution to this integral comes from the term $\tilde{\mathcal L}[\Phi_0 + B_{1, q}]$. As we computed in Section \ref{blow-up-rate}, we have
\begin{eqnarray*}
\int_{B_{2R}}\sum_{i=1}^4\tilde{\mathcal L}_i[\Phi_0 + B_{1, q}]Z^0_i(y)dy = 144\frac{1}{\mu(t)}\int_{t_0}^{t}\frac{\mu(\tilde{s})\dot{\mu}(\tilde{s})}{(t-\tilde{s})^2}\Omega\left(\frac{\mu(\tilde{s})^2}{t-\tilde{s}}\right) d\tilde{s} + \frac{24\pi^2}{\mu}.
\end{eqnarray*}
Then using the arguments as in Section 2.6, we have
\begin{eqnarray*}
&&\mu^2\int_{B_{2R}}\sum_{i=1}^4\tilde{\mathcal L}_i[\Phi_0 + B_{1, q}]Z^0_i(y)dy = 144\mu\int_{t_0}^{t}\frac{\mu(\tilde{s})\dot{\mu}(\tilde{s})}{(t-\tilde{s})^2}\Omega\left(\frac{\mu(\tilde{s})^2}{t-\tilde{s}}\right) d\tilde{s} + 24\pi^2\mu\\
&&= \mu 144\int_{t_0}^{t}\frac{\mu_0(\tilde{s})\dot{\mu}_0(\tilde{s})}{\left(t-\tilde{s}\right)^2}
\Omega\left(\frac{\mu(\tilde{s})^2}{t-\tilde{s}}\right)d\tilde{s}\\
&&\quad+\mu 144\int_{t_0}^{t}\frac{\lambda(\tilde{s})\dot{\mu}(\tilde{s})}{\left(t-\tilde{s}\right)^2}
\Omega\left(\frac{\mu(\tilde{s})^2}{t-\tilde{s}}\right)d\tilde{s}+\mu 144\int_{t_0}^{t}\frac{\mu(\tilde{s})\dot{\mu}(\tilde{s})}{\left(t-\tilde{s}\right)^2}
\Omega'\left(\frac{\mu(\tilde{s})^2}{t-\tilde{s}}\right)\frac{2\mu(\tilde{s})\lambda(\tilde{s})}{t-\tilde{s}}d\tilde{s}\\
&&\quad +\mu 144\int_{t_0}^{t}\frac{\mu(\tilde{s})\dot{\lambda}(\tilde{s})}{\left(t-\tilde{s}\right)^2}
\Omega\left(\frac{\mu(\tilde{s})^2}{t-\tilde{s}}\right)d\tilde{s}\\
&&\quad  + 24\pi^2(\mu_0+\lambda)\\
&&= \mu 144\int_{t_0}^{t}\frac{\lambda(\tilde{s})\dot{\mu}(\tilde{s})}{\left(t-\tilde{s}\right)^2}
\Omega\left(\frac{\mu(\tilde{s})^2}{t-\tilde{s}}\right)d\tilde{s}\\
&&\quad+\mu 144\int_{t_0}^{t}\frac{\mu(\tilde{s})\dot{\mu}(\tilde{s})}{\left(t-\tilde{s}\right)^2}
\Omega'\left(\frac{\mu(\tilde{s})^2}{t-\tilde{s}}\right)\frac{2\mu(\tilde{s})\lambda(\tilde{s})}{t-\tilde{s}}d\tilde{s} +\mu 144\int_{t_0}^{t}\frac{\mu(\tilde{s})\dot{\lambda}(\tilde{s})}{\left(t-\tilde{s}\right)^2}
\Omega\left(\frac{\mu(\tilde{s})^2}{t-\tilde{s}}\right)d\tilde{s}\\
&&\quad  + 24\pi^2\lambda\\
&&=  144\Xi\kappa_0\lambda + 144\Xi\dot\lambda + 24\pi^2\lambda + O(\lambda\dot\lambda+\lambda^2).
\end{eqnarray*}
The other terms in $\int_{B_{2R}}\sum_{i=1}^4H_i[\lambda,\xi, \theta, \dot{\lambda},\dot{\xi}, \dot{\theta}, \phi, \psi](y,t(\tau))Z^0_i(y)dy$ can be dealt with similarly. Thus we obtain the equation for $\lambda$.

{\bf Step 2}. For $j = 1, 2, 3, 4$, we compute the integral
\begin{eqnarray*}
\int_{B_{2R}}\sum_{i=1}^4H_i[\lambda,\xi, \theta, \dot{\lambda},\dot{\xi}, \dot{\theta}, \phi, \psi](y,t(\tau))Z^j_i(y)dy,
\end{eqnarray*}
for $H( y,t(\tau))$ defined in (\ref{e5:2}). Observe that the main contribution to this integral comes from the term $\frac{\dot\xi_i}{\mu^2}(t)Z^i(y)|_{y=\frac{x-\xi(t)}{\mu(t)}}$. Similarly to {\bf Step 1}, we have
\begin{eqnarray*}
\mu^2\int_{B_{2R}}\sum_{j=1}^4\frac{\dot\xi_i}{\mu^2}(t)Z^i_j(y)Z^i_j(y)dy = \Pi_l[\lambda,\xi, \theta, \dot{\lambda},\dot{\xi}, \dot{\theta}, \phi, \psi](t),
\end{eqnarray*}
Thus we have the equation for $\xi_i$.

{\bf Step 3}. For $j = 5$, we compute the integral
\begin{eqnarray*}
\int_{B_{2R}}\sum_{i=1}^4H_i[\lambda,\xi, \theta, \dot{\lambda},\dot{\xi}, \dot{\theta}, \phi, \psi](y,t(\tau))Z^5_i(y)dy,
\end{eqnarray*}
for $H( y,t(\tau))$ defined in (\ref{e5:2}). Observe that the main contribution to this integral comes from the term $\tilde{\mathcal L}[\phi]$ with $$\Phi_1^{(1)} = dx\wedge d\bar x\left(\psi^{(12)}(z, t)(x-\xi(t))_2\frac{\partial}{\partial x_2}+\psi^{(34)}(z, t)(x-\xi(t))_4\frac{\partial}{\partial x_3}, \cdot\right).$$ As we computed in Section 2.5, we have
\begin{equation*}
\begin{aligned}
\mu^2\int_{B_{2R}}\sum_{i=1}^4\tilde{\mathcal L}_i[\Phi_1^{(1)}]Z^5_i(y)dy & = 144\mu\int_{t_0}^{t}\frac{\dot \theta_{12}(\tilde{s})+\dot \theta_{34}(\tilde{s})}{(t-\tilde{s})^2}\Omega\left(\frac{\mu(\tilde{s})^2}{t-\tilde{s}}\right) d\tilde{s}\\
&= \Pi_l[\lambda,\xi, \theta, \dot{\lambda},\dot{\xi}, \dot{\theta}, \phi, \psi](t),
\end{aligned}
\end{equation*}
Since $\theta$ satisfies $\theta_{12}(t) = \theta_{34}(t)$,  we thus have the equation for $\theta_{12}$.
\end{proof}

\subsection{The outer problem.}
To apply the Schauder fixed-point theorem to the outer problem (\ref{e:outerproblem}) and obtain a solution $\psi$, we consider the following linear problem first,
\begin{equation}\label{e4:3}
\begin{cases}
\partial_t\psi =
\Delta\psi + V_{\mu, \xi}(x, t)\psi + f(x, t)\quad&\text{in}\,\,\mathbb{R}^4\times (t_0, \infty),\\
\lim_{|x|\to +\infty}\psi(x, t) = 0\quad&\text{for all}\,\, t\in (t_0,\infty),
\end{cases}
\end{equation}
where $f(x, t)$ is a smooth function. Here $V_{\mu, \xi} \sim (1-\eta_R)\mu_0^{-2}\frac{1}{1+|y|^2}$ with $y = \frac{x-\xi(t)}{\mu_0(t)}$.  
Now we assume that for $\alpha$, $\beta>0$, $f(x, t)$ satisfies the following estimate
\begin{equation}\label{gg}
\begin{aligned}
|f(x, t)|\leq M\frac{\mu_0^{-2}(t)\mu_0^{\beta}(t)}{1+| y|^{2+\alpha}}, \quad  y =\frac{x-\xi(t)}{\mu_0(t)}
\end{aligned}
\end{equation}
and the least number $M>0$ satisfying (\ref{gg}) is denoted as $\|f\|_{*,\beta,2+\alpha}$.
\begin{prop}\label{l4:lemma4.1}
Suppose $\|f\|_{*,\beta,2+\alpha} < +\infty$ for some constants $\beta > 0$, $\alpha \in (1, 2)$. Let $\psi = \psi[f]$ be the solution of (\ref{e4:3}) given by the Duhamel formula (\ref{solutiongivenbyheatkernel}), then it holds that
\begin{equation}\label{e4:40}
|\psi(x, t)|\lesssim
\left\{
\begin{aligned}
&\|f\|_{*,\beta,2+\alpha}\frac{\mu_0^{\beta}}{1+|y|^{\alpha'}}, \quad |y|=\left|\frac{x-\xi}{\mu_0}\right|\leq \mu^{-1}_0,\\
&\|f\|_{*,\beta,2+\alpha}\mu_0^{\beta+\alpha'}(t), \quad |y|=\left|\frac{x-\xi}{\mu_0}\right| > \mu^{-1}_0\\
\end{aligned}
\right.
\end{equation}
and
\begin{equation}\label{e4:5}
|\nabla\psi(x, t)|\lesssim \|f\|_{*,\beta,2+\alpha}\frac{\mu_0^{\beta-1}}{1+|y|^{\alpha'+1}}\text{ for } |y|\leq 2R.
\end{equation}
Here $\alpha' = \alpha - 2\kappa > 0$ for a small constant $\kappa \in (0, \frac{\alpha}{2})$. 
\end{prop}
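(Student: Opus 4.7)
The plan is to insert the pointwise bound on $f$ into the Duhamel representation \eqref{solutiongivenbyheatkernel} and estimate the resulting double integral by separating the time variable $\tau = s-t$ into a short-time concentration regime and a long-time diffusive regime. Writing $s = t+\tau$, one has
\begin{equation*}
|\psi(x,t)| \lesssim \|f\|_{*,\beta,2+\alpha}\int_0^{\infty} \frac{e^{\kappa\tau}}{\tau^2}\int_{\mathbb{R}^4}e^{-|x-z|^2/\tau}\,\frac{\mu_0^{\beta-2}(t+\tau)}{1+|(z-\xi(t+\tau))/\mu_0(t+\tau)|^{2+\alpha}}\,dz\,d\tau,
\end{equation*}
so matters reduce to a weighted convolution estimate for the heat kernel against a function concentrated near $\xi(s)$ at scale $\mu_0(s)$.

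For each fixed $\tau$ I would estimate the inner spatial integral by comparing $\sqrt\tau$, the Gaussian width, with the natural concentration scale $\mu_0(s)(1+|y_\tau^*|)$ of the weight, where $y_\tau^* = (x-\xi(s))/\mu_0(s)$. In the concentration regime $\sqrt\tau \lesssim \mu_0(s)(1+|y_\tau^*|)$ the Gaussian is localised and preserves the pointwise shape of the weight, producing a contribution of order $\tau^2 \mu_0^{-4}(s)(1+|y_\tau^*|)^{-(2+\alpha)}$. In the diffusive regime $\sqrt\tau \gg \mu_0(s)(1+|y_\tau^*|)$ the Gaussian has spread beyond the scale of the weight and, by H\"older's inequality against a suitably chosen $L^p$-norm of $(1+|w|^{2+\alpha})^{-1}$, the spatial integral is bounded by $\mu_0^{-2+\alpha}(s)\,\tau^{1-\alpha/2}$. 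Substituting these bounds back and integrating in $\tau$, the dominant contribution comes from $\tau\sim \mu_0^2(t)(1+|y|^2)$ and yields the inner bound $|\psi(x,t)| \lesssim \|f\|_{*,\beta,2+\alpha}\mu_0^\beta(t)/(1+|y|^\alpha)$ when $|y|\leq \mu_0^{-1}(t)$, together with the uniform outer bound $|\psi(x,t)|\lesssim \|f\|_{*,\beta,2+\alpha}\mu_0^{\beta+\alpha}(t)$ when $|y|>\mu_0^{-1}(t)$, which matches \eqref{e4:40}. The exponential factor $e^{\kappa\tau}$ is absorbed by the decay $\mu_0^\beta(t+\tau) = e^{-\beta\kappa_0(t+\tau)}$, provided $\kappa < \beta\kappa_0$, which holds for $\kappa$ sufficiently small.

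For the gradient estimate \eqref{e4:5}, I would differentiate the Duhamel formula under the integral sign using the pointwise bound $|\nabla_x e^{-|x-z|^2/\tau}|\lesssim \tau^{-1/2} e^{-|x-z|^2/(2\tau)}$, which inserts an additional factor $\tau^{-1/2}$ in the integrand, and then repeat the two-regime analysis above to pick up the extra factor $\mu_0^{-1}(t)/(1+|y|)$ relative to \eqref{e4:40}; this argument is valid in the inner range $|y|\leq 2R$ where the bound is claimed.

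The main obstacle I anticipate is the intermediate transition regime $\sqrt\tau\sim \mu_0(s)(1+|y_\tau^*|)$, compounded by the fact that $\mu_0(s)$ varies exponentially on the natural time scale of the integration so that one cannot simply freeze $\mu_0(s)$ at time $t$. This is handled by exploiting the monotonicity of $\mu_0$ together with the observation that the dominant contribution in $\tau$ is concentrated on $\tau\lesssim \mu_0^2(t)(1+|y|^2)$, a range on which $\mu_0(t+\tau)\sim \mu_0(t)$; the long-time tail produces only lower-order corrections thanks to the strong $e^{-\beta\kappa_0 s}$ decay carried by the weight.
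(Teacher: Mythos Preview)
Your approach is correct and leads to the same estimates, but it is organised rather differently from the paper's proof. The paper splits the time integral at the fixed macroscopic scale $\tau=1$, writing $\psi = I_1 + I_2$. For the short-time piece $I_1$ it performs the $\tau$-integral first via the substitution $p=|x-u|/\sqrt\tau$, reducing matters to the purely spatial Riesz-type convolution $\int_{\mathbb{R}^4}|x-u|^{-2}(\mu_0^{2+\alpha}+|u-q|^{2+\alpha})^{-1}\,du$, which is then estimated by a three-region decomposition of $\mathbb{R}^4$ into $B(\bar x,|\bar x|/2)$, $B(0,|\bar x|/2)$ and their complement. For the long-time piece $I_2$ the Gaussian is used directly after a further split according to whether $|u|\gtrless 2|x|$. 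By contrast, you split adaptively in $\tau$ at the intrinsic scale $\mu_0^2(s)(1+|y_\tau^\ast|)^2$, which identifies immediately that the dominant contribution sits near $\tau\sim\mu_0^2(t)(1+|y|^2)$ and explains structurally why the answer is $\mu_0^\beta(1+|y|)^{-\alpha}$. The paper's route is more explicit and avoids having to track the drift of $\mu_0(s)$ and $\xi(s)$ inside the $\tau$-integral, whereas your scaling argument is cleaner conceptually but, as you correctly anticipate, must confront the variation of $\mu_0(t+\tau)$ over the integration range. One small slip: in your concentration regime the spatial integral is of order $\tau^2\mu_0^{\beta-2}(s)(1+|y_\tau^\ast|)^{-(2+\alpha)}$, not $\tau^2\mu_0^{-4}(s)(1+|y_\tau^\ast|)^{-(2+\alpha)}$; this does not affect the subsequent $\tau$-integration or the final bound.
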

We will give the proof of Proposition \ref{l4:lemma4.1} in Section 5. This result will be applied to the outer problem (\ref{e:outerproblem}) with
\begin{equation*}
\begin{aligned}
V_j(x, t)\psi& =  (1-\eta_R)\sum_{i = 1}^4[\psi_i, \partial_i B_{\mu, \xi, j}] + (1-\eta_R)\sum_{i = 1}^4[B_{\mu, \xi, i}, \partial_i\psi_j]\\
&\quad + (1-\eta_R)\sum_{i = 1}^4[\psi_i, \partial_i B_{\mu, \xi, j} - \partial_j B_{\mu, \xi, i} + [B_{\mu, \xi, i}, B_{\mu, \xi, j}]]\\
&\quad + (1-\eta_R)\sum_{i = 1}^4[B_{\mu, \xi, i}, \partial_i\psi_j - \partial_j\psi_i + [\psi_i, B_{\mu, \xi, j}]+[B_{\mu, \xi, i}, \psi_j]]\\
&\quad + (1-\eta_R)\partial_j\sum_{i = 1}^4[B_{\mu, \xi, i}, \psi_i] + (1-\eta_R)\sum_{i = 1}^4[B_{\mu, \xi, j}, [B_{\mu, \xi, i}, \psi_i]]
\end{aligned}
\end{equation*}
and
\begin{equation*}
\begin{aligned}
f_j(x, t) =   N_j[A_{\mu, \xi, \theta}^*-B_{\mu, \xi}+\varphi] & + (1-\eta_R)\mathcal E^*_j(x, t)  + \nabla\eta_{R}\nabla\tilde{\phi}_j \\
&+ \tilde{\phi}_j\big(\Delta -\partial_t\big)\eta_R,\,\, j = 1, 2, 3, 4.
\end{aligned}
\end{equation*}

\begin{prop}\label{propositionestimate2}
For $j = 1, 2, 3, 4$, we have the following estimates.
\begin{itemize}
\item[(1)]
\begin{equation*}\label{e4:403a}
\begin{aligned}
|\tilde{\phi}_j\big(\Delta -\partial_t\big)\eta_R|&\lesssim \frac{\mu_{0}^{-2}\mu_{0}^{1+\sigma}}{1+|y|^{3+\alpha}}\|\phi_j\|_{1+\alpha,2+\sigma},
\end{aligned}
\end{equation*}
\item[(2)]
\begin{equation*}\label{e4:403b}
\begin{aligned}
|\nabla\eta_{R}\nabla\tilde{\phi}_j|&\lesssim \frac{\mu_{0}^{-2}\mu_{0}^{1+\sigma}}{1+|y|^{3+\alpha}}\|\phi_j\|_{1+\alpha,2+\sigma},
\end{aligned}
\end{equation*}
\item[(3)]
\begin{equation*}\label{e4:404}
\begin{aligned}
&|N_j[A_{\mu, \xi, \theta}^*-B_{\mu, \xi}+\varphi]|\lesssim \frac{\mu_{0}^{-2}\mu_{0}^{1+\sigma}}{1+|y|^{3+\alpha}}\left(\|\phi_j\|_{1+\alpha, 2+\sigma}^2+\|\psi\|_{**,1+\sigma,1+\alpha}^2\right),
\end{aligned}
\end{equation*}
\item[(4)]
\begin{equation*}\label{e4:405}
\begin{aligned}
&\left|(1-\eta_R)\mathcal E^*_j(x, t)\right| \lesssim e^{-\varepsilon t_0}\frac{\mu_{0}^{-2}\mu_{0}^{1+\sigma}}{1+|y|^{3+\alpha}}.
\end{aligned}
\end{equation*}
\end{itemize}
\end{prop}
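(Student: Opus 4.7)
\medskip

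\textbf{Proof proposal for Proposition \ref{propositionestimate2}.}
The plan is to handle the four estimates in parallel by a support-plus-scaling analysis: each quantity to be bounded is concentrated in either the annulus $\{R \le |y| \le 2R\}$ (items (1), (2), (4)) or across all of $\mathbb R^4$ (item (3)), and one just has to insert the pointwise decay provided by (\ref{e4:24}) and (\ref{e4:400}) and the explicit structure of $\eta_R$. Throughout, I would use $y=(x-\xi(t))/\mu_0(t)$, the parameter bounds (\ref{e4:21})--(\ref{e4:22}), and the fact that $\mu_0=e^{-\kappa_0 t}$ and $R=e^{\rho t_0}$ with $\rho$ small, so that $\mu_0^3 R^2 \to 0$ as $t_0\to\infty$. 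This factor is what makes the pointwise products gain the extra negative power of $\mu_0$ needed to land in the target norm $\mu_0^{-2}\mu_0^{1+\sigma}/(1+|y|^{3+\alpha})$.

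For (1) and (2), I would first compute $\nabla_x\eta_R$, $\Delta_x\eta_R$ and $\partial_t\eta_R$ via the chain rule from $\eta_R=\eta_0(|x-\xi|/(R\mu_0))$. On the support of the derivatives of $\eta_R$ one has $|x-\xi|\sim R\mu_0$, so $|\nabla_x\eta_R|\lesssim (R\mu_0)^{-1}$, $|\Delta_x\eta_R|\lesssim (R\mu_0)^{-2}$, and $|\partial_t\eta_R|\lesssim |\dot\xi|/(R\mu_0)+|\dot\mu_0|/\mu_0\lesssim 1$ after using (\ref{e4:21}). Since $\tilde\phi_j(x,t)=\phi_j(y,t)$, the bound (\ref{e4:24}) gives $|\tilde\phi_j|\lesssim \|\phi\|_{1+\alpha,2+\sigma}\mu_0^{2+\sigma}/R^{1+\alpha}$ and $|\nabla_x\tilde\phi_j|\lesssim \|\phi\|_{1+\alpha,2+\sigma}\mu_0^{1+\sigma}/R^{2+\alpha}$ on the annulus. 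Multiplying and replacing $R^{k}$ by $(1+|y|)^{k}$ on the support yields the claimed pointwise bound, with an additional power $\mu_0^{3}R^{2}$ or better to spare.

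For (3), I would expand $N_j$ as written in (\ref{nonlineatterm}) and classify each commutator according to whether the two ``variable'' slots contain $\phi$ or $\psi$, and according to whether the ``coefficient'' slot contains $B_{\mu,\xi}$ (decaying like $(1+|y|)^{-1}$) or $A^*_{\mu,\xi,\theta}-B_{\mu,\xi}+\Phi_0+\Phi_1$ (which is smaller by powers of $\mu_0$). Every term is at least quadratic in $\varphi=\eta_R\tilde\phi+\psi$; for the $\phi$--$\phi$ part I would use (\ref{e4:24}) twice, and for the $\psi$--$\psi$ part I would use (\ref{e4:400}) twice, matching spatial decays with the required $(1+|y|)^{-(3+\alpha)}$. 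Cross terms are controlled by the same arithmetic since $\phi$ decays faster. The quadratic nature gives an extra $\mu_0^{2+\sigma}$ compared to the linear case, and since one needs to reach only $\mu_0^{\sigma-1}(1+|y|)^{-(3+\alpha)}$, the smallness assumptions $\|\phi\|\lesssim e^{-\epsilon t_0}$ and $\|\psi\|\lesssim e^{-\epsilon t_0}$ guarantee the claimed form with the quadratic factor $\|\phi\|^2+\|\psi\|^2$.

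For (4), the improvement carried out in Section 2.4 was precisely designed so that, after subtracting $\Phi_0+\Phi_1$, the remaining linear error $\mathcal E^*$ decays faster than $(1+|y|)^{-3}$: from the explicit expression of $\mathcal E^*$, each remaining term either contains a derivative of the kernel $k_1(t-s,z)$ (which gives the extra $\mu_0$ in $\mu_0^{1+\sigma}/(1+|y|^{4})$ in the inner bound) or a factor $\dot\xi,\dot\theta$ bounded by $\mu_0^{1+\sigma}$. Combining this with $(1-\eta_R)$, supported in $|y|\ge R$, and trading powers of $(1+|y|)$ for the small factor $R^{-\alpha}\lesssim e^{-\alpha\rho t_0}\le e^{-\varepsilon t_0}$ gives the claimed bound.

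\medskip

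The main obstacle is the nonlinear estimate (3): the expression (\ref{nonlineatterm}) produces a long list of quadratic and cubic commutators in which the ``coefficient'' slot may contain the background BPST connection $B_{\mu,\xi}$ (whose decay in $y$ is only $(1+|y|)^{-1}$) or its derivatives, so one must carefully match each term against the prescribed weights $(1+|y|)^{-(1+\alpha)}$ in the $\phi$-norm and the two-regime weight (\ref{e4:400}) for $\psi$, and verify that no single term falls below the required $(1+|y|)^{-(3+\alpha)}$ decay in the inner region or that the outer/inner mismatch at $|y|\sim\mu_0^{-1}$ for $\psi$ does not spoil the bound. This is a book-keeping issue rather than an analytic one, and can be handled by the same case-by-case method used for the error $\mathcal E^*$ in the inner problem estimates (1)--(7) already displayed above Proposition \ref{propositionestimate2}.
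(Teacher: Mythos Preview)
Your proposal is correct and follows essentially the same route as the paper's proof: for (1), (2), (4) you localize to the support of the cut-off derivatives $\{R\le|y|\le 2R\}$, compute $\nabla_x\eta_R$, $\Delta_x\eta_R$, $\partial_t\eta_R$ by the chain rule, insert the pointwise bound \eqref{e4:24} (resp.\ the improved decay of $\mathcal E^*$), and then trade factors of $R$ for factors of $(1+|y|)$ on the support; for (3) you exploit the quadratic/cubic structure of $N_j$ together with \eqref{e4:24} and \eqref{e4:400}. The only minor slips are cosmetic: in (4) the paper trades a full power $R^{-(1-\alpha)}$ (not $R^{-\alpha}$), and in (3) the ``coefficient'' slot in \eqref{nonlineatterm} actually contains $A_{\mu,\xi,\theta}^*-B_{\mu,\xi}+\Phi_0+\Phi_1=-B_{1,q}+F_{B_{\mu,\xi}}(\theta\cdots)+\Phi_0+\Phi_1$, which is bounded rather than decaying like $(1+|y|)^{-1}$ --- but this only helps, so your bookkeeping still closes.
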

\begin{proof}

{\bf Proof of (1)}: We have
\begin{equation*}
\begin{aligned}
|\tilde{\phi}_j\Delta\eta_R|\lesssim \frac{1}{R^2}\frac{\mu_{0}^{-2}\mu_{0}^{1+\sigma}}{1+|y|^{1+\alpha}}\|\phi_j\|_{1+\alpha,2+\sigma}\lesssim \frac{\mu_{0}^{-2}\mu_{0}^{1+\sigma}}{1+|y|^{3+\alpha}}\|\phi_j\|_{1+\alpha,2+\sigma}
\end{aligned}
\end{equation*}
and
\begin{equation*}
\begin{aligned}
|\tilde{\phi}_j\partial_t\eta_R|&\lesssim \frac{\mu_0}{R}\frac{\mu_{0}^{-2}\mu_{0}^{1+\sigma}}{1+|y|^{1+\alpha}}\|\phi_j\|_{1+\alpha,2+\sigma}\lesssim \frac{\mu_{0}^{-2}\mu_{0}^{1+\sigma}}{1+|y|^{3+\alpha}}\|\phi_j\|_{1+\alpha,2+\sigma}.
\end{aligned}
\end{equation*}

\noindent{\bf Proof of (2)}:
We have
\begin{equation*}
|\nabla\eta_{R}\nabla\tilde{\phi}_j|\lesssim \frac{1}{R}\frac{\mu_{0}^{-2}\mu_{0}^{1+\sigma}}{1+|y|^{2+\alpha}}\|\phi_j\|_{1+\alpha, 2+\sigma}\lesssim \frac{\mu_{0}^{-2}\mu_{0}^{1+\sigma}}{1+|y|^{3+\alpha}}\|\phi_j\|_{1+\alpha, 2+\sigma}.
\end{equation*}

\noindent{\bf Proof of (3)}: From the definition in (\ref{nonlineatterm}), we have
\begin{equation*}
\begin{aligned}
|N_j[A_{\mu, \xi, \theta}^*-B_{\mu, \xi}+\varphi]|&\lesssim \mu_0^{-3}\frac{\mu_{0}^{2+2\alpha}}{1+|y|^{3+2\sigma}}\left(\|\phi_j\|_{1+\alpha, 2+\sigma}+\|\psi\|_{**,1+\sigma,1+\alpha}\right)^2\\
&\quad +\mu_0^{-3}\frac{\mu_{0}^{3+3\sigma}}{1+|y|^{3+3\alpha}}\left(\|\phi_j\|_{1+\alpha, 2+\sigma}+\|\psi\|_{**,1+\sigma,1+\alpha}\right)^3\\
&\lesssim \frac{\mu_{0}^{-2}\mu_{0}^{1+\sigma}}{1+|y|^{3+\alpha}}\left(\|\phi_j\|_{1+\alpha, 2+\sigma}^2+\|\psi\|_{**,1+\sigma,1+\alpha}^2\right).
\end{aligned}
\end{equation*}

\noindent{\bf Proof of (4)}: We have
\begin{equation*}
\begin{aligned}
|(1-\eta_R)\mathcal E^*_j(x, t)|\lesssim \frac{1}{R^{1-\alpha}}\mu_{0}^{-2}\frac{\mu_{0}^{1+\sigma}}{1+|y|^{3+\alpha}}\lesssim e^{-\varepsilon t_0}\frac{\mu_{0}^{-2}\mu_{0}^{1+\sigma}}{1+|y|^{3+\alpha}}.
\end{aligned}
\end{equation*}
\end{proof}

\subsection{Proof of Theorem \ref{t:main}: Solving the inner-outer gluing system.}
We now reformulate the existence to the inner problem (\ref{e:innerproblembeforechangeofvariables2}) and the outer problem (\ref{e:outerproblem}) as a fixed point problem; then we will use Schauder fixed point theorem to find a solution.

{\bf Step 1}. Suppose $h$ is a function satisfying the assumption $\|h\|_{1+\sigma} \lesssim e^{-\varepsilon t_0}$. Then it is well known that the solution of
\begin{equation}\label{e5:14}
\dot{\lambda} + (\kappa_0 + c_0)\lambda = h(t)
\end{equation}
is given by
\begin{equation}\label{e5:15}
\lambda(t) = e^{-(\kappa_0 + c_0)t}\left[d + \int_{t_0}^te^{(\kappa_0 + c_0)\tau} h(\tau)d\tau\right],
\end{equation}
here $d$ is an arbitrary constant. Therefore, we can estimate as follows,
\begin{equation*}
\|e^{(1+\sigma)\kappa_0t}\lambda(t)\|_{L^\infty(t_0,\infty)}\lesssim e^{-(c_0-\sigma\kappa_0)t_0}d + \|h\|_{1+\sigma}
\end{equation*}
and
\begin{equation*}
\|\dot{\lambda}(t)\|_{1+\sigma}\lesssim e^{-(c_0-\sigma\kappa_0)t_0}d + \|h\|_{1+\sigma}
\end{equation*}
when the positive constant $\sigma$ is chosen in the interval $(0, \frac{c_0}{\kappa_0})$.

Denoting $\Lambda(t) = \dot{\lambda}(t)$, we have the following relation
\begin{equation}\label{e5:1700}
\Lambda + (\kappa_0 + c_0)\int_{t}^\infty\Lambda(s)ds = h(t),
\end{equation}
from which we know that there exists a bounded linear operator $\mathcal{L}_1: h\to \Lambda$ by assigning the solution $\Lambda$ of (\ref{e5:1700}) to any function $h$ satisfying the assumption $\|h\|_{1+\sigma} < +\infty$. Furthermore, $\mathcal{L}_1$ is continuous between the linear space $L^\infty(t_0, \infty)$ endowed with $\|\cdot\|_{1+\sigma}$-topology.

For any vector function $h: (t_0,\infty)\to \mathbb{R}^n$ satisfying the condition $\|h\|_{1+\sigma} < +\infty$, the solution of the following equation
\begin{equation}\label{e5:19}
\dot{\xi} = h(t)
\end{equation}
can be expressed as follows,
\begin{equation}\label{e5:2000}
\xi(t) = \xi^0(t) + \int_{t}^\infty h(s)ds,
\end{equation}
with
\begin{equation*}
\xi^0(t) = q.
\end{equation*}
Then we have
\begin{equation*}
|\xi(t) - q|\lesssim e^{-(1+\sigma)\kappa_0t}\|h\|_{1+\sigma}
\end{equation*}
and
\begin{equation*}
\|\dot{\xi} - \dot{\xi}^0\|_{1+\sigma}\lesssim \|h\|_{1+\sigma}.
\end{equation*}
Now we define $\Xi(t) = \dot{\xi}(t) - \dot{\xi}^0$, then (\ref{e5:2000}) gives us a bounded linear operator $\mathcal{L}_2: h\to \Xi$ between the linear space $L^\infty(t_0, \infty)$ endowed with the $\|\cdot\|_{1+\sigma}$-topology. Similarly, from Proposition \ref{l5:1}, there exists a bounded linear operator $\mathcal{L}_3: h\to \Upsilon:= \dot{\theta}(t)$ between the linear space $L^\infty(t_0, \infty)$ endowed with the $\|\cdot\|_{\sigma}$-topology.
Observe that ($\lambda$, $\xi$, $r$) is a solution of (\ref{e5:9}) if ($\Lambda = \dot{\lambda}(t)$, $\Xi = \dot{\xi}(t) - \dot{\xi}^0(t)$, $\Upsilon:= \dot{\theta}(t)$) is a fixed point of the following problem
\begin{equation}\label{e5:23}
(\Lambda, \Xi, \Upsilon) = \mathcal{T}_0(\Lambda, \Xi, \Upsilon)
\end{equation}
where
\begin{equation*}
\begin{aligned}
\mathcal{T}_0& := \left(\mathcal{L}_0(\hat{\Pi}_1[\Lambda, \Xi, \Upsilon, \phi, \psi], \mathcal{L}_2(\hat{\Pi}_1[\Lambda, \Xi, \Upsilon, \phi, \psi]), \cdots, \mathcal{L}_2(\hat{\Pi}_4[\Lambda, \Xi, \Upsilon, \phi, \psi]), \right.\\
& \quad\quad\quad \left.\mathcal{L}_3(\mu_0^{-1}\hat{\Pi}_5[\Lambda, \Xi, \Upsilon, \phi, \psi], \mathcal{L}_3(\mu_0^{-1}\hat{\Pi}_6[\Lambda, \Xi, \Upsilon, \phi, \psi], \mathcal{L}_3(\mu_0^{-1}\hat{\Pi}_7[\Lambda, \Xi, \Upsilon, \phi, \psi]\right)\\
& := \left(\bar{A}_0(\Lambda, \Xi, \Upsilon, \phi, \psi), \bar{A}_2(\Lambda, \Xi, \Upsilon, \phi, \psi),\cdots,  \bar{A}_7(\Lambda, \Xi, \Upsilon, \phi, \psi)\right)
\end{aligned}
\end{equation*}
with
\begin{equation*}
\hat{\Pi}_l[\Lambda, \Xi, \Upsilon, \phi, \psi] := \Pi_l\left[\int_{t}^\infty \Lambda, q + \int_{t}^\infty\Xi, \int_{t}^\infty \Upsilon,  \Lambda, \Xi, \Upsilon, \phi, \psi\right]
\end{equation*}
for $l = 0, 1, \cdots, 7$.

{\bf Step 2}. From Proposition \ref{proposition5.1} we know that there is a bounded linear operator $\mathcal{T}_1$ assigning to any function $h(y,\tau)$ with $\|h\|_{3 + \alpha, \sigma}$-bounded the solution of (\ref{e5:31}). Therefore the solution of problem (\ref{e5:4}) is a fixed point of the following problem
\begin{equation}
\phi = \mathcal{T}_1(H[\lambda,\xi, \theta, \dot{\lambda},\dot{\xi}, \dot{\theta}, \phi, \psi](y,t(\tau))).
\end{equation}

{\bf Step 3}. From Proposition \ref{l4:lemma4.1} we know that there is a bounded linear operator $\mathcal{T}_2$ assigning to any given differential 1-form $f(x, t)$ the solution $\psi = \mathcal{T}_2(f)$ for problem (\ref{e4:3}). Therefore, $\psi$ is a solution of the outer problem (\ref{e:outerproblem}) if $\psi$ is a fixed point of the operator
\begin{equation*}\label{e4:34}
\mathcal{A}(\psi):=\mathcal{T}_2(\mathbf f),
\end{equation*}
with
\begin{equation}\label{e4:35}
f_j(x, t) =  N_j[A_{\mu, \xi, \theta}^*-B_{\mu, \xi}+\varphi] + (1-\eta_R)\mathcal E^*_j(x, t)  + \nabla\eta_{R}\nabla\tilde{\phi}_j + \tilde{\phi}_j\big(\Delta -\partial_t\big)\eta_R,
\end{equation}
$j=1, 2, 3, 4$ and $\mathbf f = \sum_{j=1}^4f_jdx_j$.
Equivalently, we need to solve the following fixed point problem
\begin{equation}
\psi = \mathcal{T}_2(\mathbf f).
\end{equation}

From {\bf Step 1-3}, to obtain a solution, we need to solve the following fixed point problem with unknown functions $(\phi, \psi, \lambda, \xi, \theta)$,
\begin{equation}\label{inner_outer_gluing_system00}
\left\{
\begin{aligned}
&(\Lambda, \Xi, \Gamma) = \mathcal{T}_0(\Lambda, \Xi, \Gamma),\\
&\phi = \mathcal{T}_1(H[\lambda,\xi, \theta, \dot{\lambda},\dot{\xi}, \dot{\theta}, \phi, \psi](y,t(\tau))),\\
&\psi = \mathcal{T}_2(\mathbf f).
\end{aligned}
\right.
\end{equation}
To this aim, we use the Schauder fixed-point theorem in the following set
\begin{equation*}\label{convexsetforschauder}
\begin{aligned}
\mathcal{B} &= \Bigg\{(\phi, \psi, \lambda, \xi, \theta, \dot{\lambda}, \dot{\xi}, \dot{\theta}): \|\dot{\lambda}(t)\|_{1+\sigma} + \|\dot{\xi}(t)\|_{1+\sigma}\\
&\quad\quad\quad  + \|\dot{\theta}(t)\|_{\sigma} + \|\lambda(t)\|_{1+\sigma}  + \|\xi(t)-q\|_{1+\sigma} + \|\theta\|_{\sigma} + e^{\varepsilon t_0}\|\psi\|_{**,1+\sigma,1+\alpha}\\
&\quad\quad\quad  + e^{\varepsilon t_0}\|\phi\|_{2+\sigma, 1+\alpha} \leq c_{\mathcal B}\Bigg\}
\end{aligned}
\end{equation*}
for a fixed but large constant $c_{\mathcal B}>0$ which will be made precise as follows.

Let
\begin{equation*}
K := \max\{\|f_0\|_{1+\sigma}, \|f_1\|_{1+\sigma},\cdots, \|f_{7}\|_{1+\sigma}\}
\end{equation*}
where $f_0$, $f_1$, $\cdots$, $f_{7}$ are the functions defined in Proposition \ref{l5:1}. Then we have the following estimate
\begin{equation*}
\begin{aligned}
&\left|e^{(1+\sigma)\kappa_0t}\bar{A}_i(\Lambda, \Xi, \Upsilon, \phi, \psi)\right|\\
&\leq \tilde C\left(  e^{-(c_0-\sigma\kappa_0)t_0}d + \|\phi\|_{1+\alpha, 2+\sigma}+\|\psi\|_{**,1+\sigma,1+\alpha} + K\right) \\ &\quad +\tilde C\left(e^{-\varepsilon t_0}\left(\|\Lambda\|_{1+\sigma} + \|\Xi\|_{1+\sigma} + \|\Upsilon\|_{\sigma}\right)\right) \\
&\leq \tilde C\left(2K + e^{-\varepsilon t_0}c_{\mathcal{B}}\right) < c_{\mathcal B}.
\end{aligned}
\end{equation*}
In the above, we choose the constant $d$ satisfying the condition $e^{-(c_0-\sigma\kappa_0)t_0}d < K$ and $c_{\mathcal B} > \frac{2\tilde C K}{1-e^{-\varepsilon t_0}}$, therefore we have $\mathcal{T}_0(\mathcal{B})\subset \mathcal{B}$ (the constant $\rho$ in (\ref{definitionofR}) is chosen sufficiently small).

On the set $\mathcal{B}$, from the estimates at the end of Section 3.1, we know
\begin{equation*}\label{e6:1}
\begin{aligned}
\left|H[\lambda,\xi, \theta, \dot{\lambda},\dot{\xi}, \dot{\theta}, \phi, \psi](y,t(\tau))\right|\leq C_1 e^{-\varepsilon t_0}\frac{\mu_0^{2+\sigma}}{1+|y|^{3+\alpha}}
\end{aligned}
\end{equation*}
Using Proposition \ref{proposition5.1}, it holds that $e^{\varepsilon t_0}\|\phi\|_{2+\sigma, 1+\alpha}\leq \|\mathcal T_1\|C_1$, hence  $\mathcal{T}_1(\mathcal{B})\subset \mathcal{B}$ when $c_{\mathcal B} > \|\mathcal T_1\|C_1$. 
Similarly, Proposition \ref{l4:lemma4.1} and Proposition \ref{propositionestimate2} ensure $\mathcal{T}_2(\mathcal{B})\subset \mathcal{B}$. From these we know that the operator $\mathcal{T}$ defined in the inner-outer gluing system (\ref{inner_outer_gluing_system00}) maps the set $\mathcal{B}$ into itself. Since $\lambda$, $\xi$, $\theta$, $\dot{\lambda}$, $\dot{\xi}$, $\dot{\theta}$, $\phi$ and $\psi$ decay uniformly as $t\to +\infty$, standard parabolic estimates ensure that $\mathcal{T}$ is compact. Therefore by the Schauder fixed-point theorem, the inner-outer gluing system (\ref{inner_outer_gluing_system00}) has a fixed point in $\mathcal{B}$. Thus we find a solution to the system of (\ref{e:innerproblembeforechangeofvariables2}) and (\ref{e:outerproblem}), which gives us a solution of
\begin{equation*}
\left\{\begin{array}{ll}
        \frac{\partial A}{\partial t} = -D_A^*F_A + D_{A}D_{A}^*\left(A_{\mu, \xi, \theta}^*-B_{\mu, \xi} + \varphi\right)\text{ in }\mathbb{R}^4\times [t_0, \infty),\\
        A(\cdot, t_0) = A_0\text{ in }\mathbb{R}^4
       \end{array}
\right.
\end{equation*}
when $t_0 > 0$ is large enough. By the Donaldson-De Turck trick as discussed at the end of the introduction, there exists a unique solution $S\in C^\infty(\mathbb{R}^4\times [t_0, \infty))$ of the following problem
$$
S^{-1}\frac{\partial S}{\partial t} = -D_A^*\left(A_{\mu, \xi, \theta}^*-B_{\mu, \xi} + \varphi\right)\text{ on }\mathbb{R}^4\times [t_0, \infty), S(0) = id_{\mathbb{R}^4\times SU(2)}.
$$
Now we define $\tilde A: = (S^{-1})^*A$, then $\tilde A$ is a strong solution to the Yang-Mills gradient flow
\begin{equation*}
\left\{\begin{array}{ll}
        \frac{\partial \tilde A}{\partial t} = -D_{\tilde A}^*F_{\tilde A}\text{ in }\mathbb{R}^4\times [t_0, \infty),\\
        {\tilde A}(\cdot, t_0) = A_0\text{ in }\mathbb{R}^4.
       \end{array}
\right.
\end{equation*}
Therefore $\bar A(\cdot, t) = \tilde A(\cdot, t+t_0)$ is a solution of (\ref{e:Yangmillsheatflow}). This completes the proof of Theorem \ref{t:main}.

\section{Proof of Proposition \ref{proposition5.1}}
In this section, we prove Proposition \ref{proposition5.1}. Consider the solvability of the following linear problem
\begin{equation}\label{e:linearproblem}
\left\{
\begin{aligned}
&\partial_\tau \phi_j = \mathcal L_j[\phi] + h_j\\
&\quad\quad = \Delta \phi_j + \sum_{i = 1}^4[\phi_i, \partial_i B_j] + \sum_{i = 1}^4[B_i, \partial_i\phi_j]  + \sum_{i = 1}^4[\phi_i, \partial_i B_j - \partial_j B_i + [B_i, B_j]]\\
&\quad\quad\quad + \sum_{i = 1}^4[B_i, \partial_i\phi_j - \partial_j\phi_i + [\phi_i, B_j]+[B_i, \phi_j]]\\
&\quad\quad\quad + \partial_j\sum_{i = 1}^4[B_i, \phi_i] + \sum_{i = 1}^4[B_j, \partial_i\phi_i] + \sum_{i = 1}^4[B_j, [B_i, \phi_i]] + h_j(y, \tau)\\
&\quad\quad\quad\quad\quad\quad\quad\quad\quad\quad\quad\quad\quad\quad\quad\quad\quad\quad\quad\quad\quad\quad\quad\quad\quad\quad\quad\text{ in }\mathbb{R}^4 \times [\tau_0, +\infty)\\
&\phi_j(\cdot, \tau_0)  = 0\text{ in }\mathbb{R}^4, \,\, j = 1,\cdots, 4.
\end{aligned}
\right.
\end{equation}
Here $h(y, \tau)  = \sum_{j = 1}^4 h_j(y, \tau)dx_j: \mathbb{R}^4\times [\tau_0, +\infty)\to \Im \mathbb H\,\, d\bar x$ with support in the ball $B_{2R}(0)$.
First, we have the following property.
\begin{lemma}\label{l5555}
Suppose $\|h\|_{3+\alpha, \nu} < +\infty$ and
\begin{equation}
\int_{\mathbb{R}^4}\sum_{i=1}^4h_i(y, \tau)Z_i^j(y)dy = 0\text{ for } j=0, 1, 2, \cdots, 7.
\end{equation}
Then we have \begin{equation}\label{orthogalforphi}
\int_{B_{2R}}\sum_{i=1}^4\phi_i(y, \tau)Z_i^j(y)dy = 0\text{ for } j=0, 1, 2, \cdots, 7, \,\, \tau\in [\tau_0, +\infty).
\end{equation}
\end{lemma}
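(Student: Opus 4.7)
The plan is to derive an ODE for the scalar quantities
\[
q_l(\tau) := \int_{B_{2R}}\sum_{i=1}^{4}\phi_i(y,\tau)\,Z^l_i(y)\,dy, \qquad l=0,1,\dots,7,
\]
and show that each $q_l$ vanishes identically on $[\tau_0,\infty)$. The initial condition $\phi(\cdot,\tau_0)=0$ gives $q_l(\tau_0)=0$, so the task reduces to proving $\dot q_l(\tau)=0$.

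Differentiating under the integral sign and substituting the equation \eqref{e:linearproblem} for $\partial_\tau\phi_i$, one gets
\[
\dot q_l(\tau) \;=\; \int_{B_{2R}}\sum_{i=1}^{4}\bigl(\mathcal L_i[\phi]+h_i\bigr)\,Z^l_i\,dy.
\]
The $h$-contribution is $0$ by hypothesis (after extending $h$ by zero outside its support in $B_{2R}$). For the $\mathcal L$-contribution I would invoke the formal self-adjointness of the modified linearized operator $\mathcal L=\nabla_B^*\nabla_B-2*[*F_B,\cdot\,]$: one has $\int\langle \mathcal L[\phi],Z^l\rangle\,dy=\int\langle\phi,\mathcal L[Z^l]\rangle\,dy$ up to boundary terms on $\partial B_{2R}$, and the interior piece vanishes because $Z^l\in\ker\mathcal L$ (this being the very characterisation recalled in Section 2.2).

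The main obstacle is therefore the boundary contribution on $\partial B_{2R}$. Two complementary points will be needed. First, one must use the decay of the kernel elements, $|Z^0|\lesssim|y|^{-3}$ and $|Z^l|\lesssim|y|^{-4}$ for $l=1,\dots,4$ (together with the analogous bounds for $l=5,6,7$), together with a priori bounds on $\phi$ and $\nabla_y\phi$ on the shell $|y|\sim R$ coming from the Duhamel representation used to construct $\phi$ in Proposition \ref{proposition5.1}. Second, the cleanest way is to observe that $\phi$ is constructed as a limit (or restriction) of a problem posed on $\mathbb R^4$ with $h$ extended by zero, in which case the integration-by-parts produces no boundary terms at all, the $L^2$-decay of $\phi$ and $Z^l$ making every surface integral at infinity zero. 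Taking this viewpoint, self-adjointness holds globally and one obtains $\dot q_l(\tau)=0$ directly.

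Combining the two ingredients gives $\dot q_l\equiv 0$ on $[\tau_0,\infty)$, and together with $q_l(\tau_0)=0$ this yields \eqref{orthogalforphi}. The only technical care required is to justify that the construction of $\phi$ in Proposition \ref{proposition5.1} is compatible with the self-adjoint pairing used here, which in practice means using the Duhamel formula (or its variational analogue) to realize $\phi$ as the natural limit of problems for which the pairing $\langle\mathcal L\phi,Z^l\rangle=\langle\phi,\mathcal L Z^l\rangle=0$ is exact; everything else is a direct differentiation under the integral sign.
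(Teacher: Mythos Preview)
Your approach is correct and essentially matches the paper's: both pair the equation against $Z^l$, use self-adjointness of $\mathcal L$ together with $\mathcal L[Z^l]=0$, and exploit the zero initial data. The paper implements your ``second viewpoint'' by testing against $Z^l_i\eta$ with a smooth cutoff $\eta(y)=\eta_0(|y|/R')$ (here $R'$ is a dummy parameter sent to infinity, not the fixed $R$ of the ball $B_{2R}$); the commutator terms
\[
\int_{\mathbb R^4}\sum_i\phi_i\cdot\bigl(Z^l_i\Delta\eta+\nabla\eta\cdot\nabla Z^l_i\bigr)\,dy \;-\;\int_{\mathbb R^4}\sum_i h_i\cdot Z^l_i(1-\eta)\,dy
\]
are shown to be $O((R')^{-\varsigma})$ uniformly on finite $\tau$-intervals, and one lets $R'\to\infty$.

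One caution: your first suggested route, invoking the decay of $\phi,\nabla\phi$ from Proposition~\ref{proposition5.1}, would be circular, since that proposition is proved via Lemma~\ref{l5:3}, which in turn relies on the present lemma. The cutoff argument avoids this because it only needs a qualitative bound on $\phi$ (finiteness of $\|\phi\|_{1+\alpha,\tau_1}$ for each finite $\tau_1$, which holds automatically for the linear Cauchy problem on $\mathbb R^4$ with bounded compactly supported $h$), not the sharp estimate~\eqref{e5:100}.
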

\begin{proof}
Let us test equation (\ref{e:linearproblem}) with the functions
\begin{equation*}
Z^{j}_i\eta,\quad \eta(y) = \eta_0(|y|/R)
\end{equation*}
and sum for index $i = 1, 2, 3, 4$,
where $\eta_0$ is a smooth cut-off function satisfying $\eta_0(r) = 1$ for $r < 1$, $\eta_0(r) = 0$ for $r > 2$ and $R > 0$ is a large constant. Using the fact that $\mathcal L$ is self-adjoint, we have the following relation
\begin{equation*}
\int_{\mathbb{R}^4}\sum_{i=1}^4\phi_i(\cdot, \tau) Z^j_i\eta dy = \int_{0}^\tau ds\int_{\mathbb{R}^4}\left(\sum_{i=1}^4\phi_i(\cdot, s)\cdot \mathcal L_i[\eta Z^j] + \sum_{i=1}^4h_i\cdot Z^j_i\eta\right)dy.
\end{equation*}
On the other hand,
\begin{equation*}
\begin{aligned}
&\int_{\mathbb{R}^4}\left(\sum_{i=1}^4\phi_i(\cdot, s)\cdot \mathcal L_i[\eta Z^j] + \sum_{i=1}^4h_i\cdot Z^j_i\eta\right)\\
&\quad = \int_{\mathbb{R}^4}\sum_{i=1}^4\phi_i\cdot (Z^j_i\Delta\eta + \nabla\eta\cdot \nabla Z^j_i) - \sum_{i=1}^4h_i\cdot Z^j_i(1-\eta)\\
&\quad = O(R^{-\varsigma})
\end{aligned}
\end{equation*}
uniformly in $\tau\in (\tau_0,\tau_1)$,  where $\tau_1 > 0$ is an arbitrary large constant and $\varsigma > 0$ is a small constant.
Now let $R\to +\infty$ to  obtain the relation (\ref{orthogalforphi}).
\end{proof}
\begin{lemma}\label{l5:3}
Suppose $\alpha\in (0, 1)$, $\nu > 0$, $\|h\|_{3+\alpha, \nu} < +\infty$ and
\begin{equation}\label{e:20200301}
\int_{\mathbb{R}^4}\sum_{i=1}^4h_i(y, \tau)Z_i^j(y)dy = 0\text{ for } j=0, 1, 2, \cdots, 7.
\end{equation}
Then, for $\tau_1 \in (\tau_0, +\infty)$ large enough, any solution of (\ref{e:linearproblem}) satisfies the estimate
\begin{equation*}\label{e5:35}
\|\phi(y,\tau)\|_{1+\alpha,\tau_1}\lesssim \|h\|_{3+\alpha,\tau_1}.
\end{equation*}
Here, $\|g\|_{b, \tau_1}:=\sup_{\tau\in (\tau_0,\tau_1)}\tau^\nu\|(1+| y|^b)g\|_{L^\infty(\mathbb{R}^4)}$.
\end{lemma}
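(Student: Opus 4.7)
The plan is a contradiction/blow-up argument of the kind used for the inner linear theory in \cite{delPinoMussoJEMS} and \cite{DDW2020}. Assume the conclusion fails. Then there exist sequences $\tau_{1,n}\to +\infty$, data $h^{(n)}$ with $\|h^{(n)}\|_{3+\alpha,\tau_{1,n}}\to 0$ satisfying \eqref{e:20200301}, and solutions $\phi^{(n)}$ of \eqref{e:linearproblem} (with $h$ replaced by $h^{(n)}$ and the equation restricted to $[\tau_0,\tau_{1,n}]$) normalized by $\|\phi^{(n)}\|_{1+\alpha,\tau_{1,n}}=1$. Thus there exist $(y_n,\tau_n)\in\mathbb{R}^4\times(\tau_0,\tau_{1,n}]$ such that
\[
\tau_n^{\nu}(1+|y_n|^{1+\alpha})|\phi^{(n)}(y_n,\tau_n)|\geq \tfrac12 .
\]
Lemma \ref{l5555} applies to each $\phi^{(n)}$ and gives the orthogonality relations \eqref{orthogalforphi}. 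The small datum $h^{(n)}$ and standard parabolic regularity imply interior $C^{2,1}$ control of $\phi^{(n)}$ by $\|\phi^{(n)}\|_{1+\alpha,\tau_{1,n}}=1$ (losing only the spatial weight). The argument now splits naturally into three regimes for $(y_n,\tau_n)$.

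\textbf{Case 1: $|y_n|$ stays bounded and $\tau_n$ stays bounded.} Define $\tilde\phi^{(n)}(y,\tau):=\tau_n^{\nu}\phi^{(n)}(y,\tau)$. Then $\tilde\phi^{(n)}$ is uniformly bounded on compact sets of $\mathbb{R}^4\times[\tau_0,+\infty)$, satisfies \eqref{e:linearproblem} with right-hand side going to zero in the local uniform norm, and obeys the pointwise bound $|\tilde\phi^{(n)}(y,\tau)|\lesssim (1+|y|)^{-1-\alpha}$ from the normalization. Parabolic Schauder estimates produce a limit $\tilde\phi$ on $\mathbb{R}^4\times[\tau_0,+\infty)$ solving the homogeneous linearized problem $\partial_\tau\tilde\phi=\mathcal{L}[\tilde\phi]$ with zero initial data, with $\tilde\phi(y_\infty,\tau_\infty)\neq 0$ and decay $|\tilde\phi|\lesssim (1+|y|)^{-1-\alpha}$. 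Testing the equation against the zero-eigenfunctions and passing to the limit in Lemma \ref{l5555} shows that $\tilde\phi(\cdot,\tau)$ is $L^2$-orthogonal to $Z^0,\dots,Z^7$ for every $\tau$. By Brendle's classification \cite{Brendle2003} of the kernel of $\mathcal{L}$ and the standard spectral-gap/monotonicity argument for the linearized Yang--Mills heat operator (i.e.\ positivity of the quadratic form on the orthogonal complement of $\mathrm{span}\{Z^0,\dots,Z^7\}$ in the decay class $(1+|y|)^{-1-\alpha}$), one concludes $\tilde\phi\equiv 0$, contradicting $\tilde\phi(y_\infty,\tau_\infty)\neq 0$.

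\textbf{Case 2: $\tau_n\to +\infty$ but $|y_n|$ stays bounded.} Here I translate in time by setting $\tilde\phi^{(n)}(y,\tau):=\tau_n^{\nu}\phi^{(n)}(y,\tau+\tau_n)$, for $\tau\in[\tau_0-\tau_n,0]$. The normalization gives $|\tilde\phi^{(n)}(y_n,0)|\geq 1/2$, the weight gives $|\tilde\phi^{(n)}(y,\tau)|\lesssim (1+\tau/\tau_n)^{-\nu}(1+|y|)^{-1-\alpha}\lesssim (1+|y|)^{-1-\alpha}$ on any compact time interval, and the forcing term scales as $\tau_n^{\nu}h^{(n)}(y,\tau+\tau_n)$, whose weighted norm is bounded by $(1+\tau/\tau_n)^{-\nu}\|h^{(n)}\|_{3+\alpha,\tau_{1,n}}\to 0$ locally. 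Passing to the limit yields a nonzero ancient solution of $\partial_\tau\tilde\phi=\mathcal{L}[\tilde\phi]$ on $\mathbb{R}^4\times(-\infty,0]$ that decays like $(1+|y|)^{-1-\alpha}$ and is orthogonal to the kernel at every time. A standard Liouville/energy argument for the self-adjoint operator $\mathcal{L}$ (using that $\int|\tilde\phi|^2\,dy$ is decreasing on the orthogonal complement of the kernel) forces $\tilde\phi\equiv 0$, again a contradiction.

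\textbf{Case 3: $|y_n|\to +\infty$.} In this regime the normalization forces $\tau_n^{\nu}|\phi^{(n)}(y_n,\tau_n)|\geq \tfrac12 |y_n|^{-1-\alpha}$, while parabolic regularity away from the origin, combined with the fact that $\mathcal{L}$ reduces to $\partial_\tau-\Delta$ plus lower-order terms decaying like $|y|^{-2}$, will give an improved decay estimate. Concretely, I will build a supersolution of the form $(1+|y|)^{-1-\alpha}$ for the operator $\partial_\tau-\mathcal{L}$ at large $|y|$; together with the bound $|h^{(n)}|\lesssim \|h^{(n)}\|_{3+\alpha,\tau_{1,n}}\tau^{-\nu}(1+|y|)^{-3-\alpha}$ and the parabolic maximum principle applied on the annular region $\{|y|\geq R_0\}\times[\tau_0,\tau_{1,n}]$ with boundary control coming from Cases 1--2, this yields $|\phi^{(n)}(y,\tau)|\lesssim o(1)\tau^{-\nu}(1+|y|)^{-1-\alpha}$ uniformly, contradicting the normalization at $(y_n,\tau_n)$.

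The hard part will be the limiting step in Cases 1--2: producing a usable non-degeneracy statement for $\mathcal{L}$ in the weighted class $(1+|y|)^{-1-\alpha}$ under the seven orthogonality conditions of Lemma \ref{l5555}. This rests on Brendle's kernel classification together with the self-adjointness and positivity of $\mathcal{L}$ on the orthogonal complement of the kernel; the decay rate $1+\alpha<2$ is chosen precisely so that any element of $\ker\mathcal{L}$ with decay $(1+|y|)^{-1-\alpha}$ must lie in $\mathrm{span}\{Z^0,\dots,Z^7\}$ and is therefore excluded by the orthogonality. Once this linear theory is in place, Case 3 is a barrier/maximum principle exercise and the conclusion of Proposition \ref{proposition5.1} (the gradient bound in \eqref{e5:100}) follows from \eqref{e5:35} via interior parabolic Schauder estimates.
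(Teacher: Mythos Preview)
Your overall contradiction/blow-up scheme matches the paper's. Cases~1 and~2 are essentially what the paper does (the paper folds your Case~1 into Case~2 by noting that, with zero initial data and $\|h^{(n)}\|\to 0$, bounded $\tau_n$ is impossible; your limit-plus-uniqueness argument accomplishes the same thing). The energy/Liouville step for the ancient solution in Case~2 is exactly the paper's argument: one shows $\int|\bar\phi_\tau|^2$ is nonincreasing, hence $\bar\phi_\tau\equiv 0$, so $\mathcal L[\bar\phi]=0$, and then the Atiyah--Hitchin--Singer/Brendle nondegeneracy plus orthogonality forces $\bar\phi=0$.

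The substantive divergence is Case~3. The paper does \emph{not} use a barrier/maximum-principle argument. Instead it performs a spatial rescaling
\[
\tilde\phi^n(z,\tau)=(\tau_2^n)^\nu|y_n|^{1+\alpha}\phi^n\bigl(|y_n|z,\ |y_n|^{-2}\tau+\tau_2^n\bigr),
\]
so that the lower-order coefficients of $\mathcal L$ (which are $O(|y|^{-1})$ and $O(|y|^{-2})$) vanish in the limit, leaving the \emph{decoupled} scalar heat equation $\tilde\phi_\tau=\Delta\tilde\phi$ on $(\mathbb R^4\setminus\{0\})\times(-\infty,0]$ with the bound $|\tilde\phi(z,\tau)|\le |z|^{-1-\alpha}$. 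A Liouville theorem for the heat equation with singular decay (Theorem~5.1 of \cite{LiDongSire}) then gives $\tilde\phi=0$.

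Your barrier proposal has a genuine gap as written: $\mathcal L$ acts on $\mathfrak{su}(2)$-valued $1$-forms and is a coupled system, so there is no direct notion of ``supersolution of the form $(1+|y|)^{-1-\alpha}$ for $\partial_\tau-\mathcal L$'' and no comparison principle. The route can be salvaged, but you must first pass through a Kato-type inequality: writing $u=|\phi|^2$, the first-order coupling terms $[B_i,\partial_i\phi_j]$ contribute $O(|y|^{-1})|\phi||\nabla\phi|$, which after Young's inequality is absorbed by $-2|\nabla\phi|^2$, yielding a scalar differential inequality $(\partial_\tau-\Delta)u\le C|y|^{-2}u + 2|h||\phi|$ for $|y|$ large. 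Only then can you run a scalar barrier on the exterior region. If you take this route, you should also justify the maximum principle on the unbounded domain $\{|y|\ge R_0\}$ (the decay of $\phi$ suffices) and make precise how Cases~1--2 supply the inner boundary control at $|y|=R_0$ uniformly in $n$. The paper's rescaling argument sidesteps all of this by killing the coupling in the limit.
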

\begin{proof}
Suppose $\|h\|_{3+\alpha, \nu} < +\infty $ and $\phi$ is a solution of problem (\ref{e:linearproblem}).
Given $\tau_1>\tau_0$, we then have $\|\phi\|_{1+\alpha, \tau_1} < +\infty$ and from Lemma \ref{l5555}, there holds
\begin{equation}\label{orthognal111}
\int_{B_{2R}}\sum_{i=1}^4\phi_i(y, \tau)Z_i^j(y)dy = 0  \text{ for all }\tau\in (\tau_0,\tau_1),\,\, j=0, 1, 2,\cdots, 7.
\end{equation}
Therefore we need to prove that there is a constant $C > 0$ such that, if $\tau_1>\tau_0$ is large enough,  then any solution $\phi$ of (\ref{e:linearproblem}) with properties $\|\phi\|_{1+\alpha,\tau_1}<+\infty$ and (\ref{orthognal111}) satisfies the estimate
\begin{equation*}\label{esti01}
\|\phi\|_{1+\alpha,\tau_1}\leq C\|h\|_{3+\alpha,\tau_1}.
\end{equation*}

By contradiction, we assume that there exist sequences $\tau_1^n\to +\infty$, $\phi^n$ and $h^n$ satisfying the following
\begin{equation*}
\begin{aligned}
\partial_\tau \phi^n &= \mathcal L[\phi^n] + h^n \text{ in }\mathbb{R}^4 \times [\tau_0, \tau_1^n),\\
\int_{\mathbb{R}^4}\sum_{i=1}^4\phi^n_i(y, \tau)\cdot Z^{j}_idy &=0\text{ for all }\tau\in (\tau_0,\tau_1^n),\,\,j = 0, 1, 2, \cdots, 7,\\
\phi^n(y,\tau_0)& = 0\text{ in }\mathbb{R}^4.
\end{aligned}
\end{equation*}
and
\begin{equation}\label{e6666}
\|\phi^n\|_{1+\alpha,\tau_1^n}=1,\,\,\|h^n\|_{3+\alpha,\tau_1^n}\to 0.
\end{equation}
First, we claim that there holds
\begin{equation}\label{e5:37}
\sup_{\tau_0 <\tau <\tau_1^n}\tau^\nu |\phi^n(y,\tau)|\to 0
\end{equation}
uniformly on compact subsets of $\mathbb{R}^4$.

Indeed, if (\ref{e5:37}) is not true, then there is a sequence of points $\{y_n\}$ on $\mathbb{R}^4$ satisfying $|y_n|\leq M$ and a sequence $\{\tau_2^n\}$ satisfying $\tau_0 < \tau_2^n < \tau_1^n$, such that
\begin{equation*}
(\tau_2^n)^\nu|y_n|^{\alpha+1}|\phi^n(y_n,\tau_2^n)|\geq \frac{1}{2}.
\end{equation*}
Then we have $\tau_2^n\to +\infty$. Now we define
\begin{equation*}
\bar{\phi}^n(y,t) = (\tau_2^n)^\nu \phi^n(y,\tau_2^n+t).
\end{equation*}
Then $\bar\phi^n$ satisfies the following equation
\begin{equation*}
\partial_t\bar{\phi}^n = \mathcal L[\bar{\phi}^n] + \bar{h}^n\text{ in }\mathbb{R}^4\times (\tau_0-\tau_2^n,0].
\end{equation*}
Here $\bar{h}^n(y, t): = (\tau_2^n)^\nu h^n(y,\tau_2^n+t)\to 0$ uniformly on compact subsets of $\mathbb{R}^4\times (-\infty, 0]$, furthermore, there holds
\begin{equation*}
|\bar{\phi}^n(y,t)|\leq \frac{1}{1+|y|^{\alpha+1}}\text{ in }\mathbb{R}^4\times (\tau_0-\tau_2^n,0].
\end{equation*}
Using the dominated convergence theorem, we know that there exists $\bar\phi$ such that $\bar{\phi}^n\to\bar{\phi}\neq 0$ uniformly on compact subsets of $\mathbb{R}^4\times (-\infty, 0]$ and satisfies the relations,
\begin{equation}\label{e5:103}
\begin{aligned}
\partial_t\bar{\phi} & = \mathcal L[\bar{\phi}]\text{ in }\mathbb{R}^4\times (-\infty, 0],\\
\int_{\mathbb{R}^4}\sum_{i=1}^4\bar{\phi}_i(y, t)\cdot Z^j_i(y)dy & = 0\text{ for all } t\in (-\infty, 0],\,\, j= 0, 1, 2,\cdots, 7,\\
|\bar{\phi}(y,t)|& \leq \frac{1}{1+|y|^{\alpha+1}}\text{ in }\mathbb{R}^4\times (-\infty, 0],\\
\bar{\phi}(y,t_0) & = 0,\,\, y\in \mathbb{R}^4.
\end{aligned}
\end{equation}
Now we prove that $\bar{\phi} = 0$, which is a contradiction. Indeed, there holds
\begin{equation*}
\frac{1}{2}\partial_t\int_{\mathbb{R}^4}|\bar{\phi}_t|^2 + B(\bar{\phi}_t, \bar{\phi}_t) = 0,
\end{equation*}
with
\begin{equation*}
B(\bar{\phi}, \bar{\phi}) = \int_{\mathbb{R}^4}\mathcal L[\bar{\phi}]\cdot \bar{\phi}dy.
\end{equation*}
Since $\int_{\mathbb{R}^4}\sum_{i=1}^4\bar{\phi}_i(y, t)\cdot Z^j_i(y)dy = 0$ for all $t\in (-\infty, 0]$, $j= 0,\cdots, 7$, and the quadratic form is nonnegative $B(\bar{\phi}, \bar{\phi})\geq 0$, we know $\partial_t\int_{\mathbb{R}^4}|\bar{\phi}_t|^2dy \leq 0$. Also, there holds that
\begin{equation*}
\int_{\mathbb{R}^4}|\bar{\phi}_t|^2dy = -\frac{1}{2}\partial_t B(\bar{\phi}, \bar{\phi}).
\end{equation*}
Hence we have
\begin{equation*}
\int_{-\infty}^0dt\int_{\mathbb{R}^4}|\bar{\phi}_t|^2dy < +\infty.
\end{equation*}
Therefore $\bar{\phi}_t = 0$, $\bar{\phi}$ is independent of $t$ and it holds that $\mathcal L[\bar{\phi}] = 0$. Since $\bar\phi$ satisfies the estimate $|\bar{\phi}(y,t)|\leq |y|^{\alpha+1}$, using the non-degeneracy result of Atiyah-Hitchin-Singer \cite{AtiyahHitchinSinger1978} (see also \cite{Brendle2003} and \cite{IsobeMariniJMP}), from Lemma \ref{nondegeneracy}, $\bar{\phi}$ is a linear combination of the 1-forms $Z^j$ defined in Section 2, $j = 0,\cdots, 7$. But since we also have  $\int_{\mathbb{R}^4}\sum_{i=1}^4\bar{\phi}_i(y, t)\cdot Z^j_i(y)dy = 0$, $j = 0,\cdots, 7$, we get $\bar{\phi} = 0$. This is a contradiction, therefore (\ref{e5:37}) holds.

From the assumption (\ref{e6666}), there exists a sequence $\{y_n\}$ satisfying $|y_n|\to\infty$ and
$$
(\tau_2^n)^\nu|y_n|^{1+\alpha}|\phi^n(y_n, \tau_2^n)|\geq\frac{1}{2}.
$$
Now we define
$$
\tilde \phi^n(z,\tau):=(\tau_2^n)^\nu|y_n|^{1+\alpha}\phi^n(|y_n|z, |y_n|^{-2}\tau+\tau_2^n)
$$
then we have
$$
\partial_\tau\tilde\phi^n =\Delta_z\tilde\phi^n+b_n\cdot\nabla\tilde\phi^n+c_n\tilde\phi^n+\tilde h^n(z,\tau)
$$
with
$$
\tilde h^n(z,\tau)=(\tau_2^n)^\nu|y_n|^{3+\alpha}h^n(|y_n|z, |y_n|^{-2}\tau+\tau_2^n).
$$
By assumption (\ref{e6666}), there holds
$$
|\tilde h^n(z,\tau)|\leq o(1)|z|^{-3-\alpha}((\tau_2^n)^{-1}|y_n|^{-2}\tau+1)^{-\nu}
$$
Hence $\tilde h^n(z,\tau)\to 0$ uniformly on compact subsets of $\mathbb{R}^4 \setminus\{0\}\times (-\infty ,0]$. Similarly, we have $b_n\to 0$ and $c_n\to 0$ uniformly on compact subsets of $\mathbb{R}^4 \setminus\{0\}\times (-\infty ,0]$. Furthermore, there holds $|\tilde\phi^n(\frac{y_n}{|y_n|},0)|\ge \frac{1}{2}$ and
$$
|\tilde\phi^n(z,\tau)|\leq|z|^{-1-\alpha}((\tau_2^n)^{-1}|y_n|^{-2}\tau+1)^{-\nu} .
$$
Therefore we have $\tilde\phi^n\to\tilde\phi\neq 0$ uniformly on compact subsets of $\mathbb{R}^4\setminus\{0\}\times (-\infty, 0]$ and $\tilde\phi$ satisfies the following
$$
\tilde\phi_\tau=(d^*d + dd^*)\tilde\phi\text{ in }\mathbb{R}^4\setminus\{0\}\times (-\infty ,0],
$$
$$
|\tilde\phi(z, \tau)|\leq |z|^{-1-\alpha}\text{ in }\mathbb{R}^4\setminus\{0\}\times (-\infty ,0].
$$
Let us set $\tilde{\phi} = \sum_{i=1}^4\tilde{\phi}_i(z, t)dx_i$, then for $i = 1, 2, 3, 4$, $\tilde\phi_i(z, t)$ satisfies
$$
(\tilde{\phi}_i)_\tau=\Delta \tilde{\phi}_i \text{ in }\mathbb{R}^4\setminus\{0\}\times (-\infty ,0]
$$
and
$$
|\tilde{\phi}_i(z, \tau)|\leq |z-\hat e|^{-\alpha-1}\text{ in }\mathbb{R}^4\setminus\{0\}\times (-\infty ,0].
$$
Then from Theorem 5.1 of \cite{LiDongSire}, we know that $\tilde{\phi}_i(z, t) = 0$ hence $\tilde\phi = 0$, which a contradiction. This completes the proof.
\end{proof}

{\bf Proof of Proposition \ref{proposition5.1}}
Let $\phi(y,\tau)$ be the unique solution of the following Cauchy problem
\begin{equation*}
\left\{
\begin{aligned}
&\partial_\tau\phi = \mathcal L[\phi] + h(y,\tau),\,\,y\in \mathbb{R}^4,\,\,\tau\geq \tau_0,\\
&\phi(y,\tau_0) = 0,\,\,y\in \mathbb{R}^4,\\
&\lim_{|y|\to +\infty}|\phi(y, \tau)|\to 0 \text{ for all }\tau\geq \tau_0.
\end{aligned}
\right.
\end{equation*}
For any $\tau_1 > \tau_0$, by Lemma \ref{l5:3}, there holds
\begin{equation*}
|\phi(y,\tau)|\lesssim\tau^{-\nu}(1+|y|)^{-a-1}\|h\|_{3+a, \tau_1}\text{ for all }\tau\in (\tau_0, \tau_1), \,\,y\in \mathbb{R}^4.
\end{equation*}

From the assumption that $\|h\|_{3+a,\nu} < +\infty$, we have $\|h\|_{3+a, \tau_1}\leq \|h\|_{3+a,\nu}$ for any $\tau_1 > \tau_0$. Therefore
\begin{equation*}
|\phi(y,\tau)|\lesssim\tau^{-\nu}(1+|y|)^{-a-1}\|h\|_{3+a,\nu}\text{ for all }\tau\in (\tau_0, \tau_1),\,\, y\in \mathbb{R}^4.
\end{equation*}
Since $\tau_1$ is arbitrary, we have
\begin{equation*}
|\phi(y,\tau)|\lesssim\tau^{-\nu}(1+|y|)^{-a-1}\|h\|_{3+a,\nu}\text{ for all }\tau\in (\tau_0, +\infty),\,\, y\in \mathbb{R}^4.
\end{equation*}
The estimate for the gradient of $\phi$ follows from standard parabolic regularity results and a scaling argument, thus we get (\ref{e5:100}).\qed

\section{Proof of Proposition \ref{l4:lemma4.1}}
 
Set $\psi(x, t) = \bar\psi\left(\frac{x-\xi(t)}{\mu_0(t)}, \tau(t)\right)$ with $\frac{dt}{d\tau} = \mu_{0}^{2}(t)$. Then $\bar\psi$ satisfies the following heat equation
\begin{equation}\label{heatequationselfsimilar}
\begin{cases}
\partial_\tau\bar\psi =
\Delta_y\bar\psi + \bar V(y, \tau)\bar\psi + \bar f(y, \tau)\quad&\text{in}\,\,\mathbb{R}^4\times (\tau_0, \infty),\\
\lim_{|y|\to +\infty}\bar\psi(y, \tau) = 0\quad&\text{for all}\,\, \tau\in (\tau_0,\infty).
\end{cases}
\end{equation}
Here $\bar V(y, \tau) \sim (1-\eta_R(y))\frac{1}{1+|y|^2}$ and $|\bar f(y, \tau) |\leq M\frac{\tau^{-\nu}}{1+| y|^{2+\alpha}}$ with $\tau = \frac{1}{2\kappa_0}e^{2\kappa_0 t}$ and $\nu = \frac{\beta}{2}$. Using the heat kernel estimates (see e.g. Corollary 2 of \cite{CoulhonZhang} and Theorem 16 of \cite{DaviesSimon}), we know the solution of (\ref{heatequationselfsimilar}) can be estimate as 
\begin{equation}\label{solutiongivenbyheatkernel}
|\bar\psi(y, \tau)| \lesssim \int_{\tau_0}^{\tau}\int_{\mathbb R^4}\frac{1}{(\tau-s)^{2-\kappa}}e^{-\frac{|y-u|^2}{\tau-s}}\,\bar f(u, s)duds,
\end{equation}
$\kappa > 0$ is a small constant. 
Then we have
\begin{equation*}
\begin{split}
|\bar \psi(y, \tau)|
\lesssim&\,\|f\|_{*,\beta,2+\alpha }\int_{\tau_0}^{\tau}\int_{\mathbb R^4}\frac{1}{(\tau-s)^{2-\kappa}}e^{-\frac{|y-u|^2}{\tau-s}}\frac{s^{-\nu}} { 1+ \left|{u}\right|^{2+\alpha} }duds
\\
=&\,\|f\|_{*,\beta,2+\alpha }\int_{\tau/2}^{\tau}\int_{\mathbb R^4}\frac{1}{(\tau-s)^{2-\kappa}}e^{-\frac{|y-u|^2}{\tau-s}}\frac{s^{-\nu}} { 1+ \left|{u}\right|^{2+\alpha} }duds
\\
&\,+\|f\|_{*,\beta,2+\alpha }\int_{\tau_0}^{\tau/2}\int_{\mathbb R^4}\frac{1}{(\tau-s)^{2-\kappa}}e^{-\frac{|y-u|^2}{\tau-s}}\frac{s^{-\nu}} { 1+ \left|{u}\right|^{2+\alpha} }duds
\\
:=&\,I_1+I_2.
\end{split}
\end{equation*}
To estimate the term $I_1$, we use the variable transformation $p=\frac{|y-u|}{\sqrt{\tau-s}}$, $\frac{ds}{2(\tau-s)}=\frac{1}{p}dp$, then we have
\begin{equation*}
\begin{split}
I_1&=\|f\|_{*,\beta,2+\alpha }\int_{\tau/2}^{\tau}\int_{\mathbb R^4}\frac{1}{(\tau-s)^{2-\kappa}}e^{-\frac{|y-u|^2}{\tau-s}}\frac{s^{-\nu}} { 1+ \left|{u}\right|^{2+\alpha} }duds\\
&\leq\|f\|_{*,\beta,2+\alpha }\tau^{-\nu}\int_{\tau/2}^{\tau}\int_{\mathbb R^4}\frac{1}{(\tau-s)^{2-\kappa}}e^{-\frac{|y-u|^2}{\tau-s}}\frac{1} { 1+ \left|{u}\right|^{2+\alpha} }duds
\\
&=2\|f\|_{*,\beta,2+\alpha }\tau^{-\nu}\int_{\mathbb R^4}\frac {1} { |y-u|^{2-2\kappa}(1+|{u}|^{2+\alpha})}du\int_{\frac{|y-u|}{\sqrt{\tau/2}}}^{+\infty}p^{1-2\kappa}e^{-p^2}   \,dp
\\
& \lesssim \|f\|_{*,\beta,2+\alpha }\tau^{-\nu}\int_{\mathbb R^4}\frac {1} { |y-u|^{2-2\kappa}(1+|{u}|^{2+\alpha})}du.
\end{split}
\end{equation*}
Now we estimate
\begin{equation*}
\begin{split}
\int_{\R^4}\frac{1}{|y-u|^{2-2\kappa}}\frac {1} {1+ |u|^{2+\alpha} }du.
\end{split}
\end{equation*}
Fix point $y$ (we assume $|y|\geq 2$) and separate $\mathbb R^4$ as $$B(y,\frac{|y|}{2})\cup B(0,\frac{|y|}{2})\cup \left(\mathbb{R}^4\setminus (B(y,\frac{|y|}{2})\cup B(0,\frac{|y|}{2}))\right):=B_1\cup B_2 \cup B_3.$$ Then we have
\begin{equation*}
\begin{split}
&\int_{\R^4}\frac{1}{|y-u|^{2-2\kappa}}\frac {1} {1+ |u|^{2+\alpha} }du
=\int_{B_1}\frac{1}{|y-u|^{2-2\kappa}}\frac {1} {1+ |u|^{2+\alpha} }du\\
&\quad +\int_{B_2}\frac{1}{|y-u|^{2-2\kappa}}\frac {1} {1+ |u|^{2+\alpha} }du +\int_{B_3}\frac{1}{|y-u|^{2-2\kappa}}\frac {1} {1+ |u|^{2+\alpha} }du\\
&:=K_1+K_2+K_3.
\end{split}
\end{equation*}
If $u\in B_1$, then $\frac{|y|}{2}\leq |u|\leq \frac{3|y|}{2}$, therefore
\begin{equation*}
\begin{split}
K_1&=\int_{B_1}\frac{1}{|y-u|^{2-2\kappa}}\frac {1} {1+ |u|^{2+\alpha} }du
\leq \int_{B_1}\frac{1}{|y-u|^{2-2\kappa}}\frac {1} {1+ (\frac{|y|}{2})^{2+\alpha} }du\\
&\lesssim \frac{1}{|y|^{2+\alpha}}\int_{B_1}\frac{1}{|y-u|^{2}}du
=\frac{1}{|y|^{2+\alpha}}\int_{0}^{\frac{|y|}{2}}r\,dr\lesssim\frac{1}{|y|^{\alpha}}.
\end{split}
\end{equation*}
If $u\in B_2$, then $\frac{|y|}{2}\leq |y-u|\leq \frac{3|y|}{2}$,
\begin{equation*}
\begin{split}
K_2&=\int_{B_2}\frac{1}{|y-u|^{2-2\kappa}}\frac {1} {1+ |u|^{2+\alpha} }du
\lesssim\ \int_{B_2}\frac{1}{|y|^{2-2\kappa}}\frac {1} {1+ |u|^{2+\alpha} }du
\\
&\lesssim\
\frac{1}{|y|^{2-2\kappa}}\int_{0}^{\frac{|y|}{2}}\frac {1} {1 + r^{2+\alpha} }r^3\,dr
\lesssim\frac{1}{|y|^{2-2\kappa}}\left(1+|y|^{2-\alpha}\right)\lesssim\frac{1}{|y|^{\alpha-2\kappa}}=\frac{1}{|y|^{\alpha'}}.
\end{split}
\end{equation*}
Here $\alpha' = \alpha-2\kappa\in (0, \alpha)$ since $\kappa > 0$ can be chosen small enough.
We estimate the term $K_3$ as follows,
\begin{equation*}
\begin{split}
K_{3}&=\int_{B_{3}}\frac{1}{|y-u|^{2-2\kappa}}\frac {1} {1+ |u|^{2+\alpha} }du\lesssim\ \int_{B_{3}}\frac{1}{|u|^{2-2\kappa}}\frac {1} {1+ |u|^{2+\alpha} }du
\\
&
\lesssim\ \int_{\frac{|y|}{2}}^{\infty}\frac{r^{1+2\kappa}}{r^{2+\alpha}}dr\lesssim\frac{1}{|y|^{\alpha-2\kappa}}=\frac{1}{|y|^{\alpha'}}.
\end{split}
\end{equation*}
From the above estimates, we get
\begin{equation*}
\begin{split}
\int_{\R^4}\frac{1}{|y-u|^{2}}\frac {1} {1+ |u|^{2+\alpha} }du\lesssim \frac{1}{|y|^{\alpha'}}
\end{split}
\end{equation*}
for $\alpha'\in (0, \alpha)$.
Hence we have
\begin{equation*}
\begin{split}
I_1&\lesssim\|f\|_{*,\beta,2+\alpha }\tau^{-\nu}\int_{\mathbb R^4}\frac {1} { |y-u|^{2-2\kappa}(1+|{u}|^{2+\alpha})}du
\\
&\lesssim \|f\|_{*,\beta,2+\alpha }\tau^{-\nu}\frac{1}{|y|^{\alpha'}}.
\end{split}
\end{equation*}
Now we estimate the second term $$I_2=\|f\|_{*,\beta,2+\alpha }\int_{\tau_0}^{\frac{\tau}{2}}\int_{\mathbb R^4}\frac{1}{(\tau-s)^{2-\kappa}}e^{-\frac{|y-u|^2}{\tau-s}}\frac{s^{-\nu}} { 1+ \left|{u}\right|^{2+\alpha} }duds.$$ We have
% \begin{equation*}
% \begin{split}
% I_2
% &\lesssim \tau^{\kappa}\|f\|_{*,\beta,2+\alpha }\int_{\tau_0}^{\frac{\tau}{2}}\frac{s^{-\nu}} {(\tau-s)^{1+\frac{\alpha}{2}}}ds
% \\
% &\lesssim \tau^{\kappa}\|f\|_{*,\beta,2+\alpha }\frac{1} {\tau^{1+\frac{\alpha}{2}}}\int_{\tau_0}^{\frac{\tau}{2}}s^{-\nu}ds\\
% &\lesssim
% \tau^{-\nu-\frac{\alpha}{2}+\kappa}\|f\|_{*,\beta,2+\alpha }\\
% &\lesssim
% \left\{
% \begin{aligned}
% &\|f\|_{*,\beta,2+\alpha}\frac{\tau^{-\nu}}{1+|y|^{\alpha'}}, \quad |y|\leq \sqrt{\tau}\\
% &\|f\|_{*,\beta,2+\alpha}\tau^{-\nu-\frac{\alpha}{2}}, \quad |y| > \sqrt{\tau}\\
% \end{aligned}
% \right..
% \end{split}
% \end{equation*}
\begin{equation*}
\begin{split}
I_2&=\|f\|_{*,\beta,2+\alpha }\int_{\tau_0}^{\frac{\tau}{2}}\int_{\mathbb R^4}\frac{1}{(\tau-s)^{2-\kappa}}e^{-\frac{|y-u|^2}{\tau-s}}\frac{s^{-\nu}} { 1+ \left|{u}\right|^{2+\alpha} }duds
\\
&\lesssim \tau^{\kappa-1-\frac{\alpha}{2}}\|f\|_{*,\beta,2+\alpha }\int_{\tau_0}^{\frac{\tau}{2}}\int_{\mathbb R^4}e^{-|\frac{y}{\tau-s}-u|^2}\frac{s^{-\nu}} {\left|{u}\right|^{2+\alpha} }duds
\\
&=\tau^{\kappa-1-\frac{\alpha}{2}}\|f\|_{*,\beta,2+\alpha }\int_{\tau_0}^{\frac{\tau}{2}}\int_{|u|> 2|\frac{y}{\tau-s}|}e^{-|\frac{y}{\tau-s}-u|^2}\frac{s^{-\nu}} {\left|{u}\right|^{2+\alpha} }duds
\\
&\quad + \tau^{\kappa-1-\frac{\alpha}{2}}\|f\|_{*,\beta,2+\alpha }\int_{\tau_0}^{\frac{\tau}{2}}\int_{|u|\leq 2|\frac{y}{\tau-s}|}e^{-|\frac{y}{\tau-s}-u|^2}\frac{s^{-\nu}} {\left|{u}\right|^{2+\alpha} }duds\\
& := I_{21}+I_{22}
\end{split}
\end{equation*}
and
\begin{equation*}
\begin{split}
I_{21} & =\tau^{\kappa-1-\frac{\alpha}{2}}\|f\|_{*,\beta,2+\alpha }\int_{\tau_0}^{\frac{\tau}{2}}\int_{|u|> 2|\frac{y}{\tau-s}|}e^{-|\frac{y}{\tau-s}-u|^2}\frac{s^{-\nu}} {\left|{u}\right|^{2+\alpha} }duds
\\
&\lesssim \tau^{\kappa-1-\frac{\alpha}{2}}\|f\|_{*,\beta,2+\alpha }\int_{\tau_0}^{\frac{\tau}{2}}\int_{|u|> 2|\frac{y}{\tau-s}|}e^{-|u|^2/4}\frac{s^{-\nu}} {\left|{u}\right|^{2+\alpha} }duds\\
&\lesssim
\tau^{-\nu-\frac{\alpha}{2}+\kappa}\|f\|_{*,\beta,2+\alpha }\\
&\lesssim
\left\{
\begin{aligned}
&\|f\|_{*,\beta,2+\alpha}\frac{\tau^{-\nu}}{1+|y|^{\alpha'}}, \quad |y|\leq \sqrt{\tau}\\
&\|f\|_{*,\beta,2+\alpha}\tau^{-\nu-\frac{\alpha}{2}}, \quad |y| > \sqrt{\tau}\\
\end{aligned}
\right.,
\end{split}
\end{equation*}
\begin{equation*}
\begin{split}
I_{22} & \lesssim  \tau^{\kappa-1-\frac{\alpha}{2}}\|f\|_{*,\beta,2+\alpha }\int_{\tau_0}^{\frac{\tau}{2}}\int_{|u|\leq 2|\frac{y}{\tau-s}|}e^{-|\frac{y}{\tau-s}-u|^2}\frac{s^{-\nu}} {\left|{u}\right|^{2+\alpha} }duds\\
&\lesssim\tau^{-(2-\kappa)+1-\nu}\|f\|_{*,\beta,2+\alpha }\\
&\lesssim
\left\{
\begin{aligned}
&\|f\|_{*,\beta,2+\alpha}\frac{\tau^{-\nu}}{1+|y|^{\alpha'}}, \quad |y|\leq \sqrt{\tau}\\
&\|f\|_{*,\beta,2+\alpha}\tau^{-\nu-\frac{\alpha}{2}}, \quad |y| > \sqrt{\tau}\\
\end{aligned}
\right..
\end{split}
\end{equation*}
Here we have used the facts that $\kappa > 0$ is small and $\alpha\in (1, 2)$.

\section{Existence of bubble tower solutions}
In the $SO(4)$-equivariant case, which means that we assume that $$A(x, t) = Im(\frac{x}{2r^2}\psi(r, t)d\bar{x}),$$ 
with $r = |x|$, the Yang-Mills heat flow takes the following form,
\begin{equation}\label{bubbletower}
\frac{\partial }{\partial t}\psi = \psi_{rr} + \frac{1}{r}\psi_r -\frac{2}{r^2}(\psi-1)(\psi-2)\psi.
\end{equation}
This equation has a one-parameter family of finite energy stationary solutions, namely
\begin{equation}
\frac{2r^2}{r^2+\lambda^2},\quad \lambda\in\mathbb{R}^+.
\end{equation}
In this section, we construct a bubble tower solution for (\ref{bubbletower}).

\subsection{Construction of approximate solutions.}
If we use the transformation $\bar\psi = r^{-2}\psi$, then (\ref{bubbletower}) becomes the following heat equation
\begin{equation}\label{bubbletower111}
\frac{\partial }{\partial t}\bar\psi = \bar\psi_{rr} + \frac{5}{r}\bar\psi_r +(6-2r^2\bar\psi)\bar\psi^2
\end{equation}
with steady solution $\bar\psi_0(r) = \frac{2}{r^2+\lambda^2}$.
We write the first approximation of $\bar{\psi}(r, t)$ as
$$
\bar U(r, t) = U_*(r, t)+\eta_0\left(\frac{r}{\sqrt{\mu_2(t)}}\right)\frac{1}{\mu_2(t)^2}U\left(\frac{r}{\mu_2(t)}\right)
$$
with
$$
U(r) = \frac{2}{r^2+1}
$$
and $U_*(r, t)$ is the one bubble solution of (\ref{bubbletower}) constructed in Theorem \ref{t:main}. Then $U_*(r, t)$ has the following form
$$ U_*(r, t) = \mu_1^{-2}U\left(\frac{r}{\mu_1(t)}\right)+\varphi(r, t),$$
with $\mu_1 = e^{-c_1t}+o(e^{-c_1t}): = \mu_{01}+o(e^{-c_1t})$ for a positive number $c_1 > 0$ and $\varphi(r, t)$ is a perturbation term.

In the following, we also set $$U_2(r, t):=\eta_0\left(\frac{r}{\sqrt{\mu_2(t)}}\right)\frac{1}{\mu_2(t)^2}U\left(\frac{r}{\mu_2(t)}\right).$$
Let us define $\bar\mu_{02}(t):= \sqrt{\mu_{01}\mu_{02}}$ (we also define $\bar\mu_{01} = t^{\delta}$ with $\delta > 0$ be a small constant.) and the following cut off functions
$$
\chi(r, t) = \eta_0\left(\frac{2r}{\bar\mu_{02}(t)}\right).
$$
Note that $\chi(r) = 1$ for $r\leq \frac{1}{2}\bar\mu_{02}(t)$ and $\chi(r, t) = 0$ for $r\geq \bar\mu_{02}(t)$. We are looking for a correction term
of the form
$$
\varphi_0(r, t) = \mu_2^{-2}\phi_0\left(\frac{r}{\mu_2(t)}, t\right)\chi(r, t) := \varphi_{02}(r, t)\chi(r, t),
$$
which will be suitably determined later. Let us write
$$
S(\bar U + \varphi_0) = S(\bar U) + L_{\bar U}[\varphi_0] + N_{\bar U}[\varphi_0]
$$
where
$$
L_{\bar U}[\varphi_0] = -\frac{\partial }{\partial t}\varphi_0 + (\varphi_0)_{rr} + \frac{5}{r}(\varphi_0)_r + g'(\bar U)\varphi_0,
$$
$$
N_{\bar U}[\varphi_0] = g(\bar U + \varphi_0) - g'(\bar U)\varphi_0 - g(\bar U)
$$
and
\begin{equation*}
\begin{aligned}
S(\bar U) & = -\partial_tU_2 + g(\bar U) - g(U_2) - g(U_*),
\end{aligned}
\end{equation*}
\begin{equation*}
g(\bar U):=(6-2r^2\bar U)\bar U^2.
\end{equation*}
Observe that
\begin{equation*}
\begin{aligned}
L_{\bar U}[\varphi_0] & = (\varphi_{02})_{rr}\chi + \frac{5}{r}(\varphi_{02})_r\chi + g'(\bar U)\varphi_{02}\chi\\
& \quad  + 2(\varphi_{02})_{r}\chi_r + \frac{5}{r}\varphi_{02}\chi_r-\frac{\partial }{\partial t}(\varphi_{02}\chi).
\end{aligned}
\end{equation*}
Therefore, we have
\begin{equation*}
\begin{aligned}
&S(\bar U + \varphi_0)\\
& = \left((\varphi_{02})_{rr} + \frac{5}{r}(\varphi_{02})_r + g'(U_2)\varphi_{02} - \partial_tU_2 + g'(U_2)U_*(0)\right)\chi\\
&\quad  +\bar E_{11} + \left(g'(\bar U)-g'(U_2)\right)\varphi_{02}\chi + N_{\bar U}[\varphi_0] + 2(\varphi_{02})_{r}\chi_r + \frac{5}{r}\varphi_{02}\chi_r-\frac{\partial }{\partial t}(\varphi_{02}\chi).
\end{aligned}
\end{equation*}
Here the term $\bar E_{11}$ is defined by
\begin{equation*}
\begin{aligned}
\bar E_{11}& = -(1-\chi)\partial_tU_2  + g(\bar U) - g(U_2) - g(U_*) - g'(U_2)U_*(0)\chi.
\end{aligned}
\end{equation*}
Let us define
\begin{equation*}
\begin{aligned}
& E[\phi_0, \mu_2]: = (\varphi_{02})_{rr} + \frac{5}{r}(\varphi_{02})_r + g'(U_2)\varphi_{02} - \partial_tU_2 + g'(U_2)U_*(0)\\
& = \frac{1}{\mu^4_2}[(\phi_0)_{yy} + \frac{5}{y}(\phi_0)_y + (12W_0(y)-6y^2W^2_0(y))\phi_0+\mu_2\dot\mu_2 \eta_{0}(\sqrt{\mu_2}y)Z(y)\\
&\quad\quad\quad\quad-\frac{1}{2}(\eta_0)_ryW(y)(\mu_2)^{3/2}\dot\mu_2+(12W_0(y)-6y^2W^2_0(y))\left(\frac{\mu_2}{\mu_1}\right)^2U(0)]|_{y = \frac{r}{\mu_2(t)}}.
\end{aligned}
\end{equation*}
Here $W_0(y) = \eta_{0}(\sqrt{\mu_2}y)W(y)$ and $Z(y) : = \frac{4}{(1+|y|^2)^2}$.
To solve this equation, we need the following orthogonality condition
\begin{equation*}
\int_{0}^{+\infty}\left(\mu_2\dot\mu_2 Z(y)+(12W(y)-6y^2W^2(y))\left(\frac{\mu_2}{\mu_1}\right)^2U(0)\right)Z(y)y^5dy = 0,
\end{equation*}
which is equivalent to
\begin{equation*}
\mu_2\dot\mu_2 + c_* \left(\frac{\mu_2}{\mu_1}\right)^2 = 0,\quad c_* = \frac{\int_{0}^{+\infty}(12W(y)-6y^2W^2(y))U(0)Z(y)y^5dy}{\int_{0}^{+\infty}Z(y)Z(y)y^5dy} > 0.
\end{equation*}
The solution to this equation is
$$
\mu_{02}(t) = e^{-\frac{c_*e^{2c_1 t}}{2c_1}}.
$$
Finally, we set $\mu_2 = \mu_{02} + \mu_{12}$. Here the parameters $\mu_{12}$ is to be determined for some small but fixed $\sigma > 0$,
$$
\mu_{02}|\dot\mu_{12}|\leq \lambda_{02}^2(t)\mu_{02}^{\sigma}(t), \quad  \lambda_{02}:= \frac{\mu_{02}}{\mu_{01}},
$$
which implies $\lim_{t\to+\infty}\frac{\mu_{12}}{\mu_{02}} = 0$. It is convenient to write $\lambda_2(t): = \frac{\mu_{2}(t)}{\mu_{1}(t)} = \lambda_{02}(t) + \lambda_{12}(t)$. Thus we obtain the following result.
\begin{lemma}
\begin{equation*}
\begin{aligned}
&\frac{1}{\mu^4_2}\left[(\mu_2\dot\mu_2-\mu_{02}\dot\mu_{02}) Z(y)+(12W(y)-6y^2W^2(y))\left(\left(\lambda_2\right)^2-\left(\lambda_{02}\right)^2\right)U(0)\right]\\
& =  \frac{1}{\mu^4_2}D_2 +  \frac{1}{\mu^4_2}\Theta_2
\end{aligned}
\end{equation*}
with
\begin{equation*}
\begin{aligned}
D_2: = (\dot \mu_{02}\mu_{12} + \mu_{02}\dot \mu_{12})Z(y)+2(12W(y)-6y^2W^2(y))\left(\lambda_{02}\right)^2\frac{\mu_{12}}{\mu_{02}}U(0)
\end{aligned}
\end{equation*}
\begin{equation*}
\begin{aligned}
\Theta_2: & = \partial_t(\mu_{12}^2)Z(y)+(12W(y)-6y^2W^2(y))\left(\lambda_{02}\right)^2\left(\frac{\mu_{12}}{\mu_{02}}\right)^2U(0).
\end{aligned}
\end{equation*}
\end{lemma}  
We also define
$$
u^* = \bar U + \varphi_0,
$$
with $\varphi_0(r, t) = \mu_{02}^{-2}\phi_0\left(\frac{r}{\mu_{02}(t)}, t\right)\chi(r, t)$ and $\phi_0(y, t)$ satisfies the following equation
\begin{equation*}
\begin{aligned}
(\phi_0)_{yy} + \frac{5}{y}(\phi_0)_y & + (12W(y)-6y^2W^2(y))\phi_0+\mu_{02}\dot\mu_{02} Z(y)\\
& +(12W(y)-6y^2W^2(y))\left(\frac{\mu_{02}}{\mu_{01}}\right)^2U(0) = 0.
\end{aligned}
\end{equation*}

\subsection{The inner-outer gluing system.}
Let $R$ be a $t$-independent, slowly growing function compared with $\mu_{01}$, say $R = e^{\varrho t_0}, \quad t\geq t_0$
where $\varrho > 0$ is a sufficiently small constant. Consider the cut-off functions $\eta_i$ defined by
\begin{equation*}
\begin{aligned}
\eta_i(r, t) = \eta_0\left(\frac{r}{2R\mu_{0i}(t)}\right),
\end{aligned}
\end{equation*}
\begin{equation*}
\begin{aligned}
\zeta_1(r, t) = \eta_0\left(\frac{r}{R\mu_{01}(t)}\right)-\eta_0\left(\frac{Rr}{\mu_{01}(t)}\right),
\end{aligned}
\end{equation*}
\begin{equation*}
\begin{aligned}
\zeta_2(r, t) = \eta_0\left(\frac{r}{R\mu_{02}(t)}\right),
\end{aligned}
\end{equation*}
We set
$$
\varphi = \varphi_1\eta_1 + \varphi_2\eta_2 + \Psi,
$$
where
$$
\varphi_j(r, t) = \frac{1}{\mu_j^2}\phi_j\left(\frac{r}{\mu_j}, t\right).
$$
With these notations, we have
\begin{equation*}
\begin{aligned}
& S[u^*+\varphi]\\
& = \eta_1\mu_1^{-4}\left[-\mu_1^2\partial_t\phi_1 + (\phi_1)_{rr}+\frac{5}{r}(\phi_1)_r + (12W(y_1)-6y_1^2W^2(y_1))\phi_1\right. \\
&\quad\quad\quad\quad\quad\quad\quad\quad\quad\quad\quad\quad\quad\quad\quad\quad\quad\quad\quad\quad\left.+\zeta_1(12W(y_1)-6y_1^2W^2(y_1))\mu_1^2\Psi\right]\\
&\quad + \eta_2\mu_2^{-4}\left[-\mu_2^2\partial_t\phi_2 + (\phi_2)_{rr}+\frac{5}{r}(\phi_2)_r + (12W(y_2)-6y_2^2W^2(y_2))\phi_2\right.\\
&\quad\quad\quad\quad\quad\quad\quad\quad\quad\quad\quad\quad\quad\quad\quad \quad\quad\quad\left. +\zeta_2(12W(y_2)-6y_2^2W^2(y_2))\mu_2^2\Psi + D_2\right]\\
&\quad -\partial_t\Psi + (\Psi)_{rr}+\frac{5}{r}(\Psi)_r + V\Psi + B[\vec{\phi}] + N(\phi, \Psi, \vec{\mu}) + E^{out}.
\end{aligned}
\end{equation*}
Here
\begin{equation*}
\begin{aligned}
B[\vec{\phi}]& =\sum_{j=1}^2(-\partial_t\eta+\partial_{rr}\eta)\varphi_j + 2(\eta)_r(\varphi_j)_r + \frac{5}{r}(\eta_j)_r\varphi_j - \dot\mu_j\frac{\partial }{\partial\mu_j}\varphi_j\eta_j \\
&\quad\quad\quad  + \left((12u^*-6r^2(u^*)^2)-(12U_2-6r^2U_2^2)\right)\varphi_2\eta_2\\
&\quad\quad\quad  + \left((12u^*-6r^2(u^*)^2)-(12U_*-6r^2U_*^2)\right)\varphi_1\eta_1,
\end{aligned}
\end{equation*}
\begin{equation*}
N(\phi, \Psi, \mu) = N_{u^*}\left(\varphi_1\eta_1 + \varphi_2\eta_2 + \Psi\right),
\end{equation*}
\begin{equation*}
V = (12u^*-6r^2(u^*)^2) - \zeta_2(12U_2-6r^2U^2_2)- \zeta_1(12U_*-6r^2U_*^2),
\end{equation*}
\begin{equation*}
E^{out} = S[u^*] - \eta_2\mu_2^{-4}D_2.
\end{equation*}
Then $S[u^*+\varphi] = 0$ if the following system is satisfied,
\begin{equation}\label{innerproblem1}
-\mu_1^2\partial_t\phi_1 + (\phi_1)_{rr}+\frac{5}{r}(\phi_1)_r + (12W-6y_1^2W^2)\phi_1+\zeta_1(12W-6y_1^2W^2)\mu_1^2\Psi + c(t)Z(y_1) = 0,
\end{equation}
\begin{equation}\label{innerproblem2}
-\mu_2^2\partial_t\phi_2 + (\phi_2)_{rr}+\frac{5}{r}(\phi_2)_r + (12W-6y_2^2W^2)\phi_2+\zeta_2(12W-6y_2^2W^2)\mu_2^2\Psi + D_2 = 0,
\end{equation}
and
\begin{equation}\label{outerproblem}
-\partial_t\Psi + (\Psi)_{rr}+\frac{5}{r}\Psi_r + V\Psi + B[\vec{\phi}] + N(\phi, \Psi, \mu) + E^{out} -\eta_1\mu_1^{-4}c(t)Z(y_1) = 0.
\end{equation}
Observe that $U_*(r, t)$ is the one bubble solution of (\ref{bubbletower}) constructed in the one-bubble case,
Problem (\ref{innerproblem1}) is the linearized problem around $U_*(r, t)$. We adjust the small parameter $c(t)$ to obtain orthogonality.
\subsection{Linear theory for the inner and outer problem.}
For the inner problem, we consider the following problem
\begin{equation}\label{innerproblem1model}
-\partial_\tau\phi + (\phi)_{rr}+\frac{5}{r}(\phi)_r + (12W-6r^2W^2)\phi+f(r, \tau) = 0, \quad (r, \tau)\in (0, 2R)\times [\tau_0, +\infty).
\end{equation}
Then we have
\begin{lemma}\label{bubbletowerinnerlineartheory}
Suppose $\alpha\in (0, 1)$, $\nu > 0$, $\|f\|_{2+\alpha, \nu} < +\infty$ and
\begin{equation}\label{bubbletower:20200301}
\int_{0}^{2R}f(r,\tau)\cdot Z(r)r^5dr = 0\quad\text{for all}\quad\tau\in [\tau_0,+\infty).
\end{equation}
Then, there exists a solution of (\ref{innerproblem1model}) satisfying
\begin{equation}\label{bubbletowerestimate100}
\left|\phi(r,\tau)\right|\lesssim \|f\|_{2+\alpha, \nu}\tau^{-\nu}\frac{R^{6-\alpha}}{1+|r|^6}.
\end{equation}
\end{lemma}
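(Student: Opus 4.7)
\medskip

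\noindent\textbf{Proof proposal.} The plan is to mirror the strategy of Proposition~\ref{proposition5.1}: argue by contradiction using a blow-up procedure, extract a limit solving the homogeneous linearized equation, and invoke a non-degeneracy statement together with the orthogonality hypothesis to force the limit to vanish. The radial 6D operator $L[\phi]=\phi_{rr}+\tfrac{5}{r}\phi_r+(12W-6r^2W^2)\phi$ linearizes the transformed flow (\ref{bubbletower111}) at the ground state $W(y)=\tfrac{2}{y^2(y^2+1)}$, and its only kernel element with the correct decay is the scaling mode $Z(y)=\tfrac{1}{(1+y^2)^2}$; the hypothesis $\int_0^{2R}f(r,\tau)Z(r)r^5dr=0$ is precisely the compatibility condition that removes this direction.

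First, I introduce the weighted norm
$$\|\phi\|_{\sharp,\tau_1}:=\sup_{\tau\in(\tau_0,\tau_1),\,r\in(0,2R)}\tau^{\nu}\frac{1+r^6}{R^{6-\alpha}}|\phi(r,\tau)|$$
and reduce (\ref{bubbletowerestimate100}) to the a priori bound $\|\phi\|_{\sharp,\tau_1}\lesssim\|f\|_{2+\alpha,\nu}$, independent of $\tau_1>\tau_0$. As in Lemma~\ref{l5555}, testing the equation against $Z(r)\eta(r)r^5$ and using the self-adjointness of $L$ together with orthogonality of $f$ propagates the orthogonality $\int_0^{2R}\phi(\cdot,\tau)Z\,r^5dr=0$ to the solution. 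Assume the a priori bound fails: then there exist $\tau_1^n\to\infty$, $f_n$, $\phi_n$ with $\|\phi_n\|_{\sharp,\tau_1^n}=1$, $\|f_n\|_{2+\alpha,\nu}\to 0$, and orthogonality; let $(r_n,\tau_n)$ attain (half of) the supremum.

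Next, I split into regimes. \emph{Inner regime:} if $r_n$ stays bounded while $\tau_n\to\infty$, set $\bar\phi_n(r,t)=\tau_n^{\nu}R^{\alpha-6}\phi_n(r,\tau_n+t)$; parabolic regularity yields a nonzero subsequential limit $\bar\phi$ solving $\partial_t\bar\phi=L[\bar\phi]$ on a compact-exhaustion of $(0,\infty)\times(-\infty,0]$ with $\int\bar\phi(\cdot,t)Zr^5dr=0$. Using the energy identity $\tfrac12\partial_t\int\bar\phi^2r^5dr=-B(\bar\phi,\bar\phi)$ (with the quadratic form non-negative on the orthogonal complement of $\langle Z\rangle$, since $Z$ is the only instability) together with $\partial_t\int\bar\phi_t^2\le 0$, exactly as in the proof of Lemma~\ref{l5:3}, forces $\bar\phi$ to be time-independent, hence in $\ker L=\langle Z\rangle$, hence zero by orthogonality --- a contradiction. \emph{Outer/boundary regime:} if $r_n\to\infty$ (so $r_n$ approaches the scale $R$), a parabolic rescaling $\tilde\phi_n(z,t)=r_n^6R^{\alpha-6}\tau_n^{\nu}\phi_n(r_nz,r_n^{-2}t+\tau_n)$ produces a nonzero bounded caloric function on a punctured (half-)space which must vanish by the parabolic Liouville/point-removal lemma invoked in the proof of Proposition~\ref{proposition5.1} (Theorem 5.1 of \cite{LiDongSire}), again a contradiction.

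Once the a priori bound is in hand, existence follows by solving the problem on bounded cylinders $(0,2R)\times[\tau_0,T)$ with $\phi(2R,\cdot)=0$ and $\phi(\cdot,\tau_0)=0$ --- subtracting at each step the projection onto $Z$ to preserve the orthogonality --- and passing to $T\to\infty$ using the uniform estimate; the gradient bound would come from standard interior parabolic regularity and a rescaling argument. \emph{Main obstacle:} the construction is in the energy-critical dimension six, so $Z\sim|y|^{-4}$ only marginally sits in the weighted $L^2(r^5dr)$-space, and the reason the bound (\ref{bubbletowerestimate100}) carries the $R^{6-\alpha}$ factor (rather than a scale-free power-law weight as in Proposition~\ref{proposition5.1}) is exactly this borderline behaviour. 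Making the energy argument in the inner regime rigorous therefore requires careful cutoffs at the scale $R$, controlling the resulting boundary terms and using the specific shape of the weight $\tfrac{R^{6-\alpha}}{1+r^6}$ to ensure the limit inherits enough integrability for the energy identity to close.
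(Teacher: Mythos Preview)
Your proposal is correct and follows exactly the strategy the paper invokes: the paper does not give an independent argument for this lemma but simply states that it is ``a minor modification of Proposition~7.1 in \cite{delPinoMussoJEMS} and Proposition~7.1 in \cite{DDW2020} (the radially symmetric case)'', and your blow-up/contradiction scheme --- propagation of orthogonality, inner-regime energy argument yielding a stationary kernel element killed by orthogonality, outer-regime parabolic Liouville --- is precisely that scheme, specialised to the radial six-dimensional setting. One terminological slip: $Z$ is a zero mode of $L$, not an ``instability''; what you need (and what holds, inherited from the non-negativity of the Yang--Mills second variation) is $B(\cdot,\cdot)\ge 0$ with kernel spanned by $Z$, which is exactly what you use.
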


For the outer problem, we consider the following problem
\begin{equation}\label{outerproblem1model}
-\partial_t\Psi + (\Psi)_{rr}+\frac{5}{r}\Psi_r + g(r, t) = 0,\quad  (r, t)\in (0, +\infty) \times [t_0,\infty).
\end{equation}
Assume that for $\alpha$, $\sigma>0$, the function $g(r, t)$ satisfies the following estimate
\begin{equation}\label{normfortheouterproblem}
\begin{aligned}
|g(r, t)|\leq M\frac{\mu_2^{-2}(t)\mu_2^{\sigma}(t)}{1+|y|^{2+\alpha}}, \quad  y =\frac{r}{\mu_2(t)}
\end{aligned}
\end{equation}
and we denote the least number $M>0$ such that (\ref{normfortheouterproblem}) holds as $\|g\|_{*,\sigma,2+\alpha}$.
Then we have the following lemma.
\begin{lemma}\label{bubbletowerouterlineartheory}
Suppose $\|g\|_{*,\sigma,2+\alpha}<+\infty$ for some $\alpha > 0$ and $\sigma > 0$. Let $\Psi(r, t) $ be the solution of (\ref{outerproblem1model}) given be the Duhamel formula
$$
\Psi (r, t) = \int_{t_0}^{t}\int_{\mathbb R^6}\frac{1}{4\pi (t-s)^3}e^{-\frac{|x-u|^2}{t-s}}g(|u|, s)duds.
$$
Then, for all $(r, t)\in (0, +\infty) \times [t_0,\infty)$, there holds
\begin{equation}\label{potoo0}
|\Psi(r, t)|\lesssim \|g\|_{*,\sigma,2+\alpha}\frac{\mu_2^{\sigma}(t)}{1+|y|^{\alpha}},
\end{equation}
for $y =\frac{r}{\mu_2(t)}$.
\end{lemma}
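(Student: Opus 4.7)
The plan is to closely mirror the proof of Proposition \ref{l4:lemma4.1}, adapted to dimension six (the operator $\partial_{rr}+\tfrac{5}{r}\partial_r$ is the radial Laplacian on $\mathbb{R}^6$, which is why the Duhamel kernel carries a prefactor $1/(s-t)^3$). Starting from the pointwise bound $|g(|u|,s)|\leq \|g\|_{*,\sigma,2+\alpha}\,\mu_2^{\sigma-2}(s)(1+|u|/\mu_2(s))^{-(2+\alpha)}$ together with the monotonicity $\mu_2(s)\leq \mu_2(t)$ for $s\geq t$, I would split $\Psi=I_1+I_2$ with $I_1=\int_t^{t+1}\cdots$ and $I_2=\int_{t+1}^{\infty}\cdots$.

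For $I_1$, since $\mu_2$ is essentially frozen on $[t,t+1]$ (replace $\mu_2(s)$ by $\mu_2(t+1)$ in the denominator and by $\mu_2(t)$ in the numerator), carrying out the $s$-integration via the substitution $p=|x-u|/\sqrt{s-t}$ reduces the estimate to a six-dimensional Newton-potential bound
$$I_1\lesssim\|g\|_{*,\sigma,2+\alpha}\,\mu_2^{\sigma+\alpha}(t)\int_{\mathbb{R}^6}\frac{du}{|x-u|^{4}\bigl(\mu_2^{2+\alpha}(t+1)+|u|^{2+\alpha}\bigr)}.$$
Rescaling $\bar x=x/\mu_2(t+1)$ and $u\mapsto \mu_2(t+1)u$ recasts the right-hand side as $\mu_2^{-\alpha}(t+1)\int_{\mathbb{R}^6}|\bar x-u|^{-4}(1+|u|^{2+\alpha})^{-1}du$. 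This last integral is then controlled by the three-region decomposition $B(\bar x,|\bar x|/2)$, $B(0,|\bar x|/2)$, and $\mathbb{R}^6\setminus(B(\bar x,|\bar x|/2)\cup B(0,|\bar x|/2))$ exactly as in Proposition \ref{l4:lemma4.1}, yielding the pointwise bound $\lesssim (1+|\bar x|^\alpha)^{-1}$, and hence $I_1\lesssim \|g\|_{*,\sigma,2+\alpha}\,\mu_2^\sigma(t)(1+|y|^\alpha)^{-1}$.

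For $I_2$, I would use the change of variables $u=x+\sqrt{s-t}\,v$ to absorb the Gaussian's spatial dependence into $e^{-|v|^2}$, and then split the $v$-integral into the regions $\{|v|>2|x|/\sqrt{s-t}\}$ and its complement. On the first region the Gaussian dominates all polynomial factors; on the second region the denominator is bounded below by $(s-t)^{1+\alpha/2}|v|^{2+\alpha}$ (or by $\mu_2^{2+\alpha}(s)$, whichever is larger), reducing to an integrable tail $\int_{t+1}^\infty \mu_2^{\sigma+\alpha}(s)(s-t)^{-3}\,ds\lesssim \mu_2^{\sigma+\alpha}(t)$, which again produces the stated decay $\mu_2^\sigma(t)(1+|y|^\alpha)^{-1}$.

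The main obstacle I expect is the delicate bookkeeping for $I_1$: in six dimensions the Newton kernel is $|x|^{-4}$ rather than $|x|^{-2}$, so each of the three region integrals sits right at the integrability threshold and relies on the precise weight exponent $2+\alpha$ with $\alpha>0$ to absorb the singularity near $u=\bar x$ and the tail near infinity. Once the region splitting is performed with the correct thresholds, all factors of $\mu_2(t+1)$ and $\mu_2(t)$ align to give exactly the claimed bound \eqref{potoo0}.
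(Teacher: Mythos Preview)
Your proposal is correct and follows exactly the approach the paper indicates: the paper does not give a separate proof of Lemma~\ref{bubbletowerouterlineartheory} but simply states that it ``can be proved as Proposition~\ref{l4:lemma4.1}.'' Your adaptation of that argument to the six-dimensional kernel, including the $I_1/I_2$ split and the three-region decomposition for the Newton potential, is precisely the intended method.
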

The proof of Lemma \ref{bubbletowerinnerlineartheory} is a minor modification of Proposition 7.1 in \cite{delPinoMussoJEMS} and Proposition 7.1 in \cite{DDW2020} (the radially symmetric case).  Lemma \ref{bubbletowerouterlineartheory} can be proved as Proposition \ref{l4:lemma4.1}.

\subsection{Estimates for outer problem.}
We have the following estimates for the outer problem.
Suppose
\begin{equation*}
\langle y_2\rangle|\nabla_{y_2}\phi_2(y_2, t)|+|\phi_2(y_2, t)|\lesssim e^{-\varepsilon t_0}\mu_2^{\sigma}(t)\left(\frac{\mu_2(t)}{\mu_1(t)}\right)^{2}\langle y_2\rangle^{-a},
\end{equation*}
then we have \\
{\bf Step 1. Estimates for $B[\vec{\phi}]$}:

$\bullet$ We have \begin{equation*}
\begin{aligned}
\left|\dot\mu_2\frac{\partial }{\partial\mu_2}\varphi_2\eta_2 \right| &\lesssim \left|\dot\mu_2\mu_2^{-3}\left(2\phi_2(y_2, t)+y_2\cdot \nabla_{y_2}\phi_2(y_2, t)\right)\eta_2\right|\\
&\lesssim e^{-\varepsilon t_0}\left(\frac{1}{\mu_1(t)\mu_2(t)}\right)^{2}\mu_2^{\sigma}(t)\left(\frac{\mu_2(t)}{\mu_1(t)}\right)^{2}\langle y_2\rangle^{-a}\\
&\lesssim e^{-\varepsilon t_0} R^2\mu_2(t)^{2}\mu_1(t)^{-2}\left(\frac{1}{\mu_1(t)}\right)^{2}\left(\frac{1}{\mu_2(t)}\right)^{2}\mu_2^{\sigma}(t)\langle y_2\rangle^{-2-a}.
\end{aligned}
\end{equation*}

$\bullet$ We have
\begin{equation*}
\begin{aligned}
\left|\varphi_2\right| &\lesssim e^{-\varepsilon t_0}\mu_2^{\sigma}(t)\left(\frac{1}{\mu_1(t)}\right)^{2}R^{-a}\quad \text{ for } 2R\leq |y_2| \leq 4R.
\end{aligned}
\end{equation*}

\begin{equation*}
\begin{aligned}
\left|\partial_{rr}\eta_2\varphi_2\right| &\lesssim e^{-\varepsilon t_0}(R\mu_2)^{-2}\mu_2^{\sigma}(t)\left(\frac{1}{\mu_1(t)}\right)^{2}R^{-a}\\
&\lesssim e^{-\varepsilon t_0}\left(\frac{1}{\mu_1(t)}\right)^{2}\left(\frac{1}{\mu_2(t)}\right)^{2}\mu_2^{\sigma}(t)R^{-2-a}\text{ for } 2R\leq |y_2| \leq 4R.
\end{aligned}
\end{equation*}

\begin{equation*}
\begin{aligned}
\left|(\eta_2)_r(\varphi_2)_r\right|  &\lesssim e^{-\varepsilon t_0}(R\mu_2)^{-1}\mu_2^{\sigma}(t)\left(\frac{1}{\mu_1(t)}\right)^{2}\left(\frac{1}{\mu_2(t)}\right)R^{-a-1}\\
&\lesssim e^{-\varepsilon t_0}\left(\frac{1}{\mu_1(t)}\right)^{2}\left(\frac{1}{\mu_2(t)}\right)^{2}\mu_2^{\sigma}(t)R^{-2-a}\text{ for } 2R\leq |y_2| \leq 4R.
\end{aligned}
\end{equation*}

\begin{equation*}
\begin{aligned}
\left|\partial_t \eta_2\varphi_2\right| &\lesssim e^{-\varepsilon t_0}R^{-1}\mu_2^{-2}|\dot \mu_2|\mu_2^{\sigma}(t)\left(\frac{1}{\mu_1(t)}\right)^{2}R^{-a}\\
& \lesssim e^{-\varepsilon t_0}R|\dot \mu_2|\mu_2^{-2}\mu_2^{\sigma}(t)\left(\frac{1}{\mu_1(t)}\right)^{2}R^{-2-a}\\
& \lesssim e^{-\varepsilon t_0}R\mu_2(t)\left(\frac{1}{\mu_1(t)^2}\right)^{2}\mu_2^{\sigma}(t)\left(\frac{1}{\mu_2(t)}\right)^{2}R^{-2-a}\text{ for } 2R\leq |y_2| \leq 4R.
\end{aligned}
\end{equation*}

$\bullet$ Since
\begin{equation*}
\begin{aligned}
\left|\left((12u^*-6r^2(u^*)^2)-(12U_2-6r^2U_2^2)\right)\right| &\lesssim \mu_2^{-2}\langle y_2\rangle^{-4},
\end{aligned}
\end{equation*}
We have
\begin{equation*}
\begin{aligned}
&\left|\left((12u^*-6r^2(u^*)^2)-(12U_2-6r^2U_2^2)\right)\varphi_2\eta_2\right|\\
&\quad \lesssim e^{-\varepsilon t_0}\mu_2^{-2}\langle y_2\rangle^{-4}\mu_2^{\sigma}(t)\left(\frac{1}{\mu_1(t)}\right)^{2}R^{-a}\\
&\quad \lesssim e^{-\varepsilon t_0}\left(\frac{1}{\mu_1(t)}\right)^{2}\mu_2^{\sigma}(t)\left(\frac{1}{\mu_2(t)}\right)^{2}R^{-2-a}.
\end{aligned}
\end{equation*}

From the linear theory for the outer problem, we have
\begin{equation*}
|\Psi|\lesssim e^{-\varepsilon t_0}\frac{\mu_2^{\sigma}(t)}{\mu_1(t)^2}\langle y_2\rangle^{-a}.
\end{equation*}
And from the linear theory for the inner problem, we have
\begin{equation*}
|\phi_1|\lesssim e^{-\varepsilon t_0}\mu_2^{\sigma}(t)\langle y_2\rangle^{-a}.
\end{equation*}
Then we have the following estimates:

$\bullet$\begin{equation*}
\begin{aligned}
\left|\partial_{rr}\eta_1\varphi_1\right| &\lesssim e^{-\varepsilon t_0}(R\mu_1)^{-2}\frac{\mu_2^{\sigma}(t)}{\mu_1(t)^2}\langle y_2\rangle^{-a}\\
&\lesssim e^{-\varepsilon t_0}\left(\frac{1}{\mu_2(t)}\right)^{2}\frac{\mu_2^{\sigma}(t)}{\mu_1(t)^2}\langle y_2\rangle^{-2-a}\text{ for } 2R\leq |y_1| \leq 4R.
\end{aligned}
\end{equation*}

$\bullet$\begin{equation*}
\begin{aligned}
\left|(\eta_1)_r(\varphi_1)_r\right|&\lesssim e^{-\varepsilon t_0}\mu_2^{-1}(R\mu_1)^{-1}\frac{\mu_2^{\sigma}(t)}{\mu_1(t)^2}\langle y_2\rangle^{-a-1}\\
&\lesssim e^{-\varepsilon t_0}\left(\frac{1}{\mu_2(t)}\right)^{2}\frac{\mu_2^{\sigma}(t)}{\mu_1(t)^2}\langle y_2\rangle^{-2-a}\text{ for } 2R\leq |y_1| \leq 4R.
\end{aligned}
\end{equation*}

$\bullet$\begin{equation*}
\begin{aligned}
\left|\partial_t \eta_1\varphi_1\right| &\lesssim e^{-\varepsilon t_0}R^{-1}\mu_1^{-2}|\dot \mu_1|\frac{\mu_2^{\sigma}(t)}{\mu_1(t)^2}\langle y_2\rangle^{-a}\\
& \lesssim e^{-\varepsilon t_0}R^{-1}\mu_1^{-1}\frac{\mu_2^{\sigma}(t)}{\mu_1(t)^2}\langle y_2\rangle^{-a}\\
&\lesssim e^{-\varepsilon t_0}R\mu_1(t)\left(\frac{1}{\mu_2(t)}\right)^{2}\frac{\mu_2^{\sigma}(t)}{\mu_1(t)^2}\langle y_2\rangle^{-2-a}\text{ for }\quad 2R\leq |y_1| \leq 4R
\end{aligned}
\end{equation*}

$\bullet$\begin{equation*}
\begin{aligned}
\left|\dot\mu_1\frac{\partial }{\partial\mu_1}\varphi_1\eta_1 \right| &\lesssim \left|\dot\mu_1\mu_1^{-3}\left(2\phi_1+y_1\cdot \nabla_{y_1}\phi_1\right)\eta_1\right|\\
&\lesssim e^{-\varepsilon t_0}\left|\dot\mu_1\right|\mu_1^{-1} \frac{\mu_2^{\sigma}(t)}{\mu_1(t)^2}\langle y_2\rangle^{-a}\\
&\lesssim e^{-\varepsilon t_0}\left(R\mu_1(t)\right)^2\left(\frac{1}{\mu_2(t)}\right)^{2}\frac{\mu_2^{\sigma}(t)}{\mu_1(t)^2}\langle y_2\rangle^{-2-a}.
\end{aligned}
\end{equation*}

$\bullet$ Since
\begin{equation*}
\begin{aligned}
&\left|\left((12u^*-6r^2(u^*)^2)-(12U_*-6r^2U_*^2)\right)\right|\lesssim \mu_1^{-2}\langle y_1\rangle^{-4}\left(\lambda_2^{-2}\langle y_2 \rangle^{-4} + \lambda_2\right),
\end{aligned}
\end{equation*}
We have
\begin{equation*}
\begin{aligned}
&\left|\left((12u^*-6r^2(u^*)^2)-(12U_*-6r^2U_*^2)\right)\varphi_1\eta_1\right|\\
&\quad \lesssim e^{-\varepsilon t_0}\mu_1^{-4}\langle y_1\rangle^{-4}\left(\lambda_2^{-2}\langle y_2 \rangle^{-4} + \lambda_2\right)\mu_2^{\sigma}(t)\langle y_2\rangle^{-a}\\
&\quad \lesssim e^{-\varepsilon t_0}\left(\frac{1}{\mu_2(t)}\right)^{2}\frac{\mu_2^{\sigma}(t)}{\mu_1(t)^2}\langle y_2\rangle^{-2-a}\\
&\quad \quad + e^{-\varepsilon t_0}\mu_1(t)^{-1}\mu_2(t)R^2\left(\frac{1}{\mu_2(t)}\right)^{2}\frac{\mu_2^{\sigma}(t)}{\mu_1(t)^2}\langle y_2\rangle^{-2-a}\\
&\quad \lesssim e^{-\varepsilon t_0}\left(1+\mu_1(t)^{-1}\mu_2(t)R^2\right)\left(\frac{1}{\mu_2(t)}\right)^{2}\frac{\mu_2^{\sigma}(t)}{\mu_1(t)^2}\langle y_2\rangle^{-2-a}.
\end{aligned}
\end{equation*}
We continue the estimates of the outer problem as follows.

{\bf Step 2. Estimates for $\eta_1\mu_1^{-4}c(t)Z(y_1)$}: We have
\begin{equation*}
\begin{aligned}
&\left|\eta_1\mu_1^{-4}c(t)Z(y_1)\right|\\
&\lesssim e^{-\varepsilon t_0}(\mu_1(t))^{-4}\mu_2^{\sigma}(t)\left(\frac{\mu_2}{\mu_1}\right)^a\frac{|x|^{2+a}}{\mu_2^{2+a}}\langle y_2\rangle^{-2-a}Z(y_1)\\
&\lesssim e^{-\varepsilon t_0} R^{2+a}Z(y_1)\frac{\mu_2^{\sigma}(t)}{\mu_1(t)^2}\left(\frac{1}{\mu_2(t)}\right)^{2}\langle y_2\rangle^{-2-a}\\
&\lesssim e^{-\varepsilon t_0}\frac{\mu_2^{\sigma}(t)}{\mu_1(t)^2}\left(\frac{1}{\mu_2(t)}\right)^{2}\langle y_2\rangle^{-2-a}.
\end{aligned}
\end{equation*}

{\bf Step 3. Estimates for $E^{out}$}: We decompose the error term $E^{out}$ as follows.

$\bullet$\begin{equation*}
\begin{aligned}
&\left|(12U_2-6r^2U_2^2)\left(U_*-U_*(0)\right)\chi\right|\\
&\quad \lesssim \mu_2^{-2}\langle y_2\rangle^{-4}\mu_1^{-2}\lambda_2\chi\lesssim \lambda_2\mu_2^{-\sigma}(t)\left(\frac{1}{\mu_2(t)}\right)^{2}\frac{\mu_2^{\sigma}(t)}{\mu_1(t)^2}\langle y_2\rangle^{-2-a}.
\end{aligned}
\end{equation*}

$\bullet$\begin{equation*}
\begin{aligned}
&\left|\left((6-2r^2\bar U)\bar U^2-(6-2r^2U_*)U_*^2-(6-2r^2U_2)U^2_2-(12U_2-6r^2U_2^2)U_*\right)\chi\right|\\
&\quad \lesssim \left|(6-2r^2U_*)U_*^2\chi\right|\lesssim \mu_1^{-4}\chi\\
&\quad \lesssim \mu_2^{-\sigma}(t)\left(\mu_2(t)\right)^{\frac{2-a}{2}}\left(\mu_1(t)\right)^{\frac{a-2}{2}}\left(\frac{1}{\mu_2(t)}\right)^{2}\frac{\mu_2^{\sigma}(t)}{\mu_1(t)^2}\langle y_2\rangle^{-2-a}.
\end{aligned}
\end{equation*}

$\bullet$ Since \begin{equation*}
\begin{aligned}
&\left|\partial_tU_2\right| \lesssim \left|\dot \mu_2\mu_2^{-3}Z(y_2)\right|\lesssim \left(\frac{1}{\mu_1(t)\mu_2(t)}\right)^{2}\langle y_2\rangle^{-4},
\end{aligned}
\end{equation*}
we have in the region $\{|x|\geq 2\sqrt{\mu_1\mu_2}\}$,
\begin{equation*}
\begin{aligned}
&\left|(1-\chi)\partial_tU_2\right|\lesssim \mu_2^{-\sigma}(t)\left(\frac{\mu_2(t)}{\mu_1(t)}\right)^{\frac{2-a}{2}}\left(\frac{1}{\mu_1(t)\mu_2(t)}\right)^{2}\mu_2^{\sigma}(t)\langle y_2\rangle^{-2-a}.
\end{aligned}
\end{equation*}

$\bullet$
\begin{equation*}
\begin{aligned}
&\left|\left((6-2r^2\bar U)\bar U^2-(6-2r^2U_*)U_*^2-(6-2r^2U_2)U^2_2\right)(1-\chi)\right|\\
&\quad \lesssim \left|U_2U_*(1-\chi)\right|\lesssim \mu_2^{-\sigma}(t)\frac{\mu_2(t)^{1-\frac{a}{2}}}{\mu_1(t)^{1-\frac{a}{2}}}\left(\frac{1}{\mu_1(t)\mu_2(t)}\right)^{2}\mu_2^{\sigma}(t)\langle y_2\rangle^{-2-a}.
\end{aligned}
\end{equation*}

$\bullet$
\begin{equation*}
\begin{aligned}
&\left|N_{\bar U}[\varphi_0]\right|\lesssim |\varphi_0|^2 \lesssim \mu_2^{-4}\left(\frac{\mu_2}{\mu_1}\right)^4\langle y_2\rangle^{-4}\chi^2\\
&\quad \lesssim \mu_2^{-\sigma}(t)\mu_2(t)^2\mu_1(t)^{-2}\left(\frac{1}{\mu_1(t)\mu_2(t)}\right)^{2}\mu_2^{\sigma}(t)\langle y_2\rangle^{-2-a}.
\end{aligned}
\end{equation*}

$\bullet$
\begin{equation*}
\begin{aligned}
&\left|\partial_r\varphi_0\partial_r\chi_r\right|\lesssim \sqrt{\mu_1\mu_2}^{-1}\left(\frac{\mu_2}{\mu_1}\right)^2\mu_2^{-3}\langle y_2\rangle^{-3}\\
&\quad \lesssim \mu_2^{-\sigma}(t)\left(\frac{\mu_2(t)}{\mu_1(t)}\right)^{\frac{1}{2}}\left(\frac{1}{\mu_1(t)\mu_2(t)}\right)^{2}\mu_2^{\sigma}(t)\langle y_2\rangle^{-2-a}.
\end{aligned}
\end{equation*}

$\bullet$
\begin{equation*}
\begin{aligned}
&\left|\varphi_0\cdot\partial_{rr}\chi\right|\lesssim \sqrt{\mu_1\mu_2}^{-2}\left(\frac{\mu_2}{\mu_1}\right)^2\mu_2^{-2}\langle y_2\rangle^{-2}\\
&\quad \lesssim \mu_2^{-\sigma}(t)\left(\frac{\mu_2(t)}{\mu_1(t)}\right)^{\frac{2-a}{2}}\left(\frac{1}{\mu_1(t)\mu_2(t)}\right)^{2}\mu_2^{\sigma}(t)\langle y_2\rangle^{-2-a}.
\end{aligned}
\end{equation*}

$\bullet$
\begin{equation*}
\begin{aligned}
&\left|\partial_t\varphi_0\chi\right|+\left|\varphi_0\partial_t\chi\right|\lesssim \left(\frac{1}{\mu_1(t)}\right)^{2}\frac{1}{\sqrt{\mu_1(t)\mu_2(t)}}\left(\frac{\mu_2}{\mu_1}\right)^2\mu_2^{-2}\langle y_2\rangle^{-2}\\
&\quad \lesssim \mu_2^{-\sigma}(t)\left(\frac{\mu_2(t)}{\mu_1(t)}\right)^{\frac{4-a}{2}}\frac{1}{\sqrt{\mu_1(t)\mu_2(t)}}\left(\frac{1}{\mu_1(t)\mu_2(t)}\right)^{2}\mu_2^{\sigma}(t)\langle y_2\rangle^{-2-a}.
\end{aligned}
\end{equation*}

{\bf Step 4. Estimates for the term $V$}:

$\bullet$ Let us recall that
\begin{equation*}
V = (12u^*-6r^2(u^*)^2) - \zeta_2(12U_2-6r^2U^2_2)- \zeta_1(12U_*-6r^2U_*^2),
\end{equation*}

In the region $\{r\geq R\mu_{01}\}$, we have
\begin{equation*}
\begin{aligned}
\left|V\Psi\right|\lesssim e^{-\varepsilon t_0}\frac{1}{r^4}\left|\Psi\right| &\lesssim \frac{\mu_1^2}{r^4}\frac{\mu_2^{\sigma}(t)}{\mu_1(t)^2}\langle y_2\rangle^{-a}\\
& \lesssim e^{-\varepsilon t_0}\frac{\mu_1(t)^2}{r^2}\left(\frac{1}{\mu_1(t)\mu_2(t)}\right)^{2}\mu_2^{\sigma}(t)\langle y_2\rangle^{-2-a}\\
& \lesssim e^{-\varepsilon t_0}\frac{1}{R^2}\left(\frac{1}{\mu_1(t)\mu_2(t)}\right)^{2}\mu_2^{\sigma}(t)\langle y_2\rangle^{-2-a}.
\end{aligned}
\end{equation*}

In the region $\{\sqrt{\mu_{01}\mu_{02}}\leq r\leq 2R^{-1}\mu_{01}\}$, we have
\begin{equation*}
\begin{aligned}
\left|V\Psi\right|\lesssim e^{-\varepsilon t_0}\frac{1}{\mu_{1}^2}\left|\Psi\right| &\lesssim  \frac{r^2}{\mu_{1}^2}\left(\frac{1}{\mu_1(t)\mu_2(t)}\right)^{2}\mu_2^{\sigma}(t)\langle y_2\rangle^{-2-a}\\
& \lesssim e^{-\varepsilon t_0}\frac{1}{R^2}\left(\frac{1}{\mu_1(t)\mu_2(t)}\right)^{2}\mu_2^{\sigma}(t)\langle y_2\rangle^{-2-a}.
\end{aligned}
\end{equation*}

In the region $\{R\mu_{02}\leq r\leq \sqrt{\mu_{01}\mu_{02}}\}$, we have
\begin{equation*}
\begin{aligned}
\left|V\Psi\right|\lesssim e^{-\varepsilon t_0}\frac{\mu_{2}^2}{r^4}\left|\Psi\right| &\lesssim  \frac{\mu_{2}^2}{r^2}\left(\frac{1}{\mu_1(t)\mu_2(t)}\right)^{2}\mu_2^{\sigma}(t)\langle y_2\rangle^{-2-a}\\
& \lesssim e^{-\varepsilon t_0}\frac{1}{R^2}\left(\frac{1}{\mu_1(t)\mu_2(t)}\right)^{2}\mu_2^{\sigma}(t)\langle y_2\rangle^{-2-a}
\end{aligned}
\end{equation*}

We also have
\begin{equation*}
\begin{aligned}
&\left|\zeta_1(u_*^{p-1}-(12U_*-6r^2U_*^2))\Psi\right|\\
& \lesssim e^{-\varepsilon t_0}\frac{\mu_{2}^2}{r^4}\left|\Psi\right| \lesssim  e^{-\varepsilon t_0}\frac{\mu_{2}^2}{r^2}\left(\frac{1}{\mu_1(t)\mu_2(t)}\right)^{2}\mu_2^{\sigma}(t)\langle y_2\rangle^{-2-a}\\
& \lesssim e^{-\varepsilon t_0}\frac{1}{R^2}\left(\frac{1}{\mu_1(t)\mu_2(t)}\right)^{2}\mu_2^{\sigma}(t)\langle y_2\rangle^{-2-a}
\end{aligned}
\end{equation*}
and
\begin{equation*}
\begin{aligned}
&\left|\zeta_2(u_*^{p-1}-(12U_2-6r^2U^2_2))\Psi\right|\\
& \lesssim \frac{1}{\mu_{1}^2}\left|\Psi\right| \lesssim  e^{-\varepsilon t_0}\frac{\mu_{2}^2R^2}{\mu_{1}^2}\left(\frac{1}{\mu_1(t)\mu_2(t)}\right)^{2}\mu_2^{\sigma}(t)\langle y_2\rangle^{-2-a}\\
& \lesssim e^{-\varepsilon t_0}\frac{1}{R^2}\left(\frac{1}{\mu_1(t)\mu_2(t)}\right)^{2}\mu_2^{\sigma}(t)\langle y_2\rangle^{-2-a}.
\end{aligned}
\end{equation*}

{\bf Step 5. Estimates for the term $N(\phi, \Psi, \mu)$}:

$\bullet$ Observe that
\begin{equation*}
\begin{aligned}
&\left|N_{u^*}\left(\varphi_1\eta_1 + \varphi_2\eta_2 + \Psi\right)\right|\lesssim \left|\varphi_1\right|^2\eta_1 + \left|\varphi_2\right|^2\eta_2 + \left|\Psi\right|^2,
\end{aligned}
\end{equation*}
we have
\begin{equation*}
\begin{aligned}
\left|\varphi_1\right|^2\eta_1 & \lesssim \mu_{1}^{-4}\mu_2^{2\sigma}(t)\langle y_2\rangle^{-2a}(e^{-\varepsilon t_0})^2\\
& \lesssim \mu_{1}^{-a}\mu_{2}^{a}R^{2-a}\left(\frac{1}{\mu_1(t)\mu_2(t)}\right)^{2}\mu_2^{2\sigma}(t)\langle y_2\rangle^{-2-a}(e^{-\varepsilon t_0})^2\\
& \lesssim e^{-\varepsilon t_0}\frac{1}{R^2}\left(\frac{1}{\mu_1(t)\mu_2(t)}\right)^{2}\mu_2^{\sigma}(t)\langle y_2\rangle^{-2-a},
\end{aligned}
\end{equation*}
\begin{equation*}
\begin{aligned}
\left|\varphi_2\right|^2\eta_2 & \lesssim \mu_{2}^{-4}\left(\mu_2^{\sigma}(t)\left(\frac{\mu_2(t)}{\mu_1(t)}\right)^{2}\langle y_2\rangle^{-a}e^{-\varepsilon t_0}\right)^2\eta_2\\
& \lesssim \mu_{1}^{-2}\mu_{2}^{2}R^{2-a}\left(\frac{1}{\mu_1(t)\mu_2(t)}\right)^{2}\mu_2^{2\sigma}(t)\langle y_2\rangle^{-2-a}(e^{-\varepsilon t_0})^2\\
& \lesssim e^{-\varepsilon t_0}\frac{1}{R^2}\left(\frac{1}{\mu_1(t)\mu_2(t)}\right)^{2}\mu_2^{\sigma}(t)\langle y_2\rangle^{-2-a}
\end{aligned}
\end{equation*}
and
\begin{equation*}
\begin{aligned}
\left|\Psi\right|^2&\lesssim \left(\frac{\mu_2^{\sigma}(t)}{\mu_1(t)^2}\langle y_2\rangle^{-a}e^{-\varepsilon t_0}\right)^2\\
& \lesssim \mu_{1}^{-2}\mu_{2}^{a}\min\{r^{2-a}, 1\}\mu_2^{\sigma}(t)\left(\frac{1}{\mu_1(t)\mu_2(t)}\right)^{2}\mu_2^{\sigma}(t)\langle y_2\rangle^{-2-a}(e^{-\varepsilon t_0})^2\\
& \lesssim e^{-\varepsilon t_0}\frac{1}{R^2}\left(\frac{1}{\mu_1(t)\mu_2(t)}\right)^{2}\mu_2^{\sigma}(t)\langle y_2\rangle^{-2-a}.
\end{aligned}
\end{equation*}
Combining all the estimates above, we obtain the following result.
\begin{lemma}
For the term
$$
F[\phi_1, \phi_2, \Psi, \mu_{12}](x,t):=V\Psi + B[\vec{\phi}] + N(\phi, \Psi, \mu) + E^{out} -\eta_1\mu_1^{-4}c(t)Z(y_1),
$$
we have 
$$
\left|F[\phi_1, \phi_2, \Psi, \mu_{12}]\right|\lesssim e^{-\varepsilon t_0}\frac{\mu_2^{\sigma}(t)}{\mu_1(t)^2\mu_2(t)^2}\langle y_2\rangle^{-a-2}.
$$
\end{lemma}

\subsection{Proof of Theorem \ref{t:main-bubble-tower}.}
We apply Lemma \ref{bubbletowerinnerlineartheory} to the inner problem (\ref{innerproblem2}). To this aim, we need to choose the parameter $\mu_{12}$ such that the orthogonality condition
\begin{equation}\label{e:orthogonalitytower}
\int_{0}^{2R}\left(\zeta_2(12W-6y_2^2W^2)\mu_2^2\Psi + D_2\right)\cdot Z(y_2)y^5_2d y_2 = 0
\end{equation}
holds. Recall that
\begin{equation*}
\begin{aligned}
D_2: = (\dot \mu_{02}\mu_{12} + \mu_{02}\dot \mu_{12})Z(y)+2(12W(y)-6y^2W^2(y))\left(\lambda_{02}\right)^2\frac{\mu_{12}}{\mu_{02}}U(0),
\end{aligned}
\end{equation*}
$$
\mu_{02}(t) = e^{-\frac{c_*e^{2c_1 t}}{2c_1}},\quad \frac{\dot\mu_{02}}{\mu_{02}} = -c_*e^{2c_1 t} = -c_*\frac{\lambda_{02}^2}{\mu_{02}^2},
$$
$$
\mu_{02}\dot\mu_{02} + c_* \left(\frac{\mu_{02}}{\mu_{01}}\right)^2 = 0,
$$
and the fact that
$$
\frac{\int_{0}^{2R}(12W(y)-6y^2W^2(y))U(0)Z(y)y^5dy}{\int_{0}^{2R}Z(y)Z(y)y^5dy} = c_* + O(R^{-2}),
$$
we know, the orthogonality condition (\ref{e:orthogonalitytower}) can be reduce to the following ODE of $\mu_{12}$,
\begin{equation}\label{aoeiue}
\begin{aligned}
\dot \mu_{12} + c_*e^{2c_1 t} \mu_{12} & = -\frac{\mu_2^2}{\mu_{02}}\frac{\int_{0}^{2R}\left(\zeta_2(12W-6y_2^2W^2)\Psi\right)\cdot Z(y_2)y^5_2dy_2}{\int_{0}^{2R}Z(y_2)^2y^5_2dy_2} + \dot\mu_{02}\frac{\mu_{12}}{\mu_{02}}O(R^{-2}).
\end{aligned}
\end{equation}

Let $h$ be a function of $t$ satisfying condition $\|h\|_{\sigma}^\sharp:=\sup_{t\geq t_0}|\mu_{02}^{-1-\sigma}(t)\mu_{01}^2(t) h(t)| \lesssim e^{-\varepsilon t_0}$. Observe that the solution of
\begin{equation}\label{e5:14}
\dot \mu_{12} + c_*e^{2c_1 t} \mu_{12} = h(t)
\end{equation}
can be expressed by the formula,
\begin{equation}\label{e5:15}
\mu_{12}(t) = e^{-\frac{c_*e^{2c_1 t}}{2c_1}}\left[d + \int_{t_0}^te^{\frac{c_*e^{2c_1 \tau}}{2c_1}} h(\tau)d\tau\right],
\end{equation}
with the constant $d$ be chosen as $d = -\int_{t_0}^{+\infty}e^{\frac{c_*e^{2c_1 \tau}}{2c_1}} h(\tau)d\tau$. Therefore, we have
\begin{equation}\label{mu12}
\begin{aligned}
&\|\mu_{12}\|_{\sigma}^\sharp  = \|e^{(1+\sigma)\frac{c_*e^{2c_1 t}}{2c_1}}\mu_{01}^2(t)\mu_{12}(t)\|_{L^\infty(t_0,\infty)}\\
&\lesssim \sup_{t\geq t_0}\left|e^{\sigma\frac{c_*e^{2c_1 t}}{2c_1}}\mu_{01}^2(t)\int_{t}^{+\infty}e^{\frac{c_*e^{2c_1 \tau}}{2c_1}}| h(\tau)|d\tau\right| \\
&\lesssim e^{-\varepsilon t_0}\sup_{t\geq t_0}\left|e^{\sigma\frac{c_*e^{2c_1 t}}{2c_1}}\int_{t}^{+\infty}e^{-\sigma\frac{c_*e^{2c_1 \tau}}{2c_1}}e^{2c_1\tau}d\tau\right|\lesssim e^{-\varepsilon t_0}.
\end{aligned}
\end{equation}
This gives us a bounded linear operator $\mathcal{T}_1: h\to \mu_{12}$ by assigning the solution $\mu_{12}$ of (\ref{e5:14}) given by (\ref{e5:15}) to any given $h$ satisfying $\|h\|_{\sigma}^\sharp < +\infty$. Thus $\mu_{12}$ is a solution of (\ref{aoeiue}) if it is fixed point of the problem
\begin{equation*}
\mu_{12} = \mathcal T_1\left(G[\phi_1, \phi_2, \Psi, \mu_{12}](t)\right)
\end{equation*}
with
$$
G[\phi_1, \phi_2, \Psi, \mu_{12}](t):=-\frac{\mu_2^2}{\mu_{02}}\frac{\int_{0}^{2R}\left(\zeta_2(12W-6y_2^2W^2)\Psi\right)\cdot Z(y_2)y^5_2dy_2}{\int_{0}^{2R}Z(y_2)^2y^5_2dy_2} + \dot\mu_{02}\frac{\mu_{12}}{\mu_{02}}O(R^{-2}).
$$

Once the orthogonality condition is satisfied, from Lemma \ref{bubbletowerinnerlineartheory} we know that there is a bounded linear operator $\mathcal{T}_2$ mapping from a function $h(y,\tau)$ satisfying $\|h\|_{2 + \alpha, \sigma} < +\infty$ to a solution $\phi$ of (\ref{innerproblem1model}) satisfying the estimate (\ref{bubbletowerestimate100}). Therefore the solution of (\ref{innerproblem1}) is a fixed point of the problem
\begin{equation*}
\phi_1 = \mathcal{T}_2\left(H_1[\phi_1, \phi_2, \Psi, \mu_{12}](y,t(\tau))\right),
\end{equation*}
while the solution of (\ref{innerproblem2}) is a fixed point of the problem
\begin{equation*}
\phi_2 = \mathcal{T}_2\left(H_2[\phi_1, \phi_2, \Psi, \mu_{12}](y,t(\tau))\right).
\end{equation*}
Here, $H_1[\phi_1, \phi_2, \Psi, \mu_{12}]$ and $H_2[\phi_1, \phi_2, \Psi, \mu_{12}]$ are defined as
\begin{equation*}
H_1[\phi_1, \phi_2, \Psi, \mu_{12}](y,t(\tau)):=\zeta_1(12W-6y_1^2W^2)\mu_1^2\Psi + c(t)Z(y_1) = 0,
\end{equation*}
\begin{equation*}
H_2[\phi_1, \phi_2, \Psi, \mu_{12}](y,t(\tau)):=\zeta_2(12W-6y_2^2W^2)\mu_2^2\Psi + D_2 = 0,
\end{equation*}
Similarly, the solution $\Psi$ of (\ref{outerproblem}) is a fixed point of the problem
\begin{equation*}\label{e4:34}
\Psi=\mathcal{T}_3(F[\phi_1, \phi_2, \Psi, \mu_{12}](x,t)),
\end{equation*}
$$
F[\phi_1, \phi_2, \Psi, \mu_{12}](x,t):=V\Psi + B[\vec{\phi}] + N(\phi, \Psi, \mu) + E^{out} -\eta_1\mu_1^{-4}c(t)Z(y_1).
$$
Here $\mathcal T_3$ is the solution operator given by Lemma \ref{bubbletowerouterlineartheory}.

From the above argument, we know that $(\phi, \Psi, \mu_{12})$ is a fixed point of the following problem,
\begin{equation}\label{inner_outer_gluing_system000}
\left\{
\begin{aligned}
&\mu_{12} = \mathcal T_1\left(G[\phi_1, \phi_2, \Psi, \mu_{12}](t)\right),\\
&\phi_1 = \mathcal{T}_2\left(H_1[\phi_1, \phi_2, \Psi, \mu_{12}](y,t(\tau))\right),\\
&\phi_2 = \mathcal{T}_2\left(H_2[\phi_1, \phi_2, \Psi, \mu_{12}](y,t(\tau))\right),\\
&\Psi=\mathcal{T}_3(F[\phi_1, \phi_2, \Psi, \mu_{12}](x,t)).
\end{aligned}
\right.
\end{equation}
Let us apply the Schauder fixed-point theorem in the following set
\begin{equation*}\label{convexsetforschauder}
\begin{aligned}
\mathcal{B} &= \Bigg\{(\phi_1, \phi_2, \Psi, \mu_{12}): \|\phi_1\|_{a, \sigma}^{(1)} + \|\phi_2\|_{a, \sigma}^{(2)} + \|\Psi\|_{*, a, \sigma} + \|\mu_{12}\|_{\sigma}^\sharp \leq c e^{-\varepsilon t_0}\Bigg\}
\end{aligned}
\end{equation*}
for a fixed positive constant $c$ large enough. Here $ \|\Psi\|_{*, a, \sigma}$ is the least $M > 0$ satisfying the following
\begin{equation*}
|\Psi|\leq M\frac{\mu_2^{\sigma}(t)}{\mu_1(t)^2}\langle y_2\rangle^{-a}.
\end{equation*}
Similarly, $\|\phi_1\|_{a, \sigma}^{(1)}$, $\|\phi_2\|_{a, \sigma}^{(2)}$ are the least $M > 0$ satisfying
\begin{equation*}
\langle y_1\rangle|\nabla_{y_1}\phi_1(y_1, t)|+|\phi_1(y_1, t)|\lesssim M\mu_2^{\sigma}(t)\langle y_2\rangle^{-a},
\end{equation*}
and
\begin{equation*}
\langle y_2\rangle|\nabla_{y_2}\phi_2(y_2, t)|+|\phi_2(y_2, t)|\lesssim M\mu_2^{\sigma}(t)\left(\frac{\mu_2(t)}{\mu_1(t)}\right)^{2}\langle y_2\rangle^{-a},
\end{equation*}
respectively. On the set $\mathcal{B}$, from the estimates in Section 6.4, we have
\begin{equation*}\label{e6:1}
\begin{aligned}
\left|G[\phi_1, \phi_2, \Psi, \mu_{12}]\right|\lesssim e^{-\varepsilon t_0}\frac{\mu_{02}^{1+\sigma}(t)}{\mu_{01}^2(t)},
\end{aligned}
\end{equation*}
\begin{equation*}\label{e6:1}
\begin{aligned}
\left|H_1[\phi_1, \phi_2, \Psi, \mu_{12}]\right|\lesssim e^{-\varepsilon t_0}\mu_2^{\sigma+a}(t)\mu_1^{-a}(t)\langle y_1\rangle^{-a-2},
\end{aligned}
\end{equation*}
\begin{equation*}\label{e6:1}
\begin{aligned}
\left|H_2[\phi_1, \phi_2, \Psi, \mu_{12}]\right|\lesssim e^{-\varepsilon t_0}\mu_2^{\sigma}(t)\left(\frac{\mu_2(t)}{\mu_1(t)}\right)^{2}\langle y_2\rangle^{-a-2},
\end{aligned}
\end{equation*}
\begin{equation*}\label{e6:1}
\begin{aligned}
\left|F[\phi_1, \phi_2, \Psi, \mu_{12}]\right|\lesssim e^{-\varepsilon t_0}\frac{\mu_2^{\sigma}(t)}{\mu_1(t)^2\mu_2(t)^2}\langle y_2\rangle^{-a-2}.
\end{aligned}
\end{equation*}
From Lemma Lemma \ref{bubbletowerinnerlineartheory}, Lemma \ref{bubbletowerouterlineartheory} and estimate (\ref{mu12}), the operator $\mathcal T$ defined in (\ref{inner_outer_gluing_system000}) maps $\mathcal{B}$ into $\mathcal{B}$ (the choice of the constant $c$ in the definition of $\mathcal{B}$ can be achieved by the same reason as in the proof of Theorem \ref{t:main}). Since $\phi_1$, $\phi_2$, $\Psi$, $\mu_{12}$ decay uniformly as $t\to +\infty$, standard parabolic estimate ensures that $\mathcal T$ defined in (\ref{inner_outer_gluing_system000}) is a compact operator. From the Schauder fixed-point theorem, there is a fixed point of $\mathcal T$ in the set $\mathcal{B}$. This provides a solution of (\ref{bubbletower111}) with form
$$
\bar \psi(r, t) = U_*(r, t)+\left(\frac{2}{r^2}-\frac{1}{\mu_2(t)^2}U\left(\frac{r}{\mu_2(t)}\right)\right)+\varphi_2(r, t).
$$
Observe that $\frac{2}{r^2}-\bar\psi(r, t)$ is also a solution of (\ref{bubbletower111}) and this solution has form
$$
\frac{2}{r^2}-\bar\psi(r, t) = -U_*(r, t)+\frac{1}{\mu_2(t)^2}U\left(\frac{r}{\mu_2(t)}\right)-\varphi_2(r, t),
$$
which is our desired solution.

\section*{Appendix: Some useful computations}
Here we give the detailed expressions for the terms $\tilde{\mathcal L}_i[B_{1, q}]$, $\tilde{\mathcal L}_i[\Phi_0]$, $\tilde{\mathcal L}_i[\Phi_1^{(1)}]$, $\tilde{\mathcal L}_i[\Phi_1^{(2)}]$, $\tilde{\mathcal L}_i[\Phi_1^{(3)}]$ in Section 2.4. First, we compute $\tilde{\mathcal L}[\varphi] = \sum_{i = 1}^4\tilde{\mathcal L}_i[\varphi]dx_i$ for the differential 1-form $\varphi$ we introduced before. By direct computations, for $B_{1, q}(x) = Im\left(\frac{x-q}{1+|x-q|^2}d\bar{x}\right)$, we have
\begin{equation*}
\begin{aligned}
\tilde{\mathcal L}_1[B_{1, q}] = \frac{24\mu^2}{(|x-q|^2+1)(|x-q|^2+\mu^2)^2}\left((x_2-q_2)i+(x_3-q_3)j+(x_4-q_4)k\right),
\end{aligned}
\end{equation*}

\begin{equation*}
\begin{aligned}
\tilde{\mathcal L}_2[B_{1, q}] = \frac{24\mu^2}{(|x-q|^2+1)(|x-q|^2+\mu^2)^2}\left(-(x_1-q_1)i-(x_4-q_4)j+(x_3-q_3)k\right),
\end{aligned}
\end{equation*}

\begin{equation*}
\begin{aligned}
\tilde{\mathcal L}_3[B_{1, q}] = \frac{24\mu^2}{(|x-q|^2+1)(|x-q|^2+\mu^2)^2}\left((x_4-q_4)i-(x_1-q_1)j-(x_2-q_2)k\right),
\end{aligned}
\end{equation*}

\begin{equation*}
\begin{aligned}
\tilde{\mathcal L}_4[B_{1, q}] = \frac{24\mu^2}{(|x-q|^2+1)(|x-q|^2+\mu^2)^2}\left(-(x_3-q_3)i+(x_2-q_2)j-(x_1-q_1)k\right).
\end{aligned}
\end{equation*}
Similarly, we have the following computations.

(1) For $$\phi =\Phi_0(x, t) = Im\left((x-\xi(t))\psi^{(0)}(z(\tilde{r}), t)d\bar{x}\right),$$ we have
\begin{equation*}
\begin{aligned}
\tilde{\mathcal L}_1[\phi] = \frac{24\mu^2 f_0\left(\sqrt{|x-\xi|^2+\mu^2(t)}\right)}{\left(|x-\xi|^2+\mu^2\right)^2}\left((x_2-\xi_2)i +  (x_3-\xi_3)j +  (x_4-\xi_4)k\right),
\end{aligned}
\end{equation*}

\begin{equation*}
\begin{aligned}
\tilde{\mathcal L}_2[\phi] = \frac{24\mu^2 f_0\left(\sqrt{|x-\xi|^2+\mu^2(t)}\right)}{\left(|x-\xi|^2+\mu^2\right)^2}\left(-(x_1-\xi_1)i -(x_4-\xi_4)j +  (x_3-\xi_3)k\right),
\end{aligned}
\end{equation*}

\begin{equation*}
\begin{aligned}
\tilde{\mathcal L}_3[\phi] = \frac{24\mu^2 f_0\left(\sqrt{|x-\xi|^2+\mu^2(t)}\right)}{\left(|x-\xi|^2+\mu^2\right)^2}\left((x_4-\xi_4)i - (x_1-\xi_1)j - (x_2-\xi_2)k\right),
\end{aligned}
\end{equation*}

\begin{equation*}
\begin{aligned}
\tilde{\mathcal L}_4[\phi] =  \frac{24\mu^2 f_0\left(\sqrt{|x-\xi|^2+\mu^2(t)}\right)}{\left(|x-\xi|^2+\mu^2\right)^2}\left(-(x_3-\xi_3)i +  (x_2-\xi_2)j - (x_1-\xi_1)k\right)
\end{aligned}
\end{equation*}
with
\begin{equation*}
\begin{aligned}
f_0\left(\sqrt{|x-\xi|^2+\mu^2(t)}\right)
& = \int_{t_0}^{t}2\mu(\tilde{s})\dot{\mu}(\tilde{s})k_1(t-\tilde{s}, z)d\tilde{s} \\
& = 2\int_{t_0}^{t}\mu(\tilde{s})\dot{\mu}(\tilde{s})\frac{1-e^{-\frac{|x-\xi|^2+\mu^2(\tilde{s})}{4(t-\tilde{s})}}(1+\frac{|x-\xi|^2+\mu^2(\tilde{s})}{4(t-\tilde{s})})}{(|x-\xi|^2+\mu^2(\tilde{s}))^2}d\tilde{s}\\
& = 2\int_{t_0}^{t}\frac{\mu(\tilde{s})\dot{\mu}(\tilde{s})}{(t-\tilde{s})^2}\Gamma\left(\frac{\mu(\tilde{s})^2}{t-\tilde{s}}\right)d\tilde{s}.
\end{aligned}
\end{equation*}
Here and in the following $\Gamma(\tau) = \frac{1-e^{-\tau\frac{\rho^2+1}{4}}(1+\tau\frac{\rho^2+1}{4})}{\tau^2(\rho^2+1)^2}$.

(2) For $$\phi = dx\wedge d\bar x\left(\psi^{(12)}(z, t)(x-\xi(t))_2\frac{\partial}{\partial x_2}+\psi^{(34)}(z, t)(x-\xi(t))_4\frac{\partial}{\partial x_3}, \cdot\right),$$ we have
\begin{equation*}
\begin{aligned}
\tilde{\mathcal L}_1[\phi] &= f_{1,1}(x, t)\left(-(x_1-\xi_1)i\right) + f_{1,2}(x, t)\left(-\left(x_4-\xi_4\right)j+(x_3-\xi_3)k\right),
\end{aligned}
\end{equation*}

\begin{equation*}
\begin{aligned}
\tilde{\mathcal L}_2[\phi] &= f_{1,1}(x, t)\left(-(x_2-\xi_2)i\right) + f_{1,2}(x, t)\left(-\left(x_3-\xi_3\right)j-\left(x_4-\xi_4\right)k\right),
\end{aligned}
\end{equation*}

\begin{equation*}
\begin{aligned}
\tilde{\mathcal L}_3[\phi] &= f_{1,1}(x, t)\left(-(x_3-\xi_3)i\right) + f_{1,2}(x, t)\left(\left(x_2-\xi_2\right)j-\left(x_1-\xi_1\right)k\right),
\end{aligned}
\end{equation*}

\begin{equation*}
\begin{aligned}
\tilde{\mathcal L}_4[\phi] &= f_{1,1}(x, t)\left(-(x_4-\xi_4)i\right) + f_{1,2}\left(\left(x_1-\xi_1\right)j+\left(x_2-\xi_2\right)k\right)
\end{aligned}
\end{equation*}
with
\begin{equation*}
\begin{aligned}
f_{1,1}(x, t)
& = \left(4\frac{2\left(2|x-\xi|^2-\mu^2(t)\right)\sqrt{|x-\xi|^2+\mu^2(t)}f_1\left(\sqrt{|x-\xi|^2+\mu^2(t)}\right)}{\left(|x-\xi|^2+\mu^2\right)^{5/2}}\right.\\
&\qquad\qquad\qquad\qquad\qquad\qquad\left. +4\frac{\left(|x-\xi|^2+\mu^2\right)^{2}f_1'\left(\sqrt{|x-\xi|^2+\mu^2(t)}\right)}{\left(|x-\xi|^2+\mu^2\right)^{5/2}}\right),
\end{aligned}
\end{equation*}
\begin{equation*}
\begin{aligned}
f_{1,2}(x, t)
=24\frac{\mu^2(t)f_1\left(\sqrt{|x-\xi|^2+\mu^2(t)}\right)}{\left(|x-\xi|^2+\mu^2\right)^{2}}
\end{aligned}
\end{equation*}
and
\begin{equation*}
\begin{aligned}
f_1\left(\sqrt{|x-\xi|^2+\mu^2(t)}\right)
& = \int_{t_0}^{t}\left(\dot \theta_{12}+\dot \theta_{34}\right)(\tilde{s})k_1(t-\tilde{s}, z)d\tilde{s} \\
& = \int_{t_0}^{t}\left(\dot \theta_{12}+\dot \theta_{34}\right)(\tilde{s})\frac{1-e^{-\frac{|x-\xi|^2+\mu^2(\tilde{s})}{4(t-\tilde{s})}}(1+\frac{|x-\xi|^2+\mu^2(\tilde{s})}{4(t-\tilde{s})})}{(|x-\xi|^2+\mu^2(\tilde{s}))^2}d\tilde{s}\\
& = \int_{t_0}^{t}\frac{\left(\dot \theta_{12}+\dot \theta_{34}\right)(\tilde{s})}{(t-\tilde{s})^2}\Gamma\left(\frac{\mu(\tilde{s})^2}{t-\tilde{s}}\right)d\tilde{s}.
\end{aligned}
\end{equation*}

(3) For $$\phi = dx\wedge d\bar x\left(\psi^{(13)}(z, t)(x-\xi(t))_2\frac{\partial}{\partial x_2}+\psi^{(24)}(z, t)(x-\xi(t))_4\frac{\partial}{\partial x_3}, \cdot\right),$$ we have
\begin{equation*}
\begin{aligned}
\tilde{\mathcal L}_1[\phi] = \frac{24\mu^2 f_2\left(\sqrt{|x-\xi|^2+\mu^2(t)}\right)}{\left(|x-\xi|^2+\mu^2\right)^2}\left(-(x_4-\xi_4)i +  (x_1-\xi_1)j +  (x_2-\xi_2)k\right),
\end{aligned}
\end{equation*}

\begin{equation*}
\begin{aligned}
\tilde{\mathcal L}_2[\phi] = \frac{24\mu^2 f_2\left(\sqrt{|x-\xi|^2+\mu^2(t)}\right)}{\left(|x-\xi|^2+\mu^2\right)^2}\left((x_3-\xi_3)i -(x_2-\xi_2)j +  (x_1-\xi_1)k\right),
\end{aligned}
\end{equation*}

\begin{equation*}
\begin{aligned}
\tilde{\mathcal L}_3[\phi] = \frac{24\mu^2 f_2\left(\sqrt{|x-\xi|^2+\mu^2(t)}\right)}{\left(|x-\xi|^2+\mu^2\right)^2}\left((x_2-\xi_2)i + (x_3-\xi_3)j + (x_4-\xi_4)k\right),
\end{aligned}
\end{equation*}

\begin{equation*}
\begin{aligned}
\tilde{\mathcal L}_4[\phi] =  \frac{24\mu^2 f_2\left(\sqrt{|x-\xi|^2+\mu^2(t)}\right)}{\left(|x-\xi|^2+\mu^2\right)^2}\left(-(x_1-\xi_1)i - (x_4-\xi_4)j + (x_3-\xi_3)k\right)
\end{aligned}
\end{equation*}
with
\begin{equation*}
\begin{aligned}
f_2\left(\sqrt{|x-\xi|^2+\mu^2(t)}\right)
& = \int_{t_0}^{t}\left(\dot \theta_{13}+\dot \theta_{24}\right)(\tilde{s})k_1(t-\tilde{s}, z)d\tilde{s} \\
& = \int_{t_0}^{t}\left(\dot \theta_{13}+\dot \theta_{24}\right)(\tilde{s})\frac{1-e^{-\frac{|x-\xi|^2+\mu^2(\tilde{s})}{4(t-\tilde{s})}}(1+\frac{|x-\xi|^2+\mu^2(\tilde{s})}{4(t-\tilde{s})})}{(|x-\xi|^2+\mu^2(\tilde{s}))^2}d\tilde{s}\\
& = \int_{t_0}^{t}\frac{\left(\dot \theta_{13}+\dot \theta_{24}\right)(\tilde{s})}{(t-\tilde{s})^2}\Gamma\left(\frac{\mu(\tilde{s})^2}{t-\tilde{s}}\right)d\tilde{s}.
\end{aligned}
\end{equation*}

(4) For $$\phi = dx\wedge d\bar x\left(\psi^{(14)}(z, t)(x-\xi(t))_2\frac{\partial}{\partial x_2}+\psi^{(23)}(z, t)(x-\xi(t))_4\frac{\partial}{\partial x_3}, \cdot\right),$$ we have
\begin{equation*}
\begin{aligned}
\tilde{\mathcal L}_1[\phi] =  f_{3,1}(x, t)\left(-\left(x_3-\xi_3\right)i+(x_2-\xi_2)j\right) +f_{3,2}(x, t)\left(-(x_1-\xi_1)k\right),
\end{aligned}
\end{equation*}

\begin{equation*}
\begin{aligned}
\tilde{\mathcal L}_2[\phi] =  f_{3,1}(x, t)\left(\left(x_4-\xi_4\right)i-(x_1-\xi_1)j\right) +f_{3,2}(x, t)\left(-(x_2-\xi_2)k\right),
\end{aligned}
\end{equation*}

\begin{equation*}
\begin{aligned}
\tilde{\mathcal L}_3[\phi] =  f_{3,1}(x, t)\left(\left(x_1-\xi_1\right)i+(x_4-\xi_4)j\right) +f_{3,2}(x, t)\left(-(x_3-\xi_3)k\right),
\end{aligned}
\end{equation*}

\begin{equation*}
\begin{aligned}
\tilde{\mathcal L}_4[\phi] =  f_{3,1}(x, t)\left(-\left(x_2-\xi_2\right)i-(x_3-\xi_3)j\right) +f_{3,2}(x, t)\left(-(x_4-\xi_4)k\right)
\end{aligned}
\end{equation*}
with
\begin{equation*}
\begin{aligned}
f_{3,1}(x, t)
& = 24\frac{\mu^2(t)f_3\left(\sqrt{|x-\xi|^2+\mu^2(t)}\right)}{\left(|x-\xi|^2+\mu^2\right)^{2}},
\end{aligned}
\end{equation*}
\begin{equation*}
\begin{aligned}
f_{3,2}(x, t)
&=\left(4\frac{2\left(2|x-\xi|^2-\mu^2(t)\right)\sqrt{|x-\xi|^2+\mu^2(t)}f_3\left(\sqrt{|x-\xi|^2+\mu^2(t)}\right)}{\left(|x-\xi|^2+\mu^2\right)^{5/2}}\right.\\
&\qquad\qquad\qquad\qquad\qquad\qquad\left. +4\frac{\left(|x-\xi|^2+\mu^2\right)^{2}f_3'\left(\sqrt{|x-\xi|^2+\mu^2(t)}\right)}{\left(|x-\xi|^2+\mu^2\right)^{5/2}}\right)
\end{aligned}
\end{equation*}
and
\begin{equation*}
\begin{aligned}
f_3\left(\sqrt{|x-\xi|^2+\mu^2(t)}\right)
& = \int_{t_0}^{t}\left(\dot \theta_{14}+\dot \theta_{23}\right)(\tilde{s})k_1(t-\tilde{s}, z)d\tilde{s} \\
& = \int_{t_0}^{t}\left(\dot \theta_{14}+\dot \theta_{23}\right)(\tilde{s})\frac{1-e^{-\frac{|x-\xi|^2+\mu^2(\tilde{s})}{4(t-\tilde{s})}}(1+\frac{|x-\xi|^2+\mu^2(\tilde{s})}{4(t-\tilde{s})})}{(|x-\xi|^2+\mu^2(\tilde{s}))^2}d\tilde{s}\\
& = \int_{t_0}^{t}\frac{\left(\dot \theta_{14}+\dot \theta_{23}\right)(\tilde{s})}{(t-\tilde{s})^2}\Gamma\left(\frac{\mu(\tilde{s})^2}{t-\tilde{s}}\right)d\tilde{s}.
\end{aligned}
\end{equation*}

\section*{Acknowledgements}
Y. Sire is partially supported by the Simons foundation and the NSF grant DMS-2154219. J.C. Wei is supported by National Key R\&D Program of China 2022YFA1005602, and Hong Kong General Research Fund “New frontiers in singularity formations of nonlinear partial differential
equations”. Y. Zheng is supported by NSF of China (No. 12171355) and by GuangDong Basic and Applied Basic Research Foundation (2023A1515011800).

\bibliographystyle{plain}

\end{document}